\title{Global behaviour of radially symmetric solutions stable at infinity for gradient systems}
\author{Emmanuel \textsc{Risler}}
\begin{document}
\maketitle
\begin{abstract}
This paper is concerned with radially symmetric solutions of parabolic gradient systems of the form
\[
u_t = -\nabla V(u) + \Delta_x u
\]
where the space variable $x$ and the state variable $u$ are multidimensional, and the potential $V$ is coercive at infinity. 
For such systems, under generic assumptions on the potential, the asymptotic behaviour of solutions \emph{stable at infinity}, that is approaching a stable homogeneous equilibrium as $\abs{x}$ goes to $+\infty$, is investigated. It is proved that every such solution approaches a pattern made of a stacked family of radially symmetric bistable fronts travelling to infinity, and around the origin a (possibly non-homogeneous) radially symmetric stationary solution. This behaviour is similar to that of bistable solutions for gradient systems in one unbounded spatial dimension, which is described in companion papers. 
\end{abstract}
\nnfootnote{%
\emph{2020 Mathematics Subject Classification:} 35B38, 35B40, 35K57.\\%
\emph{Key words and phrases:} parabolic gradient system, radially symmetric solution, solution stable at infinity, propagating terrace of bistable travelling fronts, global behaviour.
}
\thispagestyle{empty} 
\pagestyle{empty}
\hypersetup{pageanchor=false} 
\newpage
\tableofcontents
\newpage
\hypersetup{pageanchor=true} 
\pagestyle{plain}
\setcounter{page}{1}
\section{Introduction}
This paper deals with the global dynamics of radially symmetric solutions of nonlinear parabolic systems of the form
\begin{equation}
\label{syst_higher_dim}
\tilde{u}_t = -\nabla V(\tilde{u}) + \Delta_x \tilde{u}
\end{equation}
where the time variable $t$ is real, the space variable $x$ lies in the spatial domain $\rr^d$ with $d$ a positive integer, the function $(x,t)\mapsto \tilde{u}(x,t)$ takes its values in $\rr^{\dState}$ with $\dState$ a positive integer, and the nonlinearity is the gradient of a scalar \emph{potential} function $V:\rr^{\dState}\to\rr$, which is assumed to be regular (of class $\ccc^2$) and coercive at infinity (see hypothesis \cref{hyp_coerc} in \vref{subsec:coerc_glob_exist}).
\begin{notation}
As the previous sentence shows, the \emph{state} dimension is thus denoted by $\dState$ whereas the \emph{space} dimension is simply denoted by $d$. The reason for this choice (and for the absence of subscript in the notation for the space dimension) is that, by contrast with the state dimension, the space dimension is ubiquitous in the computations throughout the paper. 
\end{notation}
\emph{Radially symmetric} solutions of system \cref{syst_higher_dim} are functions of the form
\[
\tilde{u}(x,t) = u(r,t)
\,,
\quad\text{where } r=\abs{x} \text{ (the euclidean norm of } x \text{ in } \rr^d\text{),}
\]
and $(r,t)\mapsto u(r,t)$ is defined on $[0,+\infty)\times[0,+\infty)$ with values in $\rr^{\dState}$.
For such functions, system \cref{syst_higher_dim} takes the following form:
\begin{equation}
\label{syst_rad_sym}
u_t = -\nabla V(u) + \frac{d-1}{r} u_r + u_{rr}
\quad\text{with the boundary condition}\quad
\partial_r u(0,t) = 0
\,,
\end{equation}
and it this last system \cref{syst_rad_sym} that will be considered in this paper. 

A fundamental feature of each of systems~\cref{syst_higher_dim,syst_rad_sym} is that they can be recast, at least formally, as gradient flows of energy functionals. If $(w,w')$ is a pair of vectors of $\rr^{\dState}$, let $w\cdot w'$ and $\abs{w} =\sqrt{w\cdot w}$ denote the usual Euclidean scalar product and the usual Euclidean norm, respectively, and let us simply write $w^2$ for $\abs{w}^2$. 

For every function $x\mapsto \tilde{v}(x)$ defined on $\rr^d$ with values in $\rr^{\dState}$, its \emph{energy} (or \emph{Lagrangian} or \emph{action}) with respect to system \cref{syst_higher_dim} is defined (at least formally) by 
\[
\tilde{\eee}[\tilde{v}] = \int_{\rr^d}  \biggl(\frac{1}{2}\abs{\nabla_x \tilde{v}(x)}^2+V\bigl(\tilde{v}(x)\bigr)\biggr)\, dx
\]
where
\[
\abs{\nabla_x \tilde{v}(x)}^2 = \sum_{i=1}^d \sum_{j=1}^{\dState} \bigl(\partial_{x_i}\tilde{v}_j(x)\bigr)^2
\,.
\]
Similarly, for every function $r\mapsto v(r)$ defined on $[0,+\infty)$ with values in $\rr^{\dState}$, its \emph{energy} (or \emph{Lagrangian} or \emph{action}) with respect to system \cref{syst_rad_sym} is defined (at least formally) by 
\begin{equation}
\label{form_en}
\eee[v] = \int_0^{+\infty} r^{d-1} \Bigl(\frac{1}{2}v_r(r)^2+V\bigl(v(r)\bigr)\Bigr)\, dr
\,.
\end{equation}
Note that if $\tilde{v}:\rr^d\to\rr$ denotes a (radially symmetric) function defined as $\tilde{v}(x) = v(\abs{x})$, and if $S_{d-1}$ denotes the surface area of the $d-1$-unit-sphere in $\rr^d$, then 
\begin{equation}
\label{factor_D_d_minus_one_in_definition_of_energy}
\tilde{\eee}[\tilde{v}] = S_{d-1} \, \eee[v]
\,.
\end{equation}
Formally, the differential of the functional $\eee$ defined by \cref{form_en} reads (skipping border terms in the integration by parts):
\[
\begin{aligned}
d\eee[v] \cdot \delta v &= \int_0^{+\infty}  r^{d-1}\bigl( v_r \cdot (\delta v)_r + \nabla V(v) \cdot \delta v \bigr) \, dr \\
&= \int_0^{+\infty}  r^{d-1}\Bigl( - v_{rr} - \frac{d-1}{r} v_r + \nabla V(v)  \Bigr) \cdot \delta v \, dr
\,.
\end{aligned}
\]
In other words, the (formal) gradient of this functional with respect to the $L^2$-scalar product with weight $r^{d-1}$ on functions $[0,+\infty)\to\rr^{\dState}$ reads: 
\[
\nabla_\text{rad}\eee[v] = - v_{rr} - \frac{d-1}{r} v_r + \nabla V(v)
\,,
\]
thus system \cref{syst_rad_sym} can formally be rewritten under the form: 
\[
u_t = - \nabla_\text{rad}\eee[u(\cdot,t)]
\,,
\]
and if $(r,t)\mapsto u(r,t)$ is a solution of this system, then (formally)
\[
\frac{d}{d t}\eee[u(\cdot,t)] = -\int_0^{+\infty} r^{d-1} u_t(r,t)^2\, dr \le 0
\,.
\]

An additional and related feature of system \cref{syst_higher_dim} is that a formal gradient structure exists not only in the laboratory frame, but also in every frame travelling at a constant velocity, \cite{Risler_noInvasionCaseHigherSpace_2020}. What about radially symmetric solutions of system \cref{syst_rad_sym} with respect to the radial coordinate $r$? Let us see this now. 

For every nonnegative quantity $c$, if $(r,t)\mapsto u(r,t)$ and $(\rho,t)\mapsto v(\rho,t)$ are two functions related by 
\[
u(r,t) = v(\rho,t)
\quad\text{for}\quad
r = ct + \rho
\,,
\]
then $u$ is a solution of \cref{syst_rad_sym} if and only if $v$ is a solution of
\begin{equation}
\label{syst_tf}
v_t-c \cdot v_\rho = -\nabla V(v)+\frac{d-1}{ct+\rho}v_\rho + v_{\rho\rho}
\,.
\end{equation}
Now, for every function $w:\rho\mapsto w(\rho)$ defined on $[-ct,+\infty)$ with values in $\rr^{\dState}$, its energy functional with respect to system \cref{syst_tf} may be defined, at least formally, as
\begin{equation}
\label{form_en_tf}
\eee_{c,t}[w]=\int_{-ct}^{+\infty} (ct+\rho)^{d-1}e^{c\rho} \Bigl( \frac{1}{2}w_\rho(\rho)^2 + V\bigl(w(\rho)\bigr) \Bigr)\, d\rho
\,.
\end{equation}
Formally, the differential of this functional reads (skipping border terms in the integration by parts)
\[
\begin{aligned}
d\eee_{c,t}[w] \cdot \delta w &= \int_{-ct}^{+\infty} (ct+\rho)^{d-1}e^{c\rho} \bigl( w_\rho\cdot\delta w_\rho + \nabla V(w)\cdot\delta w \bigr)\, d\rho \\
&= \int_{-ct}^{+\infty} (ct+\rho)^{d-1}e^{c\rho} \Bigl( - w_{\rho\rho} - c w_\rho - \frac{d-1}{ct+\rho} w_\rho + \nabla V(w) \Bigr) \cdot \delta w \, d\rho 
\,.
\end{aligned}
\]
In other words, the (formal) gradient of this functional with respect to the $L^2$-scalar product with weight $(ct+\rho)^{d-1}e^{c\rho}$ on functions $[-ct,+\infty)\to\rr^{\dState}$ reads:
\[
\nabla_{\text{rad}, c} \eee_{c,t}[w] = - w_{\rho\rho} - c w_\rho - \frac{d-1}{ct+\rho} w_\rho + \nabla V(w)
\,,
\]
and system \cref{syst_tf} can formally be rewritten under the form:
\begin{equation}
\label{form_grad_tf}
v_t = - \nabla_{\text{rad}, c} \eee_{c,t}[v(\cdot,t)]
\,.
\end{equation}
Now, if $(\rho,t)\mapsto v(\rho,t)$ is a solution of system \cref{syst_tf}, then (formally):
\begin{equation}
\label{dt_form_en_tf}
\frac{d}{d t}\eee_{c,t}[v(\cdot,t)] = \int_{-ct}^{+\infty} (ct+\rho)^{d-1}e^{c\rho} \Biggl[ -v_t(\rho,t)^2 + \frac{c(d-1)}{ct+\rho}\Bigl(\frac{1}{2}v_\rho^2 + V(v) \Bigr)\Biggr]\, d\rho 
\,.
\end{equation}
What can be seen from these calculations is that, although system \cref{syst_tf} is still formally gradient, the time derivative of the energy \cref{form_en_tf} involves, in addition to the standard dissipation term $-v_t^2$, an additional term, that can be related to the time dependence of the $L^2$-scalar product defining the gradient $\nabla_{\text{rad}, c}$, or viewed as induced by the curvature $1/r=1/ct+\rho$. Only in the limit of large radii (or large positive times, or small curvature) is the expression of the time derivative of energy always nonnegative. In short, the picture is not hopeless, but not as nice as it would be in space dimension $1$.

This gradient structure (``asymptotic'' gradient structure in the case of system \cref{syst_tf}) has been known for a long time \cite{FifeMcLeod_approachTravFront_1977}, but it is only more recently that it received a more detailed attention from several authors (among which S. Heinze, C. B. Muratov, Th. Gallay, R. Joly, and the author \cite{Heinze_variationalApproachTW_2001,Muratov_globVarStructPropagation_2004,GallayRisler_globStabBistableTW_2007,Risler_globCVTravFronts_2008,GallayJoly_globStabDampedWaveBistable_2009}), and that is was shown that this structure is sufficient (in itself, that is without the use of the maximum principle) to prove results of global convergence towards travelling fronts. These ideas have been applied since in different contexts, to prove either global convergence or just existence results, see for instance \cite{Chapuisat_existenceCurvedFront_2007,ChapuisatJoly_asymptProfilesTravFrontBiolEqu_2010,MuratovNovaga_frontPropIVariational_2008,MuratovNovaga_frontPropIISharpReaction_2008,MuratovNovaga_globExpConvTW_2012,AlikakosKatzourakis_heteroclinicTW_2011,AlikakosFusco_ellipticSystemsPhaseTransType_2018,Luo_globStabDampedWaveEqu_2013,BouhoursNadin_variationalApproachRDForcedSpeedDim1_2015,BouhoursGiletti_extinctSpreadClimateAllee_2016,BouhoursGiletti_spreadVanishMonStabRDEqu_2018,OliverBonafoux_heteroclinicTW1dParabSystDegenerate_2021,OliverBonafoux_TWparabAllenCahn_2021,ChenChienHuang_varApproach3PhaseTWgradSyst_2021,ChenCotiZelati_TWSolAllenCahnEqu_2022,OliverBonafouxRisler_globCVPushedTravFronts_2023}. 

Even more recently, the same ideas enabled the author (\cite{Risler_globalRelaxation_2016,Risler_globalBehaviour_2016}) to push one step further (that is, extend to systems) the program initiated by P. C. Fife and J. McLeod in the late seventies with the aim of describing the global asymptotic behaviour (when space is one-dimensional) of every \emph{bistable} solution, that is every solution close to stable homogeneous equilibria at both ends of space (\cite{FifeMcLeod_approachTravFront_1977,FifeMcLeod_phasePlaneDisc_1981,Fife_longTimeBistable_1979}). Under generic assumptions on the potential $V$, these solutions approach a stacked (possibly empty) family of bistable travelling fronts at both ends of space, and approach in between a pattern of stationary solutions going slowly away from one another. These stacked families will be called \emph{terraces} (see \cref{subsubsec:def_prop_terrace} for comments and references on this terminology and a precise definition in the framework of this paper). 

The aim of this paper is to extend to the case of radially symmetric solutions in higher space dimensions the results (description of the global asymptotic behaviour) obtained in \cite{Risler_globalRelaxation_2016,Risler_globalBehaviour_2016} for bistable solutions when spatial domain is one-dimensional. Thus, the solutions that will be considered are solutions of system \cref{syst_rad_sym} that approach a stable homogeneous equilibrium as the radius $r$ goes to $+\infty$ (or equivalently radially symmetric solutions of system \cref{syst_higher_dim} that are stable at infinity, in space). The goal is to prove that, under generic assumptions on the potential, every such solution approaches a pattern made of a stacked family of (radially symmetric) bistable front going to infinity (a ``propagating terrace''), and around the origin a radially symmetric stationary solution (which may or not be spatially homogeneous).

In the scalar case $\dState$ equals $1$, the behaviour of solutions stable at infinity of reaction-diffusion equations in higher space dimension is the subject of a large amount of literature. For extinction/invasion (threshold) results in relation with the initial condition and the reaction term see for instance \cite{AronsonWeinberger_multidimNonlinDiffPopGen_1978,DuMatano_cvSharpThresholdNonlinDiff_2010,MuratovZhong_thresholdSymSol_2013,MuratovZhong_thresholdPhenSymDecrRadialSolRDEquations_2017,Zlatos_sharpTransitionExtinctPropag_2005}, for local convergence and quasi-convergence results see for instance \cite{DuMatano_cvSharpThresholdNonlinDiff_2010,DuPolacik_locUnifCVNLparabEquRN_2015,MatanoPolacik_dynNonnegSolOneDimRDI_2016,MatanoPolacik_dynNonnegSolOneDimRDII_2020,Polacik_cvQuasiCvSolParabEquRoverview_2017,HamelRossi_spreadSpeedsOneDimSymRDEqu_2021}, and for further estimates on the location and shape at large positive times of the level sets see for instance \cite{Jones_sphericallySymmetricSolutionsRDEquation_1983,Jones_asymptBehavRDEquHigherSpaceDim_1983,Uchiyama_asymptBehavSolRDEquaVaryingDrift_1985,RoussierMichon_stabilityRadiallySymmTWRDEqu_2004,RoquejoffreRoussier_sharpLargeTimeBehavNdimBistable_2021,HamelRossi_spreadSpeedsOneDimSymRDEqu_2021}. Recently, a result of global convergence towards a radial terrace of travelling fronts was proved by Y. Du and H. Matano \cite{DuMatano_radialTerraceSolutionsPropProfileRN_2020} (without any radial symmetry assumption on the solutions), and a rather complementary result of convergence/quasi-convergence (in $L^\infty_\text{loc}(\rr^{\dState},\rr)$) was proved by P. Poláčik \cite{Polacik_bdedRadialSolParabEquQuasiconvStableInfinity_2023} under very weak non-degeneracy assumptions on the nonlinearity; see also \cite{MatanoMoriNara_asymptBehavFrontsAnisotropic_2019,MatsuzawaNara_asymptBehavFrontsAnisotropic_2022} for results similar to those of \cite{DuMatano_radialTerraceSolutionsPropProfileRN_2020} when space is anisotropic. The present paper extends some of those results (in particular some of the results of \cite{DuMatano_radialTerraceSolutionsPropProfileRN_2020,Polacik_bdedRadialSolParabEquQuasiconvStableInfinity_2023}) to the more general setting of systems, but for radially symmetric solutions only. 

The path of the proof is very similar to the one used in the spatial dimension one case \cite{Risler_globalRelaxation_2016,Risler_globalBehaviour_2016}. It is based on a careful study of the relaxation properties of energy or $L^2$ functionals (localized in space by adequate weight functions), both in the laboratory frame and in frames travelling at various speeds. The differences are mainly of technical nature due to specific features of the (reduced) system \cref{syst_rad_sym}:
\begin{itemize}
\item the ``curvature'' term $(d-1)u_r/r$; 
\item the fact that space is reduced to the half-line $[0,+\infty)$ (thus is in this sense less ``spatially homogeneous'' than the full real line); 
\item the ``no invasion implies relaxation'' part of the argument, which does not call upon the radial symmetry and is therefore processed in the companion paper \cite{Risler_noInvasionCaseHigherSpace_2020};
\item the convergence behind the terrace of travelling front, which differs from the space dimension one case both regarding the arguments and the result (roughly speaking, again due to the curvature term).
\end{itemize}
\section{Assumptions, notation, and statement of the results}
This section presents strong similarities with \cite[\GlobalBehaviourSecAssumptionsNotationStatement]{Risler_globalBehaviour_2016} and \cite[\GlobalRelaxationSecAssumptionsNotationStatement]{Risler_globalRelaxation_2016}, where more details and comments can be found.

For the remaining of the paper it will be assumed than the space dimension $d$ is not smaller than $2$. Indeed the case $d=1$ was already treated in \cite{Risler_globalRelaxation_2016,Risler_globalBehaviour_2016}, and several definitions, estimates, and statements will turn out to be irrelevant without this assumption (see for instance the definition of the weight function $T_\rho\psi_0$ in \vref{subsubsec:def_fire_zero}). 
\subsection{Semi-flow and coercivity hypothesis}
\label{subsec:coerc_glob_exist}
Let us consider the following two Banach spaces of continuous and uniformly bounded functions equipped with the uniform norm: 
\[
\begin{aligned}
X &= \bigl(\CbRd,\norm{\dots}_{L^\infty(\rr^d,\rr^{\dState})}\bigr) \,, \\
\text{and}\quad
Y &= \Bigl(\cccb{0},\norm{\dots}_{L^\infty\bigl([0,+\infty),\rr^{\dState}\bigr)}\Bigr)
\,.
\end{aligned}
\]
System \cref{syst_higher_dim} defines a local semi-flow in $X$ (see for instance D. B. Henry's book \cite{Henry_geomSemilinParab_1981}). 

As in \cite{Risler_globalRelaxation_2016,Risler_globalBehaviour_2016}, let us assume that the potential function $V:\rr^{\dState}\to\rr$ is of class $\ccc^2$ and that this potential function is strictly coercive at infinity in the following sense: 
\begin{gather}
\tag{$\text{H}_\text{coerc}$}
\lim_{R\to+\infty}\quad  \inf_{\abs{u}\ge R}\ \frac{u\cdot \nabla V(u)}{\abs{u}^2} >0
\label{hyp_coerc}
\end{gather}
(or in other words there exists a positive quantity $\varepsilon$ such that the quantity $u\cdot \nabla V(u)$ is greater than or equal to $\varepsilon\abs{u}^2$ as soon as $\abs{u}$ is large enough). 

According to this hypothesis \cref{hyp_coerc}, the semi-flow of system \cref{syst_higher_dim} on $X$ is actually global (see \vref{prop:attr_ball}). As a consequence, considering the restriction of this semi-flow to radially symmetric functions, it follows that system \cref{syst_rad_sym} defines a global semi-flow on $Y$. Let us denote by $(S_t)_{t\ge0}$ this last semi-flow on $Y$. 

In the following, a \emph{solution of system \cref{syst_rad_sym}} will refer to a function 
\[
[0,+\infty)\times[0,+\infty)\to\rr^{\dState}\,, \quad (r,t)\mapsto u(r,t)
\,,
\]
such that the function $u_0:r\mapsto u(r,t=0)$ (initial condition) is in $Y$ and $u(\cdot,t)$ equals $(S_t u_0)(\cdot)$ for every nonnegative time $t$. 
\subsection{Minimum points and solutions stable at infinity}
\subsubsection{Minimum points}
Everywhere in this paper, the term ``minimum point'' denotes a point where a function --- namely the potential $V$ --- reaches a local \emph{or} global minimum. Let $\mmm$ denote the set of \emph{nondegenerate} minimum points:
\[
\mmm=\{u\in\rr^{\dState}: \nabla V(u)=0 
\quad\text{and}\quad 
D^2V(u)\text{ is positive definite}\}
\,.
\]
\subsubsection{Solutions stable at infinity}
\begin{definition}[solution stable at infinity]
A solution $(r,t)\mapsto u(r,t)$ of system \cref{syst_rad_sym} is said to be \emph{stable at infinity} if there exists a point $m$ in $\mmm$ such that
\[
\limsup_{r\to+\infty} \abs{(u(r,t)-m}\to 0
\quad\text{as}\quad
t\to+\infty
\,.
\]
More precisely, such a solution is said to be \emph{stable close to $m$ at infinity}. A function (initial condition) $u_0$ in $Y$ is said to be \emph{stable (close to $m$) at infinity} if the solution of system \cref{syst_rad_sym} corresponding to this initial condition is stable (close to $m$) at infinity. 
\end{definition}
\begin{notation}
Let 
\[
\YstabInfty(m)
\]
denote the subset of $Y$ made of initial conditions that are stable close to $m$ at infinity. 
\end{notation}
\subsection{Stationary solutions, travelling fronts, terraces, and asymptotic pattern}
\subsubsection{Radially symmetric stationary solutions}
A function
\[
\phi:[0,+\infty)\to\rr^{\dState},
\quad r\mapsto\phi(r)
\]
is a stationary solution of system \cref{syst_rad_sym} if $\phi$ is a solution of the differential system
\begin{equation}
\label{syst_rad_sym_stationary}
\phi'' + \frac{d-1}{r}\phi' = \nabla V(\phi)
\,,
\quad\text{with the boundary condition}\quad
\phi'(0) = 0
\,.
\end{equation}
\begin{notation}
If $m$ is in $\mmm$, let $\PhiZeroCentre(m)$ denote the set of solutions of system \cref{syst_rad_sym_stationary} approaching $m$ at infinity. With symbols, 
\begin{equation}
\label{definition_PhiZeroCentre_of_m}
\begin{aligned}
\PhiZeroCentre(m) = \bigl\{ &
\phi:[0,+\infty)\to\rr^{\dState} : \phi \text{ is a solution of system \cref{syst_rad_sym_stationary}}
\\
& \text{and}\quad\phi(r)\xrightarrow[r\to +\infty]{} m \bigr\}
\,.
\end{aligned}
\end{equation}
In this notation, 
\begin{itemize}
\item the index ``$0$'' refers to the ``zero speed'' of these solutions, by contrast with the nonzero speed of the travelling fronts considered below, 
\item the symbols $\Phi$ and $\phi$ have been chosen for homogeneity with the notation introduced below for travelling fronts, 
\item and the index ``centre'' refers to the fact that these solutions are stationary for system \cref{syst_rad_sym} (with the curvature term) and not for system \cref{syst_rad_sym_large_radius} introduced below. 
\end{itemize}
This set $\PhiZeroCentre(m)$ comprises the constant solution $\phi\equiv m$, by contrast with the sets introduced in the next two \cref{subsubsec:trav_fronts,subsubsec:breakup}. 
\end{notation}
A function $\phi$ belonging to $\PhiZeroCentre(m)$ for some $m$ in $\mmm$ is said to be \emph{stable at infinity}.
\begin{definition}[energy of a stationary solution stable at infinity]
If $m$ is a point in $\mmm$ and $\phi$ is a function in $\PhiZeroCentre(m)$, let us call \emph{energy of $\phi$}, and let us denote by $\eee[\phi]$, the quantity 
\[
\eee[\phi] = \int_0^{+\infty} r^{d-1}\Bigl(\frac{1}{2}\phi'(r)^2 + V\bigl(\phi(r)\bigr)-V(m)\Bigr)\, dr
\,.
\]
Since $\phi(r)$ goes to $m$ at an exponential rate as $r$ goes to $+\infty$, this integral converges. 
\end{definition}
It follows from Pokhozhaev's identity (\cite{Pokhozhaev_onEigenfunctionsDeltauPlusLambdaf_1965,BerestyckiLions_existenceGroundState_1983}) that
\begin{equation}
\label{Pokhozhaev}
\eee[\phi] = \frac{1}{d}\int_0^{+\infty} r^{d-1}\phi'(r)^2\, dr
\,,
\end{equation}
and this shows that $\eee[\Phi]$ is nonnegative (and even positive if $\phi$ is not identically equal to $m$). It turns out that the results of the companion paper \cite{Risler_noInvasionCaseHigherSpace_2020} provide another justification of the nonnegativity of this energy (see conclusion \cref{item:prop_no_invasion_implies_relaxation_asympt_energy} of \cref{prop:no_invasion_implies_relaxation}).
\begin{remark}
Let us denote by $S_{d-1}$ the surface area of the $d-1$-unit sphere in $\rr^d$, and let us introduce the function $\tilde\phi:\rr^d\to\rr$ defined as $\tilde{\phi}(x)=\phi(\abs{x})$; then, 
\[
S_{d-1} \,\eee[\phi] = \int_{\rr^d}\Bigl(\frac{1}{2}\abs{\nabla\tilde{\phi}(x)}^2 + V\bigl(\tilde{\phi}(x)\bigr)-V(m)\Bigr)\, dx
\]
(compare with equality \cref{factor_D_d_minus_one_in_definition_of_energy} in introduction). 
\end{remark}
\subsubsection{Large radius asymptotic form of the system governing radially symmetric solutions}
When the radius $r$ goes to $+\infty$, system \cref{syst_rad_sym} governing radially symmetric solutions takes the following asymptotic form:
\begin{equation}
\label{syst_rad_sym_large_radius}
u_t = -\nabla V(u) + u_{rr}
\,,
\end{equation}
on functions $(r,t)\mapsto u(r,t)$ defined on $\rr\times[0,+\infty)$ (here the radius $r$ is defined on the whole real line), with values in $\rr^{\dState}$. 
\subsubsection{Radially symmetric travelling fronts for the large radius limit}
\label{subsubsec:trav_fronts}
Let $c$ be a positive quantity. A function
\[
\phi:\rr\to\rr^{\dState},
\quad \rho\mapsto\phi(\rho)
\]
is the profile of a wave travelling at the speed $c$ for system \cref{syst_rad_sym_large_radius} if the function  $(r,t)\mapsto \phi(r-ct)$ is a solution of this system, that is if $\phi$ is a solution of the differential system
\begin{equation}
\label{syst_trav_front}
\phi''=-c\phi'+\nabla V(\phi) 
\,.
\end{equation}
\begin{notation}
If $m_-$ and $m_+$ are two points of $\mmm$ and $c$ is a positive quantity, let $\Phi_c(m_-,m_+)$ denote the set of \emph{nonconstant} global solutions of system \cref{syst_trav_front} connecting $m_-$ to $m_+$. With symbols, 
\[
\begin{aligned}
\Phi_c(m_-,m_+) = \bigl\{ &
\phi:\rr\to\rr^{\dState} : \phi \text{ is a \emph{nonconstant} global solution of system \cref{syst_trav_front}}
\\
& \text{and}\quad\phi(\rho)\xrightarrow[\rho\to -\infty]{} m_-
\quad\text{and}\quad
\phi(\rho)\xrightarrow[\rho\to +\infty]{} m_+
\bigr\}
\,.
\end{aligned}
\]
\end{notation}
If $\phi$ is an element of some set $\Phi_c(m_-,m_+)$ for some positive quantity $c$, then it follows from system \cref{syst_trav_front} that
\begin{equation}
\label{V_of_u_plus_minus_V_of_u_minus}
V(m_+)-V(m_-) = c \int_{\rr}\phi'(\xi)^2 \, d\xi
\,,
\end{equation}
so that $V(m_-)$ is less than $V(m_+)$ and $m_-$ and $m_+$ differ; in this case the function $\phi$ is thus the profile of a travelling front. Since its asymptotic values $m_-$ and $m_+$ belong to $\mmm$, this front is qualified as \emph{bistable}. 
\subsubsection{Propagating terraces of bistable travelling fronts}
\label{subsubsec:def_prop_terrace}
\begin{figure}[!htbp]
\centering
\includegraphics[width=.8\textwidth]{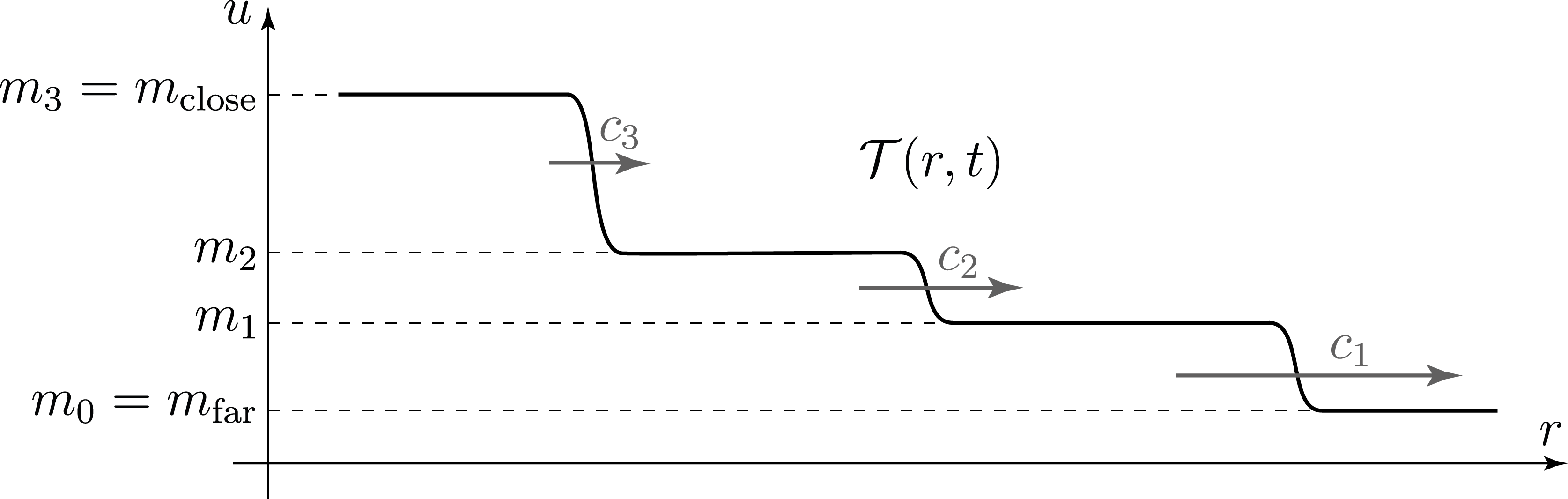}
\caption{Propagating terrace of (bistable) fronts (travelling to the right).}
\label{fig:prop_terrace}
\end{figure}
\begin{definition}[propagating terrace of bistable travelling fronts, \cref{fig:prop_terrace}]
\label{def:propagating_terrace}
Let $\mClose$ and $\mFar$ be two points of $\mmm$ (satisfying $V(\mClose)\le V(\mFar)$). A function 
\[
\ttt : \rr\times[0,+\infty)\to\rr^{\dState},\quad (r,t)\mapsto \ttt(r,t)
\]
is called a \emph{propagating terrace of bistable fronts travelling to the right, connecting $\mClose$ to $\mFar$,} if there exists a nonnegative integer $q$ such that:
\begin{enumerate}
\item if $q$ equals $0$, then $\mClose=\mFar$ and, for every real quantity $r$ and every nonnegative time $t$, 
\[
\ttt(r,t)=\mClose=\mFar
\,;
\]
\item if $q$ equals $1$, then there exist
\begin{itemize}
\item a positive quantity $c_1$
\item and a function $\phi_1$ in $\Phi_c(\mClose,\mFar)$ (that is, the profile of a bistable front travelling at the speed $c_1$ and connecting $\mClose$ to $\mFar$)
\item and a $\ccc^1$-function $[0,+\infty)\to[0,+\infty)$, $t\mapsto r_1(t)$, satisfying $r_1'(t)\to c_1$ as time goes to $+\infty$
\end{itemize}
such that, for every real quantity $r$ and every nonnegative time $t$, 
\[
\ttt(r,t)=\phi_1\bigl(r-r_1(t)\bigr)
\,;
\]
\label{item:def_propagating_terrace_q_equals_one}
\item if $q$ is not smaller than $2$, then there exists $q-1$  points $m_1,\dots,m_{q-1}$ of $\mmm$, satisfying (if $\mFar$ is denoted by $m_0$ and $\mClose$ by $m_q$)
\[
V(m_0)>V(m_1)>\dots>V(m_q)
\,,
\]
and there exist $q$ positive quantities $c_1$, …, $c_q$ satisfying:
\[
c_1\ge\dots\ge c_q
\,,
\]
and for every integer $i$ in $\{1,\dots,q\}$, there exist:
\begin{itemize}
\item a function $\phi_i$ in $\Phi_{c_i}(m_i,m_{i-1})$ (that is, the profile of a bistable front travelling at the speed $c_i$ and connecting $m_i$ to $m_{i-1}$)
\item and a $\ccc^1$-function $[0,+\infty)\to[0,+\infty)$, $t\mapsto r_i(t)$, satisfying $r_i'(t)\to c_i$ as time goes to $+\infty$
\end{itemize}
such that, for every integer $i$ in $\{1,\dots,q-1\}$, 
\[
r_{i+1}(t)-r_i(t)\to +\infty 
\quad\text{as}\quad
t\to +\infty
\,,
\]
and such that, for every real quantity $r$ and every nonnegative time $t$, 
\[
\ttt(r,t) = m_0 + \sum_{i=1}^q \Bigl[\phi_i\bigl(r-r_i(t)\bigr)-m_{i-1}\Bigr]
\,.
\]
\label{item:def_propagating_terrace_q_larger_than_one}
\end{enumerate}
\end{definition}
\begin{remarks}
\begin{enumerate}
\item Item \cref{item:def_propagating_terrace_q_equals_one} may have been omitted in this definition, since it boils down to item \cref{item:def_propagating_terrace_q_larger_than_one} with $q$ equals $1$. 
\item It would be interesting to investigate whether \cref{thm:main} (the main result of this paper, stated below) still holds with more refined estimates on the positions of the travelling fronts involved in \cref{def:propagating_terrace} above. In particular, beyond the convergence ``$r_i'(t)\to c_i$'' stated in this definition, taking into account the curvature term in the differential system \cref{syst_rad_sym} should lead to asymptotics of the form: 
\[
r_i(t) = c_i t - \frac{d-1}{c_i}\log(t) +\dots
\,,
\]
see for instance \cite[Theorem 1.1]{DuMatano_radialTerraceSolutionsPropProfileRN_2020} in the scalar case $\dState$ equals $1$. 
\end{enumerate}
\end{remarks}
The terminology ``propagating terrace'' was introduced by A. Ducrot, T. Giletti, and H. Matano in \cite{DucrotGiletti_existenceConvergencePropagatingTerrace_2014} (and subsequently used by several other authors \cite{Polacik_propagatingTerracesAsymptOneDimSym_2017,Polacik_propTerracesProofGibbonsConj_2016,GilettiRossi_pulsatingSolMultBistMultiStab_2019,MatanoPolacik_dynNonnegSolOneDimRDII_2020,Polacik_propagatingTerracesDynFrontLikeSolRDEquationsR_2020,GilettiMatano_existenceUniquenessPropTerr_2020,PauthierRademacherU_WeakStrongInteractKinks_2021}) to denote a stacked family (a layer) of travelling fronts in a (scalar) reaction-diffusion equation. This led the author to keep the same terminology in the present context. This terminology is convenient to denote objects that would otherwise require a long description. It is also used in the companion papers \cite{Risler_globalRelaxation_2016,Risler_globalBehaviour_2016,Risler_globalBehaviourHyperbolicGradient_2017}. Additional comments on this terminological choice are provided in \cite{Risler_globalBehaviour_2016}.
\subsubsection{Asymptotic pattern stable at infinity}
\label{subsubsec:def_asympt_patt}
\begin{figure}[!htbp]
\centering
\includegraphics[width=.6\textwidth]{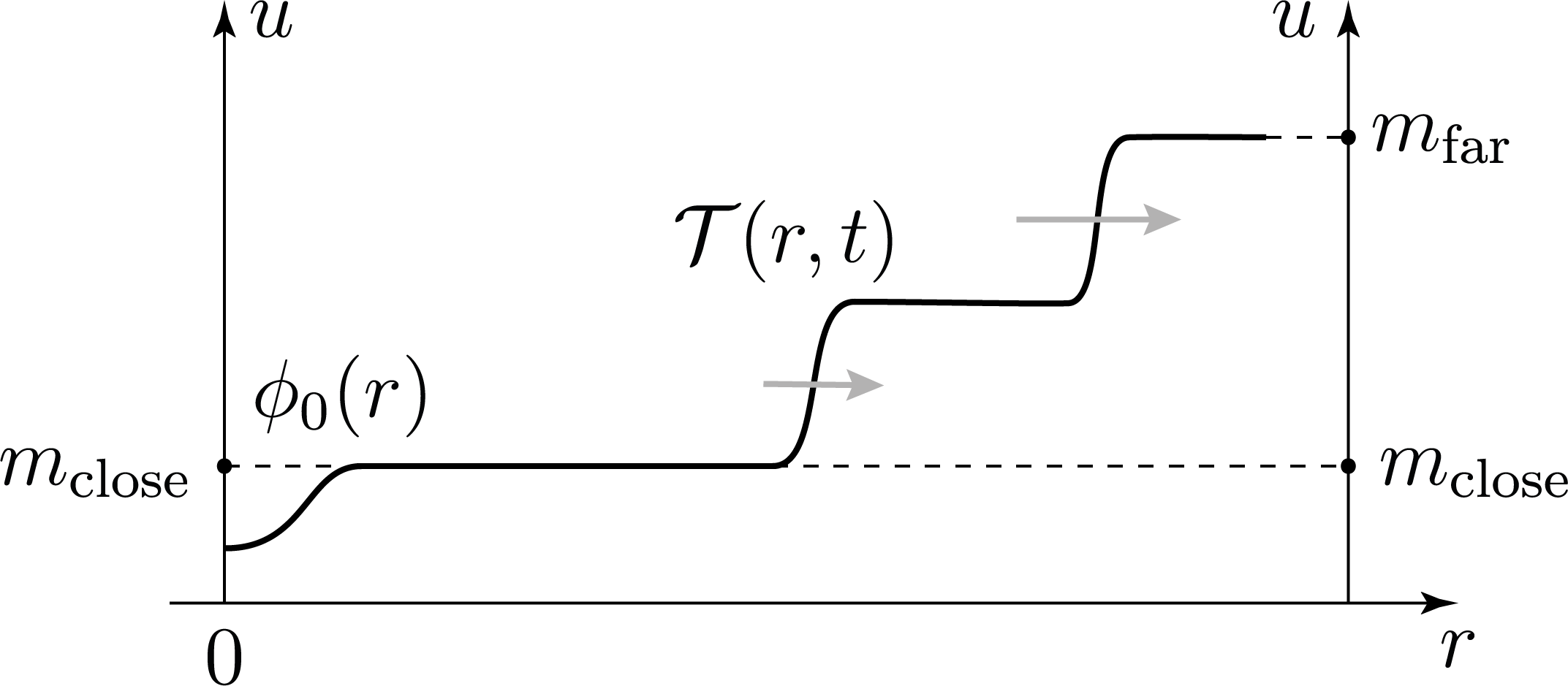}
\caption{Asymptotic pattern stable at infinity.}
\label{fig:bist_asympt_pattern}
\end{figure}
\begin{definition}[asymptotic pattern stable at infinity, \cref{fig:bist_asympt_pattern}]
Let $\mFar$ be a point of $\mmm$. A function
\[
\ppp : [0,+\infty)\times[0,+\infty)\to\rr^{\dState},\quad (r,t)\mapsto \ppp(r,t)
\]
is called an \emph{asymptotic pattern stable close to $\mFar$ at infinity} if there exists:
\begin{itemize}
\item a point $\mClose$ in $\mmm$, 
\item and a propagating terrace $\ttt$ of bistable fronts travelling to the right, connecting $\mClose$ to $\mFar$, 
\item and a stationary solution $\phi_0$ in $\PhiZeroCentre(\mClose)$,
\end{itemize} 
such that, for every nonnegative quantity $r$ and for every nonnegative time $t$, 
\[
\ppp(r,t) = \phi_0(r) + \bigl( \ttt(r,t) - \mClose \bigr)
\,.
\]
\end{definition}
\subsection{Generic hypotheses on the potential}
\label{subsec:generic_assupmt_pot}
\subsubsection{Escape distance of a minimum point}
\label{subsubsec:Escape_dist}
\begin{notation}
For every $u$ in $\rr^d$, let $\sigma\bigl(D^2V(u)\bigr)$ denote the spectrum (the set of eigenvalues) of the Hessian matrix of $V$ at $u$, and let $\eigVmin(u)$ denote the minimum of this spectrum:
\begin{equation}
\label{def_eigVmin_of_u}
\eigVmin(u) = \min \Bigl(\sigma\bigl(D^2V(u)\bigr)\Bigr)
\,.
\end{equation}
\end{notation}
\begin{definition}[Escape distance of a nondegenerate minimum point]
For every $m$ in $\mmm$, let us call \emph{Escape distance of $m$}, and let us denote by $\dEsc(m)$, the supremum of the set
\begin{equation}
\label{set_for_definition_Escape_distance}
\Bigl\{\delta \in[0,1]: \text{ for all } u \text{ in } \rr^d \text{ satisfying } \abs{u-m}_{\ddd}\le \delta, \quad\eigVmin(u) \ge\frac{1}{2} \eigVmin(m) \Bigr\}
\,.
\end{equation}
\end{definition}
Since the quantity $\eigVmin(u)$ varies continuously with $u$, this Escape distance $\dEsc(m)$ is positive (thus in $(0,1]$). In addition, for all $u$ in $\rr^d$ such that $\abs{u-m}_{\ddd}$ is not larger than $\dEsc(m)$, the following inequality holds:
\begin{equation}
\label{property_dEsc}
\eigVmin(u) \ge\frac{1}{2} \eigVmin(m)
\,.
\end{equation}
\begin{remark}
This notation $\dEsc(m)$ refers to the word ``distance'' (and ``Escape'') and should not be mingled with the space dimension $d$. 
\end{remark}
\subsubsection{Breakup of space translation invariance for travelling fronts}
\label{subsubsec:breakup}
For every ordered pair $(m_-,m_+)$ of points of $\mmm$, every positive quantity $c$, and every function $\phi$ in $\Phi_c(m_-,m_+)$, 
\[
\sup_{\rho\in\rr}\abs{\phi(\rho)-m_-}>\dEsc(m_-)
\quad\text{and}\quad
\sup_{\rho\in\rr}\abs{\phi(\rho)-m_+}>\dEsc(m_+)
\,,
\]
see \cref{fig:esc_distance}. For a proof of this standard result, see for instance \cite[\GlobalBehaviourLemTWApproachCriticalPoints]{Risler_globalBehaviour_2016}. 
\begin{figure}[!htbp]
\centering
\includegraphics[width=0.5\textwidth]{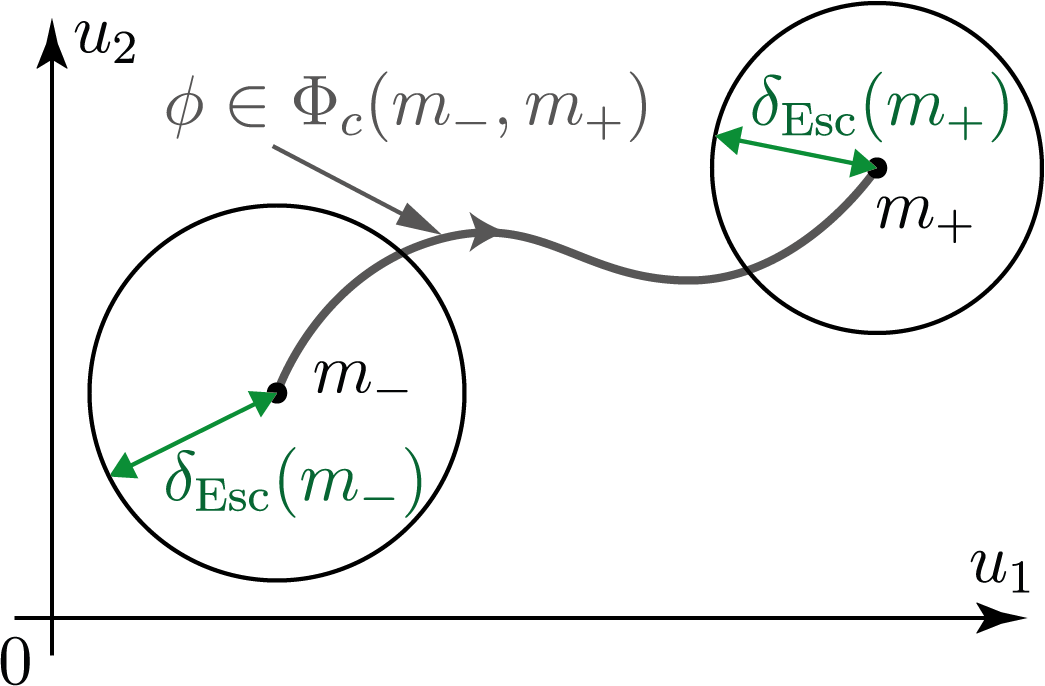}
\caption{Every function in $\Phi_{c}(m_-,m_+)$ escapes at least at distance $\dEsc(m_-)$ of $m_-$ and at distance $\dEsc(m_+)$ of $m_+$.}
\label{fig:esc_distance}
\end{figure}
Thus, for every positive quantity $c$ and every ordered pair $(m_-,m_+)$ of points of $\mmm$, let us introduce the set of \emph{normalized bistable fronts (travelling at the speed $c$) connecting $m_-$ to $m_+$}, defined as
\begin{figure}[!htbp]
\centering
\includegraphics[width=0.75\textwidth]{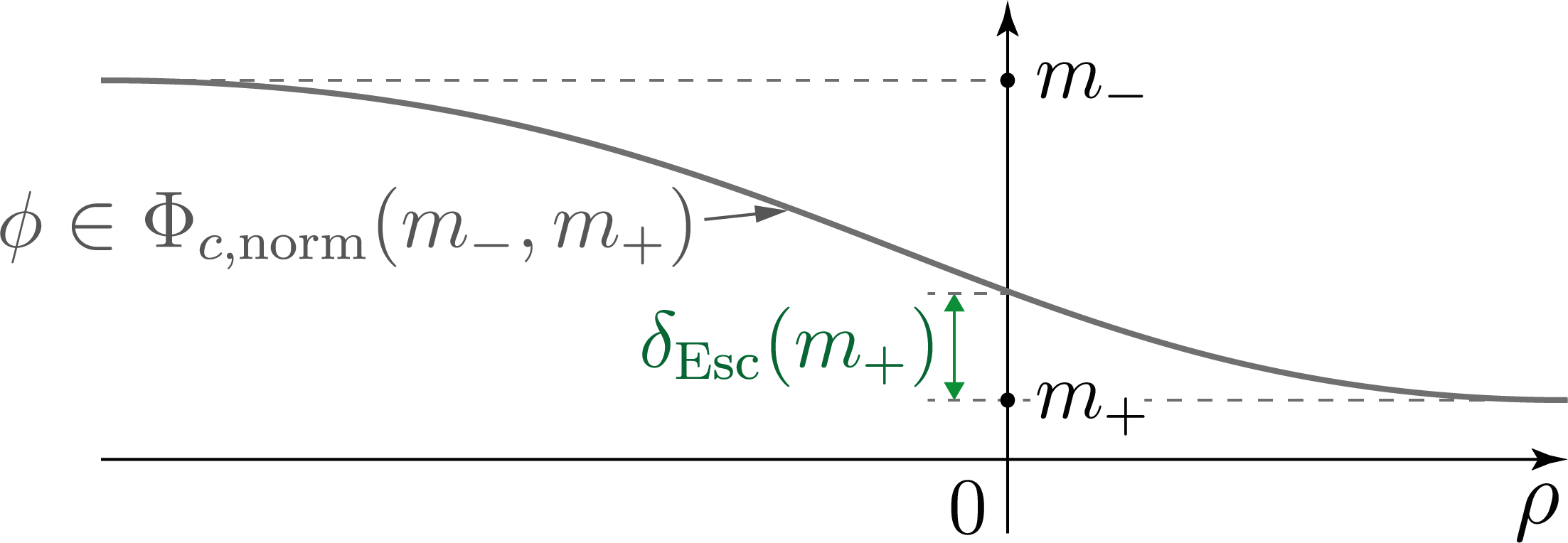}
\caption{Normalized bistable travelling front.}
\label{fig:norm_stat}
\end{figure}
\begin{equation}
\label{def_norm}
\begin{aligned}
\PhicNorm{c}(m_-,m_+) = \bigl\{ & \phi\in\Phi_{c}(m_-,m_+): \abs{\phi(0)-m_+} =\dEsc(m_+) \quad \text{and}\\
& 
\abs{\phi(\rho)-m_+} <\dEsc(m_+)\quad\text{for every positive quantity }\rho\bigr\} 
\,,
\end{aligned}
\end{equation}
 see \cref{fig:norm_stat}.
\subsubsection{Statement of the generic hypotheses}
\label{subsubsec:add_hyp_V}
The results of this paper require a number of generic hypotheses on the potential $V$, that will now be stated. 
\begin{notation}
If $m_+$ is a point in $\mmm$ and $c$ is a \emph{positive} quantity, let $\Phi_c(m_+)$ denote the set of bounded (thus globally defined) profiles of \emph{nonconstant} waves travelling at the speed $c$ and ``invading'' the homogeneous equilibrium $m_+$; with symbols, 
\[
\begin{aligned}
\Phi_c(m_+) = \bigl\{ &
\phi:\rr\to\rr^d : 
\phi \text{ is a \emph{nonconstant} global solution of system \cref{syst_trav_front}}
\\
& \text{and}\quad
\sup_{\rho\in\rr}\abs{\phi(\rho)}<+\infty
\quad\text{and}\quad
\phi(\rho)\xrightarrow[\rho\to +\infty]{} m_+
\bigr\}
\,,
\end{aligned}
\]
and let 
\[
\begin{aligned}
\PhicNorm{c}(m_+) = \bigl\{ & \phi\in\Phi_{c}(m_+): \abs{\phi(0)-m_+} =\dEsc(m_+)\quad \text{and}\\
& 
\abs{\phi(\rho)-m_+} <\dEsc(m_+)\quad\text{for all}\quad \rho \text{ in } (0,+\infty)\bigr\} 
\,. \\
\end{aligned}
\]
\end{notation}
Here are the six generic hypotheses that will be required. 
\begin{description}
\item[\hypOnlyBistLabel]\hypertarget{hypOnlyBist} Every nonconstant bounded wave travelling at a nonzero speed and invading a stable equilibrium (a point of $\mmm$) is a bistable travelling front. With symbols, for every $m_+$ in $\mmm$ and every positive quantity $c$, 
\[
\begin{aligned}
\Phi_c(m_+) &= \bigcup_{m_-\in\mmm} \Phi_c(m_-,m_+) \,,\\
\text{or equivalently}\quad
\PhicNorm{c}(m_+) &= \bigcup_{m_-\in\mmm} \PhicNorm{c}(m_-,m_+)
\,.
\end{aligned}
\]
\item[\hypDiscVelLabel]\hypertarget{hypDiscVel} For every $m_+$ in $\mmm$, the set:
\[
\bigl\{ c\text{ in }[0,+\infty) : \Phi_c(m_+)\not=\emptyset \bigr\} 
\]
has an empty interior. 
\item[\hypDiscFrontLabel]\hypertarget{hypDiscFront} For every point $m_+$ in $\mmm$ and every positive quantity $c$, the set
\[
\bigl\{ \bigl(\phi(0),\phi'(0)\bigr) : \phi\in\PhicNorm{c}(m_+) \bigr\}
\]
is totally discontinuous --- if not empty --- in $\rr^{2\dState}$. That is, its connected components are singletons. Equivalently, the set $\PhicNorm{c}(m_+)$ is totally disconnected for the topology of compact convergence (uniform convergence on compact subsets of $\rr$).
\end{description}
In these two last definitions, the subscript ``disc'' refers to the concept of ``discontinuity'' or ``discreteness''. The following hypothesis will be required to ensure that the number of travelling fronts involved in the asymptotic behaviour of a radially symmetric solution stable at infinity is finite: 
\begin{description}
\item[\hypCriticalValuesLabel]\hypertarget{hypCriticalValues} The set of \emph{critical values} of $V$, that is the set
\[
\bigl\{V(u) : u\in\rr^d\text{ and }\nabla V(u)=0 \bigr\} 
\,,
\]
is finite. 
\end{description}
The next hypothesis is the analogue of \textup{(\hyperlink{hypDiscFront}{\hypDiscFrontRef})} for radially symmetric stationary solutions. 
\begin{description}
\item[\hypDiscStationaryLabel]\hypertarget{hypDiscStationary} For every point $m$ in $\mmm$, the set
\[
\bigl\{ \phi(0) : \phi\in\PhiZeroCentre(m) \bigr\}
\]
is totally discontinuous in $\rr^{\dState}$. That is, its connected components are singletons. Equivalently, the set $\PhiZeroCentre(m)$ is totally disconnected for the topology of compact convergence (uniform convergence on compact subsets of $[0,+\infty)$).
\end{description}
Finally, let us us call \cref{hyp_gen} the union of these five generic hypotheses:
\begin{gather}
\tag{G}
\text{\textup{(\hyperlink{hypOnlyBist}{\hypOnlyBistRef})} and \textup{(\hyperlink{hypDiscVel}{\hypDiscVelRef})} and \textup{(\hyperlink{hypDiscFront}{\hypDiscFrontRef})} and \textup{(\hyperlink{hypCriticalValues}{\hypCriticalValuesRef})} and \textup{(\hyperlink{hypDiscStationary}{\hypDiscStationaryRef})}}.
\label{hyp_gen}
\end{gather}
A formal proof of the genericity of these hypotheses is provided in \cite{JolyRisler_genericTransversalityTravStandFrontsPulses_2023} (for \textup{(\hyperlink{hypOnlyBist}{\hypOnlyBistRef})}, \textup{(\hyperlink{hypDiscVel}{\hypDiscVelRef})}, \textup{(\hyperlink{hypDiscFront}{\hypDiscFrontRef})}, and \textup{(\hyperlink{hypCriticalValues}{\hypCriticalValuesRef})}) and in \cite{Risler_genericTransvRadSymStatSol_2023} (for \textup{(\hyperlink{hypDiscStationary}{\hypDiscStationaryRef})}). 
\subsection{Main result}
\begin{theorem}[global asymptotic behaviour]
\label{thm:main}
Let $V$ denote a function in $\ccc^2(\rr^{\dState},\rr)$ satisfying the coercivity hypothesis \cref{hyp_coerc} and the generic hypotheses \cref{hyp_gen}. Then, for every solution stable at infinity $(r,t)\mapsto u(r,t)$ of system \cref{syst_rad_sym}, there exists an asymptotic pattern $\ppp$ stable at infinity such that
\[
\sup_{r\in[0,+\infty)}\abs{u(r,t)-\ppp(r,t)}\to 0
\quad\text{as}\quad
t\to + \infty
\,.
\]
\end{theorem}
\subsection{Additional results}
%
\subsubsection{Residual asymptotic energy}
%
Here is an additional conclusion to \cref{thm:main}. 
\begin{proposition}[residual asymptotic energy]
\label{prop:resid_asympt_energy}
Assume that the assumptions of \cref{thm:main} hold. With the notation of this theorem, if $\mClose$ and $\mFar$ denote the two points of $\mmm$ such that the propagating terrace $\ttt$ involved in the asymptotic pattern of the solution connects $\mClose$ to $\mFar$, and if $\phi_0$ denotes the function of $\PhiZeroCentre(\mClose)$ involved in this asymptotic pattern, then, for every small enough positive quantity $\varepsilon$, 
\[
\int_{0}^{\varepsilon t} r^{d-1}\Bigl( \frac{1}{2}u_r(r,t)^2 + V\bigl(u(r,t)\bigr) - V(\mClose)\Bigr) \, dr \to \eee[\phi_0]
\quad\text{as}\quad
t\to+\infty
\,.
\]
\end{proposition}
The quantity $\eee[\phi_0]$ may be called the \emph{residual asymptotic energy} of the solution. 
\subsubsection{``Mountain pass'' existence of a ``ground state''}
%
Assume that $V$ satisfies hypothesis \cref{hyp_coerc}. 
\begin{notation}
If $m$ is a point in $\mmm$, let $\basatt(m)$ denote the basin of attraction (for the semi-flow of system \cref{syst_rad_sym}) of the homogeneous equilibrium $m$:
\[
\basatt(m)=\bigl\{ u_0\in Y: (S_t u_0)(r) \to m \text{ , uniformly with respect to } r, \text{ as } t\to+\infty \bigr\} \,,
\]
and let $\partial\basatt(m)$ denote the topological border, in $Y$, of $\basatt(m)$. 
\end{notation}
The following statement can be seen as the ``semi-flow'' version of a standard result ensuring the existence of a ``ground state'' for system \cref{syst_rad_sym_stationary}. Variants of this existence result have been established in numerous references, for instance in \cite{BerestyckiLionsPeletier_odeExistPosSolSemilin_1981} (by a direct ``shooting'' method probably specific to the scalar case where $\dState$ is equal to $1$), and in \cite{BerestyckiLions_existenceGroundState_1983} (by a more general variational method).
\begin{proposition}[``mountain pass'' existence of a ``ground state'' and attractor of the border of the basin of attraction of a stable homogeneous equilibrium]
\label{prop:mountain_pass_existence_ground_state}
Assume that $V$ satisfies hypothesis \cref{hyp_coerc} and let $m$ be a point in $\mmm$ which is not a global minimum point of $V$. Then the following conclusions hold. 
\begin{enumerate}
\item There exists at least one nonconstant function in $\PhiZeroCentre(m)$.
\item The set $\partial\basatt(m)\cap\YstabInfty(m)$ is nonempty.
\item For every solution $(r,t)\mapsto u(r,t)$ of system \cref{syst_rad_sym} in this set $\partial\basatt(m)\cap\YstabInfty(m)$, there exists a function $\phi$ in $\PhiZeroCentre(m)$ such that $\phi$ is not identically equal to $m$ and such that 
\[
\abs{u(r,t)-\phi(r)}\to 0
\quad\text{as}\quad 
t\to+\infty
\,,
\]
uniformly with respect to $r$ in $[0,+\infty)$. 
\end{enumerate}
\end{proposition}
\section{Preliminaries}
\label{sec:preliminaries}
As everywhere else, let us consider a function $V$ in $\ccc^2(\rr^{\dState},\rr)$ satisfying the coercivity hypothesis \cref{hyp_coerc}. 
\subsection{Global existence of solutions and attracting ball for the semi-flow}
\label{subsec:glob_exist}
\begin{proposition}[global existence of solutions and attracting ball]
\label{prop:attr_ball}
For every function $u_0$ in $Y$, system \cref{syst_rad_sym} has a unique globally defined solution $t\mapsto S_t u_0$ in $\ccc^0([0,+\infty),Y)$ with initial condition $u_0$. In addition, there exist a positive quantity $\Rattinfty$ (radius of attracting ball for the $L^\infty$-norm), depending only on $V$, such that, for every large enough positive time $t$, 
\[
\begin{aligned}
\norm{r\mapsto(S_t u_0)(r)}_{\Linfty} &\le \Rattinfty \,, \\
\text{and}\qquad
\norm{r\mapsto(S_t u_0)(r)}_{\Honeul} &\le \RHoneul 
\,.
\end{aligned}
\]
\end{proposition}
\begin{proof}
For a proof of this rather standard result, see for instance \cite[\InvasionRelaxationPropAttBall]{Risler_noInvasionCaseHigherSpace_2020}, which provides identical conclusions for system \cref{syst_higher_dim} (without the radial symmetry hypothesis). 
\end{proof}
In addition, system~\cref{syst_rad_sym} has smoothing properties (Henry \cite{Henry_geomSemilinParab_1981}). Due to these properties, since $V$ is of class $\ccc^2$, for every quantity $\alpha$ in the interval $(0,1)$, every solution $t\mapsto S_t u_0$ in $\ccc^0([0,+\infty),Y)$ actually belongs to
\[
\ccc^0\left((0,+\infty),\cccb{2,\alpha}\right) \cap \ccc^1\left((0,+\infty),\cccb{0,\alpha}\right),
\]
and, for every positive quantity $\varepsilon$, the quantities
\begin{equation}
\label{bound_u_ut_ck}
\sup_{t\ge\varepsilon}\norm{S_t u_0}_{\cccb{2,\alpha}}
\quad\text{and}\quad
\sup_{t\ge\varepsilon}\norm{\frac{d(S_t u_0)}{dt}(t)}_{\cccb{0,\alpha}}
\end{equation}
are finite. 
\subsection{Asymptotic compactness of solutions}
\label{subsec:compactness}
The next two lemmas will be used in the proofs of \cref{prop:inv_cv,prop:relax_implies_cv}. 
\begin{lemma}[asymptotic compactness in the infinite radius limit]
\label{lem:compactness}
For every solution $(r,t)\mapsto u(r,t)$ of system \cref{syst_rad_sym}, and for every sequence $(r_n,t_n)_{n\in\nn}$ in $[0,+\infty)^2$ such that $r_n$ and $t_n$ go to $+\infty$ as $n$ goes to $+\infty$, there exists a entire solution $\widebar{u}$ of system \cref{syst_rad_sym_large_radius} in 
\[
\ccc^0\left(\rr,\cccbR{2}\right)\cap \ccc^1\left(\rr,\cccbR{0}\right)
\,,
\]
such that, up to replacing the sequence $(r_n,t_n)_{n\in\nn}$ by a subsequence, 
\begin{equation}
\label{compactness}
D^{2,1}u(r_n+\cdot,t_n+\cdot)\to D^{2,1}\widebar{u}
\quad\text{as}\quad
n\to+\infty
\,,
\end{equation}
uniformly on every compact subset of $\rr^2$, where the symbol $D^{2,1}v$ stands for $(v,v_r,v_{rr},v_t)$ (for $v$ equal to $u$ or $\widebar{u}$). 
\end{lemma}
\begin{lemma}[asymptotic compactness close to the origin]
\label{lem:compactness_close_to_origin}
For every solution $(r,t)\mapsto u(r,t)$ of system \cref{syst_rad_sym}, and for every sequence $(t_n)_{n\in\nn}$ in $[0,+\infty)$ such that $t_n\to+\infty$ as $n\to+\infty$, there exists a entire solution $\widebar{u}$ of system \cref{syst_rad_sym} in 
\[
\ccc^0\left(\rr,\cccb{2}\right)\cap \ccc^1\left(\rr,\cccb{0}\right)
\,,
\]
such that, up to replacing the sequence $(t_n)_{n\in\nn}$ by a subsequence, 
\begin{equation}
\label{compactness_close_to_origin}
D^{2,1}u(x_n+\cdot,t_n+\cdot)\to D^{2,1}\widebar{u}
\quad\text{as}\quad
n\to+\infty
\,,
\end{equation}
uniformly on every compact subset of $[0,+\infty)\times\rr$
\end{lemma}
\begin{proof}[Proofs of \cref{lem:compactness,lem:compactness_close_to_origin}]
See \cite[1963]{MatanoPolacik_entireSolutionBistableParabEquTwoCollidingPulses_2017} or the proof of \cite[\GlobalRelaxationLemAsymptCompactness]{Risler_globalRelaxation_2016}. 
\end{proof}
\subsection{Time derivative of (localized) energy and \texorpdfstring{$L^2$}{L2}-norm of a solution}
\label{subsec:1rst_ord}
Let $(r,t)\mapsto u(r,t)$ be a solution of system \cref{syst_rad_sym} and $m$ be a point of $\mmm$. Let us assume, in the next calculations, that $t$ is positive, so that according to \cref{bound_u_ut_ck} the regularities of $u$ and $u_t$ ensure that all integrals converge. 
\subsubsection{Standing frame}
Let $r\mapsto \psi(r)$ denote a function in the space $W^{1,1}\bigl([0,+\infty),\rr\bigr)$ (that is a function belonging to $L^1\bigl([0,+\infty)\bigr)$ together with its first derivative), and let us introduce the energy (Lagrangian) and the $L^2$-norm of the distance to $m$, localized by the weight function $\psi$:
\[
\int_0^{+\infty} \psi(r) \Bigl(\frac{1}{2}u_r(r,t)^2 + V\bigl(u(r,t)-V(m)\bigr)\Bigr) \, dr
\quad\text{and}\quad
\int_0^{+\infty} \psi(r) \frac{1}{2}\bigl(u(r,t)-m\bigr)^2\, dr
\,.
\]
Let us assume that $\psi(0) = 0$, and, to simplify the presentation, let us assume that
\[
m = 0_{\rr^{\dState}}
\quad\text{and}\quad
V(m) = V(0_{\rr^{\dState}}) = 0
\,.
\]
Then the time derivatives of the two integrals above read:
\begin{equation}
\label{ddt_loc_en_stand_fr}
\frac{d}{dt} \int_0^{+\infty} \psi\Bigl(\frac{1}{2}u_r^2 + V(u)\Bigr) \, dr =  \int_0^{+\infty} \biggl[ - \psi u_t^2 
+ \Bigl( \frac{d-1}{r} \psi - \psi' \Bigl) u_t \cdot u_r \biggr] \, dr
\,,
\end{equation}
and
\begin{equation}
\label{ddt_loc_L2_stand_fr}
\frac{d}{dt} \int_0^{+\infty} \psi\frac{1}{2}u^2\, dr = \int_0^{+\infty} \biggl[ \psi \bigl( - u \cdot \nabla V(u) - u_r^2 \bigr)
+ \Bigl( \frac{d-1}{r} \psi - \psi' \Bigl) u \cdot u_r \biggr] \, dr
\,.
\end{equation}
In both expressions, the border term at $r$ equals $0$ coming from the integration by parts vanishes since $\psi(0) = 0$. In both expressions again, the last term disappears on every domain where $\psi(r)$ is proportional to $r^{d-1}$ (this corresponds to a uniform weight for the Lebesgue measure on $\rr^d$).  

More comments on these expressions are provided in \cite{Risler_globalRelaxation_2016}. The sole difference with the one-dimensional space case treated in \cite{Risler_globalRelaxation_2016} is the ``$(d-1)/r$'' curvature terms on the right-hand side of these expressions. Fortunately, this additional term will not induce many changes with respect to the arguments developed in \cite{Risler_globalRelaxation_2016}, since:
\begin{itemize}
\item close to the origin $r=0$, the weight function $\psi$ can be chosen proportional to $r^{d-1}$,
\item far away from the origin $r=0$, this curvature term is just small. 
\end{itemize}
\subsubsection{Travelling frame}
Now let us introduce nonnegative quantities $c$ and $\tInit$ and $\rInit$ (the speed, origin of time, and initial origin of space for the travelling frame respectively, see \vref{fig:trav_fr}). For every nonnegative quantity $s$, let us introduce the interval:
\[
I(s) = [-\rInit-cs,+\infty)
\,,
\]
and, for every $\rho$ in $I(s)$, let
\[
v(\rho,s) = u(r,t)
\quad\text{where}\quad
r = \rInit + cs +\rho 
\quad\text{and}\quad
t = \tInit+s
\]
denote the same solution viewed in a referential travelling at the speed $c$. This function $(\rho,s)\mapsto v(\rho,s)$ is a solution of the system:
\[
v_s - c v_\rho = -\nabla V(v) + \frac{d-1}{\rInit + cs + \rho} v_\rho + v_{\rho\rho}
\,.
\]
This time, let us assume that the weight function $\psi$ is a function of the two variables $\rho$ and $s$, defined on the domain
\[
\bigl\{(\rho,s)\in\rr\times[0,+\infty) : \rho \in I(s) \bigr\}
\]
and such that, for all $s$ in $[0,+\infty)$, the function $\rho\mapsto \psi(\rho,s)$ belongs to $W^{2,1}\bigl(I(s),\rr\bigr)$ and the time derivative $\rho\mapsto \psi_s(\rho, s)$ is defined and belongs to $L^1\bigl(I(s),\rr\bigr)$. Again, let us introduce the energy (Lagrangian) and the $L^2$-norm of the solution, localized by the weight function $\psi$:
\[
\int_{I(s)} \psi(\rho,s) \Bigl(\frac{1}{2}v_\rho(\rho,s)^2 + V\bigl(v(\rho,s)\bigr)\Bigr) \, d\rho
\quad\text{and}\quad
\int_{I(s)}  \psi(\rho,s)\frac{1}{2}v(\rho,s)^2\, d\rho
\,.
\]
Let us assume in addition that, for all $s$ in $[-\tInit,+\infty)$, the functions $\rho\mapsto\psi(\rho,s)$ and $\rho\mapsto\psi_\rho(\rho,s)$ vanish at $\rho = -\rInit -cs$ (at the left end of its domain of definition). Then the time derivatives of these two quantities read:
\begin{equation}
\label{ddt_loc_en_trav_fr}
\begin{aligned}
\frac{d}{ds} \int_{I(s)} \psi \Bigl( \frac{1}{2}v_\rho^2 + V(v)\Bigr) \, d\rho = & \int_{I(s)} \biggl[ - \psi v_s^2 
+ \psi_s \Bigl( \frac{1}{2}v_\rho^2 + V(v) \Bigr)  \\
& + \Bigl( \frac{d-1}{\rInit + cs + \rho} \psi + c \psi - \psi_\rho \Bigl) v_s \cdot v_\rho \biggr] \, d\rho
\,,
\end{aligned}
\end{equation}
and
\begin{equation}
\label{ddt_loc_L2_trav_fr}
\begin{aligned}
\frac{d}{ds} \int_{I(s)} \psi\frac{1}{2}v^2\, d\rho = & \int_{I(s)} \biggl[ \psi \bigl( - v \cdot \nabla V(v) - v_\rho^2 \bigr) 
+ \psi_s \frac{1}{2}v^2 \\
& + \Bigl( \frac{d-1}{\rInit + cs + \rho} \psi + c \psi - \psi_\rho \Bigl) v \cdot v_\rho \biggr] \, d\rho \\
= &  \int_{I(s)} \biggl[ \psi \bigl( - v \cdot \nabla V(v) - v_\rho^2 \bigr) + \frac{\psi_s}{2}v^2 \\
& + \frac{1}{2}(\psi_{\rho\rho} - c \psi_\rho)v^2 +   \frac{(d-1)\psi}{\rInit + cs + \rho}  v \cdot v_\rho \biggr] \, d\rho
\,.
\end{aligned}
\end{equation}
In these expressions again, the integration by part border terms at $\rho = -\rInit-cs$ vanish, and some terms simplify where the quantity
\begin{equation}
\label{ode_weight_tf}
\frac{d-1}{\rInit + cs + \rho} \psi + c \psi - \psi_\rho
\end{equation}
vanishes, that is where $\psi$ is proportional to the expression 
\[
(\rInit + cs + \rho)^{d-1} \exp (c\rho)
\]
(combining the Lebesgue measure and the exponential weight $\exp (c\rho)$). For the time derivative of the $L^2$-functional, a second expression (after integrating by parts the factor $c\psi-\psi_\rho$) is given (it is actually this second expression that will turn out to be the most appropriate for the calculations and estimates to come). 

More comments on these expressions are provided in \cite{Risler_globalBehaviour_2016}. As in the laboratory frame case, the sole difference with the one-dimensional space case treated in \cite{Risler_globalBehaviour_2016} is the ``$(d-1)/(\rInit + cs + \rho)$'' curvature terms on the right-hand side of these expressions. Fortunately, this additional term does not induce many changes with respect to the arguments of \cite{Risler_globalBehaviour_2016}, since:
\begin{itemize}
\item close to the ``origin'' $\rho=-\rInit-cs$, the weight function can be chosen in such a way that the quantity \cref{ode_weight_tf} (involving this curvature term) vanishes or remains small,
\item far away from the origin, this curvature term is just small. 
\end{itemize}
\subsection{Miscellanea}
\label{subsec:misc}
\subsubsection{Second order estimates for the potential around a minimum point}
\begin{lemma}[second order estimates for the potential around a minimum point]
\label{lem:estim_from_def_escape}
For every $m$ in $\mmm$ and every $u$ in $\rr^{\dState}$ satisfying $\abs{u-m}\le\dEsc(m)$, the following estimates hold:
\begin{align}
\label{posit_pot_around_loc_min}
V(u)-V(m) &\ge \frac{\eigVmin(m)}{4} (u-m)^2 \,, \\
\text{and}\qquad
\label{v_nablaV_controls_square_around_loc_min}
(u-m)\cdot \nabla Vu) &\ge \frac{\eigVmin(m)}{2} (u-m)^2 \,, \\
\text{and}\qquad
\label{v_nablaV_controls_pot_around_loc_min}
(u-m)\cdot \nabla V(u) &\ge V(u)-V(m)
\,.
\end{align}
\end{lemma}
\begin{proof}
The three inequalities follow from inequality \vref{property_dEsc} defining $\dEsc(m)$ and from three variants of Taylor's Theorem with Lagrange remainder applied to the function $f$ defined on $[0,1]$ by $f(\theta)=V\bigl(m+\theta (u-m)\bigr)$ (see \cite[\GlobalRelaxationLemEstimFromEscDist]{Risler_globalRelaxation_2016}).
\end{proof}
\subsubsection{Lower quadratic hulls of the potential at minimum points}
\label{subsubsec:low_quad_hull}
As in \cite{Risler_globalRelaxation_2016,Risler_globalBehaviour_2016}, let 
\[
\qLowHull = \min_{m\in\mmm}\ \inf_{u\in\rr^{\dState}\setminus \{m\}}\ \frac{V(u)-V(m)}{(u-m)^2}
\,,
\]
see \cref{fig:low_quad_hull}.
\begin{figure}[!htbp]
\centering
\includegraphics[width=0.5\textwidth]{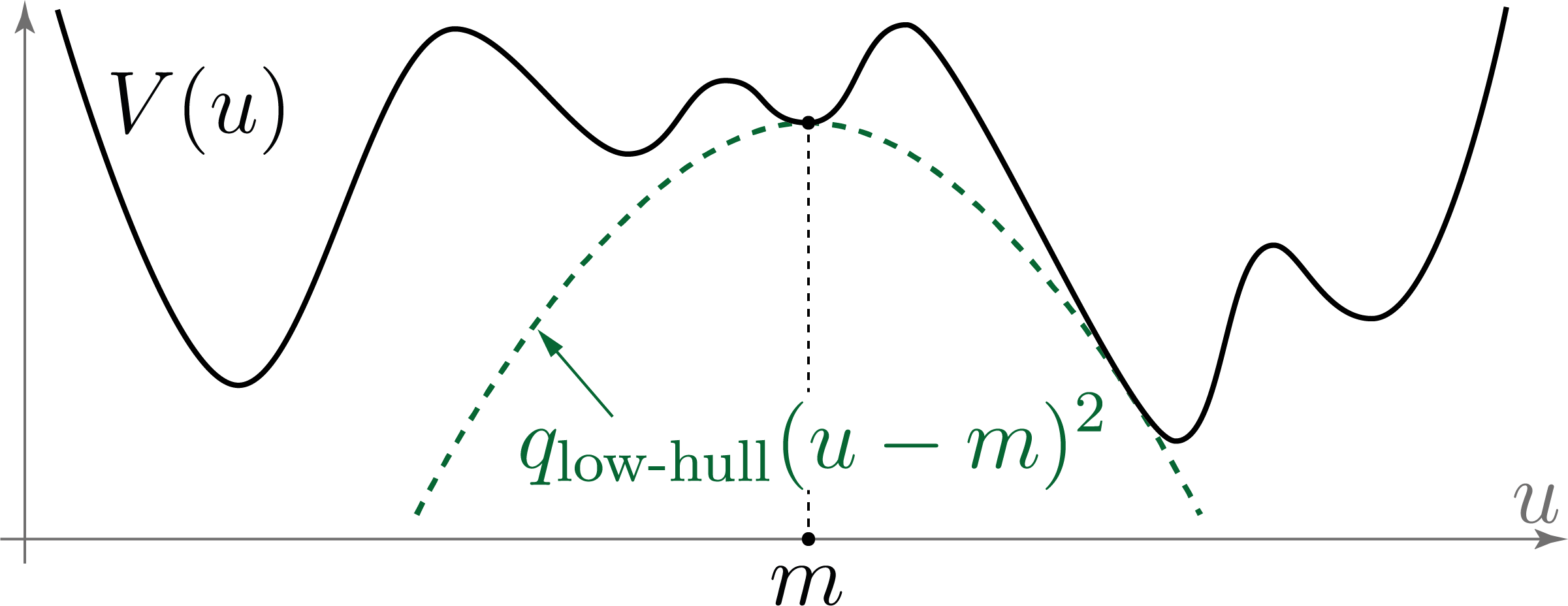}
\caption{Lower quadratic hull of the potential at a minimum point (definition of the quantity $\qLowHull$).}
\label{fig:low_quad_hull}
\end{figure}
and let
\begin{equation}
\label{def_coeffEnZero}
\coeffEnZero=\frac{1}{\max(1, - 4\, \qLowHull)}
\,.
\end{equation}
It follows from this definition that, for every $m$ in the set $\mmm$ and for all $u$ in $\rr^{\dState}$,
\begin{equation}
\label{property_weight_en}
\coeffEnZero \, V(u) +  \frac{1}{4}(u-m)^2\ge 0
\,.
\end{equation}
\section{Invasion implies convergence}
\label{sec:inv_impl_cv}
\subsection{Definitions and hypotheses}
\label{subsec:inv_cv_def_hyp}
As everywhere else, let us consider a function $V$ in $\ccc^2(\rr^{\dState},\rr)$ satisfying the coercivity hypothesis \cref{hyp_coerc}. Let us consider a point $m$ in $\mmm$, a function (initial condition) $u_0$ in $Y$, and the corresponding solution $(r,t)\mapsto u(r,t) = (S_t u_0)(r)$ defined on $[0,+\infty)^2$. 

It will not be assumed that this solution is stable at infinity, but instead, as stated by the next hypothesis \textup{(\hyperlink{hypHom}{\hypHomRef})}, that there exists a growing interval, travelling at a positive speed, where the solution is close to $m$ (the subscript ``hom'' in the definitions below refers to this ``homogeneous'' area), see \cref{fig:inv_cv}.
\begin{figure}[!htbp]
\centering
\includegraphics[width=.8\textwidth]{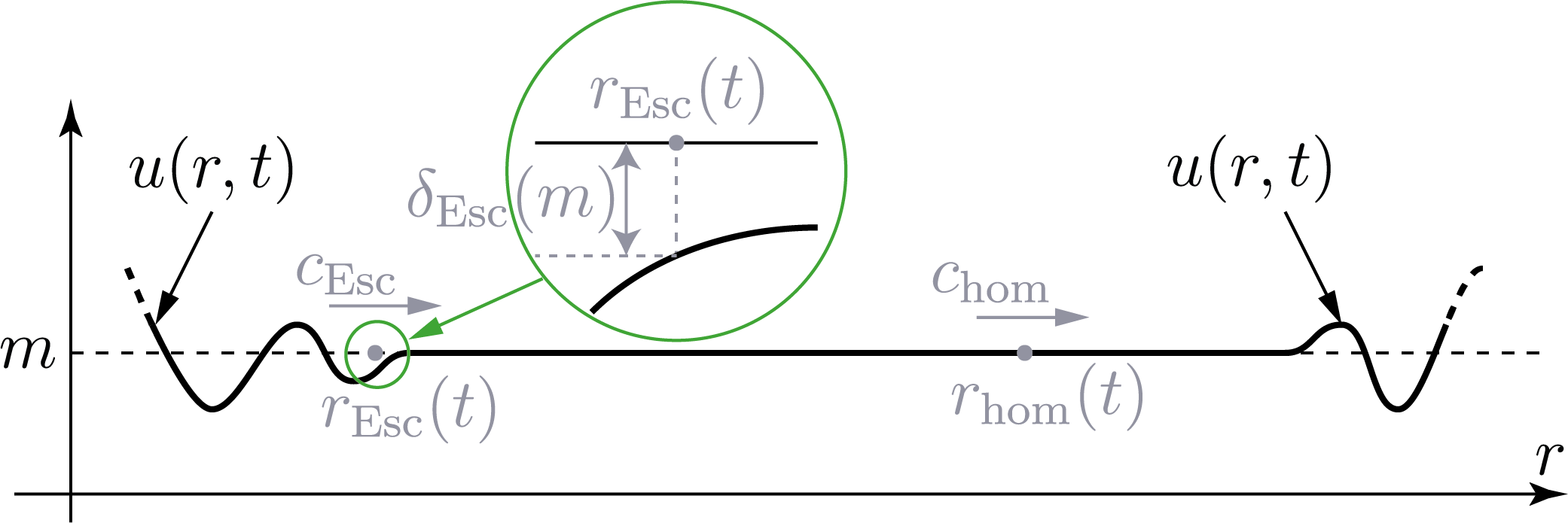}
\caption{Illustration of hypotheses \textup{(\hypHomRef)} and \textup{(\hypInvRef)}.}
\label{fig:inv_cv}
\end{figure}
\begin{description}
\item[\hypHomLabel]\hypertarget{hypHom} There exists a positive quantity $\cHom$ and a $\ccc^1$-function 
\[
\rHom : [0,+\infty)\to \rr\,,
\quad\text{satisfying}\quad
\rHom'(t) \to \cHom
\quad\text{as}\quad
t\to + \infty
\,,
\]
such that, for every positive quantity $L$, 
\[
\sup_{\rho\in[-L,L]}\abs{u\bigl( \rHom(t) + \rho, t\bigr) - m } \to 0
\quad\text{as}\quad
t\to + \infty
\,.
\]
\end{description}
For every $t$ in $[0+\infty)$, let us denote by $\rEsc(t)$ the supremum of the set:
\[
\Bigl\{ r\in \bigl[0,\rHom(t)\bigr] : \abs{u(r,t) - m} = \dEsc(m) \Bigr\}
\,,
\]
with the convention that $\rEsc(t)$ equals $-\infty$ if this set is empty. In other words, $\rEsc(t)$ is the first point at the left of $\rHom(t)$ where the solution ``Escapes'' at the distance $\rEsc$ from the stable homogeneous equilibrium $m$. This point will be referred to the ``Escape point'' (with an upper-case ``E'', by contrast with another ``escape point'' that will be introduced later, with a lower-case ``e'' and a slightly different definition). Let us consider the upper limit of the mean speeds between $0$ and $t$ of this Escape point:
\[
\cEsc = \limsup_{t\to +\infty} \frac{\rEsc(t)}{t}
\,,
\]
and let us make the following hypothesis, stating that the area around $\rHom(t)$ where the solution is close to $m$ is ``invaded'' from the left at a nonzero (mean) speed.
\begin{description}
\item[\hypInvLabel]\hypertarget{hypInv} The quantity $\cEsc$ is positive. 
\end{description}
\subsection{Statement}
\label{subsec:inv_cv_stat}
The aim of \cref{sec:inv_impl_cv} is to prove the following proposition, which is the main step in the proof of \cref{thm:main}. The proposition is illustrated by \cref{fig:inv_cv_bis}.
\begin{figure}[!htbp]
	\centering
    \includegraphics[width=\textwidth]{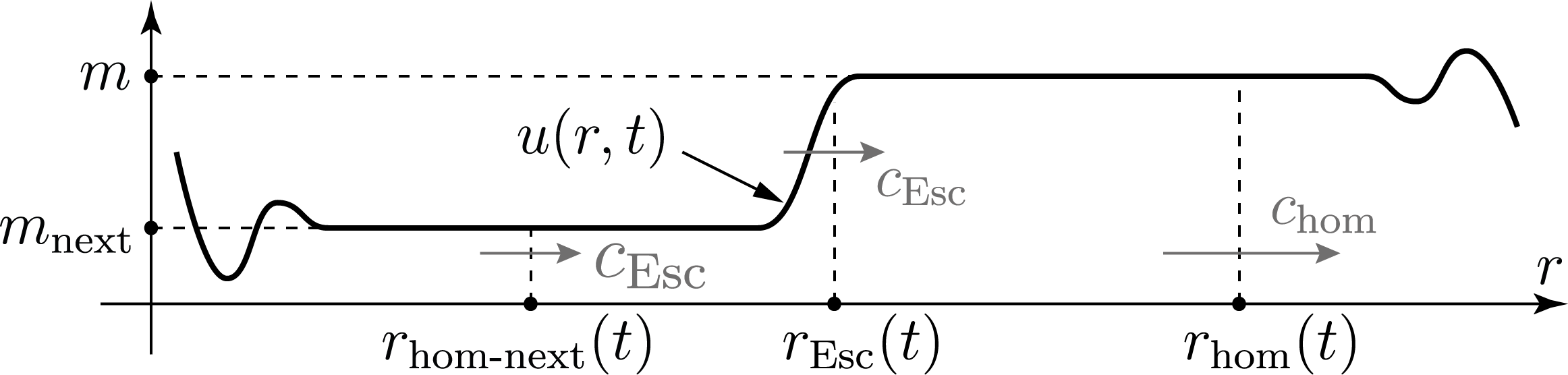}
    \caption{Illustration of \cref{prop:inv_cv}.}
    \label{fig:inv_cv_bis}
\end{figure}
\begin{proposition}[invasion implies convergence]
\label{prop:inv_cv}
Assume that $V$ satisfies the coercivity hypothesis \cref{hyp_coerc} and the generic hypotheses \textup{(\hyperlink{hypOnlyBist}{\hypOnlyBistRef})} and \textup{(\hyperlink{hypDiscVel}{\hypDiscVelRef})} and \textup{(\hyperlink{hypDiscFront}{\hypDiscFrontRef})}, and, keeping the definitions and notation above, let us assume that for the solution under consideration hypotheses \textup{(\hyperlink{hypHom}{\hypHomRef})} and \textup{(\hyperlink{hypInv}{\hypInvRef})} hold. Then the following conclusions hold. 
\begin{itemize}
\item For $t$ large enough positive, the function $t\mapsto \rEsc(t)$ is of class $\ccc^1$ and 
\[
\rEsc'(t)\to \cEsc
\quad\text{as}\quad
t\to +\infty
\,.
\]
\item There exist:
\begin{itemize}
\item a point $\mNext$ in $\mmm$ satisfying $V(\mNext)<V(m)$,
\item a profile of travelling front $\phi$ in $\PhicNorm{\cEsc}(\mNext,m)$,
\item a $\ccc^1$-function $[0,+\infty)\to\rr$, $t\mapsto \rHomNext(t)$,
\end{itemize}
such that, as time goes to $+\infty$, the following limits hold:
\[
\rEsc(t)-\rHomNext(t)\to +\infty
\quad\text{and}\quad
\rHomNext'(t)\to \cEsc
\]
and
\[
\sup_{r\in[\rHomNext(t) \, ,\ \rHom(t)]} \abs{u(r,t) - \phi\bigl( r-\rEsc(t) \bigr)} \to 0 
\]
and, for every positive quantity $L$, 
\[
\sup_{\rho\in[-L,L]}\abs{u\bigl( \rHomNext(t)+\rho,t\bigr) - \mNext}\to 0 
\,.
\]
\end{itemize}
\end{proposition}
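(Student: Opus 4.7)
The plan is to transpose, to the radial setting, the one-dimensional argument of \cite{Risler_globalBehaviour_2016}. The strict positivity of $\cEsc$ is crucial: the spatial region of interest is carried to infinity, so the curvature term $(d-1)/r$ becomes negligible and the travelling-frame gradient structure \cref{form_grad_tf} is asymptotically recovered. The proof has three phases: compactness to extract limit profiles near the Escape point, relaxation in the frame travelling at speed $\cEsc$ to show that these limits are travelling-front profiles, and the generic hypotheses \hypGen to upgrade the convergence from subsequences to full limits.

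\textbf{Compactness.} For sequences $t_n \to +\infty$ and positions $r_n$ with $r_n/t_n \to \cEsc$, translate the solution via $v_n(\rho,s) = u(r_n+\rho,\, t_n+s)$. By \cref{lem:attr_ball} and the smoothing bounds \cref{bound_u_ut_ck}, the family $(v_n)$ is relatively compact in $\ccc^1_{\mathrm{loc}}(\rr\times\rr,\rr^n)$; since $r_n\to+\infty$, any limit $v_\infty$ is an entire solution on the full line of the limit system
\[
\partial_s v_\infty = -\nabla V(v_\infty) + \partial_{\rho\rho} v_\infty.
\]

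\textbf{Relaxation, identification, uniqueness.} Apply \cref{ddt_loc_en_trav_fr} with $c=\cEsc$ and with a weight of the form $\psi(\rho,s) = \chi(\rho)(\rInit+cs+\rho)^{d-1} e^{\cEsc\rho}$, where $\chi$ is a smooth cut-off supported on a large neighbourhood of the Escape point; this choice cancels the $v_s\cdot v_\rho$ factor in \cref{ddt_loc_en_trav_fr} and yields
\[
\frac{d}{ds}\int \psi\Bigl(\frac{v_\rho^2}{2}+V(v)-V(m)\Bigr)d\rho = -\int \psi\,v_s^2\,d\rho + R(s),
\]
where $R(s)$ gathers the curvature and cut-off remainders, which are integrable in time because $\rInit+cs\to+\infty$ and $\chi$ localises away from the singular region. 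Using the lower bound \cref{property_weight_en} together with the fact that $v(\rho,s)\to m$ for $\rho$ large positive, the localised energy is bounded below, so $\int_0^{+\infty}\!\!\int \psi\,v_s^2\,d\rho\,ds<+\infty$. Consequently every limit $v_\infty$ is $s$-independent, hence the profile of a travelling front of speed $\cEsc$ for the one-dimensional limit system, with $v_\infty(+\infty)=m$ and $|v_\infty(0)-m|=\dEsc$, $|v_\infty(\xi)-m|<\dEsc$ for $\xi>0$ (by the very definition of $\rEsc(t)$). Hypothesis \hypOnlyBist forces $v_\infty(-\infty)=\mNext\in\mmm$, and the standard travelling-front energy identity $\cEsc\int(v_\infty')^2\,d\xi = V(\mNext)-V(m)$ together with $\cEsc>0$ and $v_\infty$ nonconstant forces $V(\mNext)<V(m)$. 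Hypotheses \hypDiscVel and \hypDiscFront then make the set of admissible limit pairs $(\mNext,\phi)$ totally disconnected. Continuity in $t$ of $(u(\rEsc(t),t),u_r(\rEsc(t),t))$ combined with a connectedness argument on $\omega$-limits excludes drift between components and yields a single $(\mNext,\phi)\in\mmm\times\Phi_{\cEsc,\mathrm{norm}}(\mNext,m)$ capturing the whole family. The transversality $\phi'(0)\cdot(\phi(0)-m)\neq 0$, obtained by differentiating $|\phi-m|^2$ twice at the escape point and using inequalities \cref{property_r_Esc}, makes $\rEsc$ of class $\ccc^1$ via the implicit function theorem and yields $\rEsc'(t)\to\cEsc$. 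Finally, choosing $\rHomNext(t)=\rEsc(t)-f(t)$ for a slowly growing $f(t)\to+\infty$ extracted from the above convergence, the same compactness--relaxation scheme applied at $(\rHomNext(t),t)$ gives a limit in $\ccc^1_{\mathrm{loc}}$ that takes values in the $\dEsc$-ball around $\mNext$ with vanishing time derivative; local strict convexity of $V$ at $\mNext$ (inequalities \cref{property_r_Esc} again) forces this limit to be the constant $\mNext$, hence the $H^1$ convergence behind the front.

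\textbf{Main obstacle.} The principal technical difficulty is making the relaxation identity genuinely dissipative despite the non-variational curvature term $(d-1)/(\rInit+cs+\rho)$: the exponential weight $e^{\cEsc\rho}$ amplifies contributions on the right-hand tail, so controlling the curvature remainder $R(s)$ uniformly in $s$ (and in the truncation parameter of $\chi$) is delicate and is the radial counterpart of the careful weight-function construction of \cite{Risler_globalBehaviour_2016}. A secondary but non-trivial point is the passage from the average-sense vanishing of $v_s$ to the locally uniform convergence of $v$ itself, which requires parabolic regularity together with the full strength of \hypDiscVel and \hypDiscFront to rule out sliding along a one-parameter family of profiles.
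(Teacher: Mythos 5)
Your overall strategy (compactness around the Escape point, relaxation of a weighted energy in a frame travelling at speed $\cEsc$, then the generic hypotheses \hypGen{} to pass from subsequential to full limits) is the same as the paper's, and your identification/uniqueness phase matches the intended endgame. But there are two genuine gaps in the middle, and they are precisely where the paper spends most of its effort.

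First, you treat "the frame travelling at speed $\cEsc$" as a well-defined object from the start, whereas hypothesis \hypInv{} only provides $\cEsc=\limsup_{t\to+\infty}\rEsc(t)/t>0$: a priori the Escape point may oscillate with unbounded excursions, and then no single travelling frame captures it. The paper first builds a laboratory-frame firewall $\fff_0$ with the weight $T_\rho\psi_0$ (\cref{lem:dt_fire_0}), uses it with the no-escape hull to get the a priori Lipschitz bound $\resc(t+s)\le\resc(t)+\cnoesc s$ on a surrogate escape point (\cref{lem:inv}, \cref{control_escape}), and only then proves that the four limit mean speeds $\cescinf,\cescsup,\underbarcescinf,\barcescsup$ coincide (\cref{prop:cv_mean_inv_vel,prop:further_control}). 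That step is where \hypDiscVel{} actually enters: one picks $c$ with $\cescinf<c<\barcescsup$ and $\Phi_c(0_{\rr^n})=\emptyset$, and the nonzero-dissipation lemma (\cref{lem:dissip_no_tf_vel}) contradicts the relaxation inequality. In your write-up \hypDiscVel{} appears only later, in the total-disconnectedness argument, so the speed-convergence step is missing altogether; without it your choice of sequences with $r_n/t_n\to\cEsc$ does not localize the dynamics.

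Second, your claim that the remainder $R(s)$ is "integrable in time" is not correct as stated, and you offer no mechanism to replace it. The right-hand cut-off of $\chi$ produces a term $\cCut(c+\kappa)\int_{\iRight(s)}\chi\bigl(v_\rho^2/2+V(v)\bigr)$ which has no sign and is not small; the paper controls it by a \emph{second} functional, the travelling-frame firewall $\fff(s)$ with its distinct weight $\psi$, whose differential inequality (\cref{lem:decrease_firewall}) confines the pollution to $\SigmaEsc(s)$, and whose time integral is then bounded via the flux estimates on $\Gback$ and $\Gfront$ (\cref{lem:bound_Gback_Gfront}) — which themselves require the escape point to stay behind and $\rhoHom$ ahead, i.e.\ the speed control of the previous paragraph. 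Moreover the curvature remainder genuinely contains a term of order $\rInit^{-1}\exp\bigl(c(\initCut+\cCut s)\bigr)$ (see \cref{bound_ds_en_iMain}), which grows exponentially in $s$: the scheme must therefore be run on bounded time windows $[0,\sFin]$ with $\rInit=\resc(\tInit)\to+\infty$, not once over an infinite horizon. You correctly name this as the "main obstacle", but naming it is not overcoming it. (A small additional remark: your front energy identity should read $\cEsc\int|\phi'|^2\,d\xi=V(m)-V(\mNext)$, which still yields $V(\mNext)<V(m)$.)
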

\subsection{Set-up for the proof, 1}
\label{subsec:inv_cv_set_pf}
Let us keep the notation and assumptions of \cref{subsec:inv_cv_def_hyp}, and let us assume that the hypotheses \cref{hyp_coerc} and \textup{(\hyperlink{hypOnlyBist}{\hypOnlyBistRef})} and \textup{(\hyperlink{hypDiscVel}{\hypDiscVelRef})} and \textup{(\hyperlink{hypDiscFront}{\hypDiscFrontRef})} and \textup{(\hyperlink{hypHom}{\hypHomRef})} and \textup{(\hyperlink{hypInv}{\hypInvRef})} of \cref{prop:inv_cv} hold. 
\subsubsection{Assumptions holding up to changing the origin of time}
Without loss of generality, up to changing the origin of time, it may be assumed that the following properties hold. 
\begin{itemize}
\item According to \vref{prop:attr_ball} (``global existence of solutions and attracting ball''), it may be assumed that, for all $t$ in $[0,+\infty)$, 
\end{itemize}
\begin{equation}
\label{hyp_attr_ball}
\sup_{r\in[0,+\infty)} \, \abs{u(r,t)} \le \Rattinfty
\,.
\end{equation}
\begin{itemize}
\item According to the bounds \vref{bound_u_ut_ck}, it may be assumed that 
\end{itemize}
\begin{equation}
\label{bound_u_ut_ck_bis}
\begin{aligned}
\sup_{t\ge0} \, \norm{r\mapsto u(r,t)}_{\cccb{2}} &< +\infty \\
\text{and}\quad
\sup_{t\ge0} \, \norm{r\mapsto u_t(r,t)}_{\cccb{0}} &< +\infty
\,.
\end{aligned}
\end{equation}
\begin{itemize}
\item According to \textup{(\hyperlink{hypHom}{\hypHomRef})}, it may be assumed that, for all $t$ in $[0,+\infty)$, 
\end{itemize}
\begin{equation}
\label{hyp_xHom_prime_pos}
\rHom'(t)\ge 0
\,.
\end{equation}
\subsubsection{Normalized potential and corresponding solution}
For notational convenience, let us introduce the ``normalized potential'' $V^\dag$ and the ``normalized solution'' $u^\dag$ defined as
\begin{equation}
\label{def_Vdag_udag}
V^\dag(v)=V(m +v)-V(m)
\quad\text{and}\quad
u^\dag(r,t) = u(r,t)-m
\,.
\end{equation}
Thus the origin $0_{\rr^d}$ of $\rr^d$ is to $V^\dag$ what $m$ is to $V$, it is a nondegenerate minimum point for $V^\dag$ (with $V^\dag(0_{\rr^d})=0$), and $u^\dag$ is a solution of system \cref{syst_rad_sym} with potential $V^\dag$ instead of $V$; and, for all $(r,t)$ in $[0,+\infty)^2$, 
\[
V^\dag\bigl(u^\dag(r,t)\bigr) = V\bigl(u(r,t)\bigr)-V(m)
\,.
\]
It follows from inequality \cref{property_weight_en} satisfied by $\coeffEnZero$ that, for all $v$ in $\rr^d$, 
\begin{equation}
\label{def_weight_en_V_dag}
\coeffEnZero \, V^\dag(v) +  \frac{1}{4}v^2\ge 0
\,,
\end{equation}
and it follows from inequalities \cref{posit_pot_around_loc_min,v_nablaV_controls_square_around_loc_min,v_nablaV_controls_pot_around_loc_min} that, for all $v$ in $\rr^d$ satisfying $\abs{v}\le\dEsc(m)$, 
\begin{align}
\label{v_nablaV_controls_square_around_loc_min_dag}
v\cdot \nabla V^\dag(v) &\ge \frac{\eigVmin(m)}{2} v^2 \,, \\
\text{and}\qquad
\label{v_nablaV_controls_pot_around_loc_min_dag}
v\cdot \nabla V^\dag(v) &\ge V^\dag(v)
\,.
\end{align}
\subsection{Firewall function in the laboratory frame}
\subsubsection{Definition} 
\label{subsubsec:def_fire_zero}
%
Let $\kappa_0$ and $\rSmallCurv$ denote two positive quantities, with $\kappa_0$ small enough and $\rSmallCurv$ large enough so that
\begin{equation}
\label{def_kappa_0_rSmallCurv}
\frac{\coeffEnZero}{4}\Bigl( \frac{d-1}{\rSmallCurv} + \kappa_0 \Bigr)^2 + \frac{1}{4}\Bigl( \frac{d-1}{\rSmallCurv} + \kappa_0 \Bigr) \le \frac{1}{2}
\quad\text{and}\quad
\frac{d-1}{\rSmallCurv} + \kappa_0 \le \frac{\eigVmin(m)}{8}
\end{equation}
(those properties will be used to prove inequality \cref{dt_fire_spat_as} below). Since according to its definition \vref{def_coeffEnZero} the quantity $\coeffEnZero$ is not larger than $1$, these quantities may be chosen as
\begin{equation}
\label{def_kappa_zero}
\kappa_0 = \min \Bigl( \frac{1}{2}\, , \, \frac{\eigVmin(m)}{16} \Bigr)
\end{equation}
and
\begin{equation}
\label{def_r_s_c}
\rSmallCurv = \max \Bigl( 2(d-1) \, , \, \frac{16(d-1)}{\eigVmin(m)} \Bigr)
\,.
\end{equation}
Let us consider the weight function $\psi_0$ defined as
\begin{equation}
\label{def_psi_zero}
\psi_0(r) = \exp(-\kappa_0\abs{r})
\,,
\end{equation}
and, for every quantity $\bar{r}$ greater than or equal to $\rSmallCurv$, let $T_{\bar{r}}\psi_0$ denote the function $[0,+\infty)\to \rr$ defined as
\begin{equation}
\label{def_Tbarr_psi_zero}
T_{\bar{r}}\psi_0 (r) =
\left\{
\begin{aligned}
\psi_0(r-\bar{r})\Bigl(\frac{r}{\rSmallCurv}\Bigr)^{d-1} &\quad\text{if}\quad 0\le r\le \rSmallCurv \,, \\
\psi_0(r-\bar{r}) &\quad\text{if}\quad r\ge \rSmallCurv \,,
\end{aligned}
\right.
\end{equation}
see \cref{fig:graph_weight_firezero}.
\begin{figure}[!htbp]
\centering
\includegraphics[width=\textwidth]{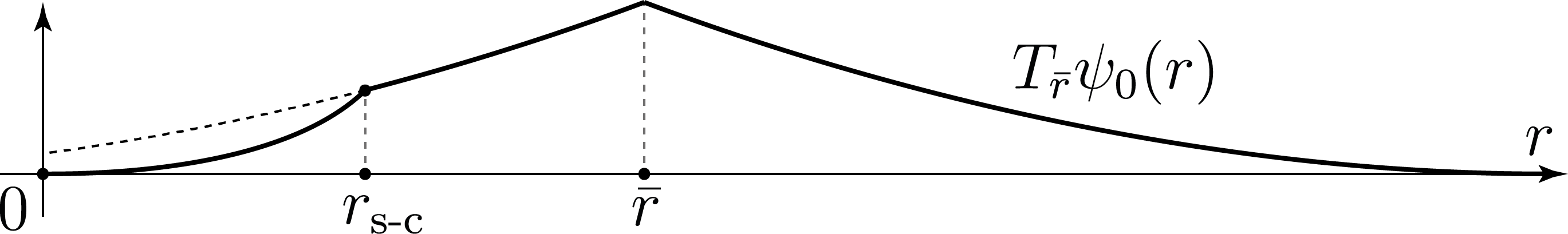}
\caption{Graph of the weight function $r\mapsto T_{\bar{r}}\psi_0(r)$ used to define the firewall function $\fff_0(\bar{r},t)$. The quantity $\rSmallCurv$ is large, and, according to the definition of $\kappa_0$, the slope is small.}
\label{fig:graph_weight_firezero}
\end{figure}

As the following computations will show, for $r$ greater than this quantity $\rSmallCurv$, the ``curvature terms'' that take place in the time derivatives of energy and $L^2$ functionals (see expressions \vref{ddt_loc_en_stand_fr,ddt_loc_L2_stand_fr}) will be small enough for the desired estimates to hold. The subscript ``s-c'' thus refers to ``small curvature'' (or equivalently, ``large radius''). 

Thus, the function $T_{\bar{r}}\psi_0$ defined above is:
\begin{itemize}
\item a translate of the function $\psi_0$ far from the origin (for $r$ greater than $\rSmallCurv$),
\item the same translate multiplied by a factor proportional to the ``Lebesgue measure'' weight $r^{d-1}$ close to the origin (for $r$ smaller than $\rSmallCurv$), this factor being equal to $1$ at $r=\rSmallCurv$ to ensure the continuity of the function. 
\end{itemize}

One purpose of this definition is to control the last terms in the expressions \cref{ddt_loc_en_stand_fr,ddt_loc_L2_stand_fr} for the time derivatives of the energy and $L^2$ functionals. For all nonnegative quantities $\bar{r}$ and $r$ with $\bar{r}$ not smaller than $\rSmallCurv$, 
\[
\frac{d-1}{r} T_{\bar{r}}\psi_0 (r) - T_{\bar{r}}\psi_0' (r) = 
\left\{
\begin{aligned}
&  - \kappa_0 T_{\bar{r}}\psi_0 (r) && \text{if}\quad r<\rSmallCurv \,, \\
& \Bigl( \frac{d-1}{r} - \kappa_0 \Bigr)T_{\bar{r}}\psi_0 (r) && \text{if}\quad \rSmallCurv < r < \bar{r} \,, \\
& \Bigl( \frac{d-1}{r} + \kappa_0 \Bigr)T_{\bar{r}}\psi_0 (r) && \text{if}\quad \bar{r} < r \,, \\
\end{aligned}
\right.
\]
thus, in all three cases,
\begin{equation}
\label{upp_bd_der_T_bar_r_psi_zero}
\abs{\frac{d-1}{r} T_{\bar{r}}\psi_0 (r) - T_{\bar{r}}\psi_0' (r)} \le \Bigl( \frac{d-1}{\rSmallCurv} + \kappa_0  \Bigr) T_{\bar{r}}\psi_0 (r) 
\,.
\end{equation}
For all nonnegative quantities $r$ and $t$, let us introduce the quantities
\begin{equation}
\label{def_Edag_Fdag}
E^\dag(r,t) = \frac{1}{2}u^\dag_r(r,t)^2 + V^\dag \bigl(u^\dag(r,t)\bigr)
\quad\text{and}\quad
F^\dag(r,t) = \coeffEnZero E^\dag(r,t) + \frac{1}{2}u^\dag(r,t)^2
\,,
\end{equation}
and, for all nonnegative quantities $\bar{r}$ and $t$ with $\bar{r}$ not smaller than $\rSmallCurv$, let us introduce the ``firewall'' $\fff_0(\bar{r},t)$ defined as
\begin{equation}
\label{def_fffZero}
\fff_0(\bar{r},t) = \int_0^{+\infty} T_{\bar{r}}\psi_0(r)F^\dag(r,t) \, dr
\,.
\end{equation}
\subsubsection{Coercivity} 
%
\begin{lemma}[firewall coercivity]
\label{lem:coerc_fire_sf}
For every nonnegative time $t$ and nonnegative radius $r$, 
\begin{equation}
\label{coerc_Fdag_sf}
F^\dag(r,t) \ge \min\left(\frac{\coeffEnZero}{2},\frac{1}{4}\right) \bigl( u^\dag_r(r,t)^2 + u^\dag(r,t)^2 \bigr)
\,,
\end{equation}
so that, for every $\bar{r}$ greater than or equal to $\rSmallCurv$, 
\begin{equation}
\label{coerc_fire_sf}
\fff_0(\bar{r},t) \ge \min\left(\frac{\coeffEnZero}{2},\frac{1}{4}\right) \int_{\rr} T_{\bar{r}}\psi_0(r) \bigl( u^\dag_r(r,t)^2 + u^\dag(r,t)^2 \bigr)\, dr
\,.
\end{equation}
\end{lemma}
\begin{proof}
Inequality \cref{coerc_Fdag_sf} follows from inequality \vref{def_weight_en_V_dag} satisfied by $\coeffEnZero$, and \cref{coerc_fire_sf} follows from \cref{coerc_Fdag_sf}. 
\end{proof}
\subsubsection{Linear decrease up to pollution} 
%
For every nonnegative time $t$, let us introduce the following set (the set of radii where the solution ``Escapes'' at a certain distance from $0_{\rr^{\dState}}$):
\begin{equation}
\label{def_Sigma_Esc_0}
\SigmaEscZero(t)=\bigl\{r\in[0,+\infty) : \abs{u^\dag(r,t)} >\dEsc(m)\bigr\}
\,,
\end{equation}
\begin{lemma}[firewall linear decrease up to pollution]
\label{lem:dt_fire_0}
There exist positive quantities $\nuFZero$ and $\KFZero$, depending only on $V$, such that, for all nonnegative quantities $\bar{r}$ and $t$, 
\begin{equation}
\label{dt_fire_0}
\partial_t \fff_0(\bar{r},t)\le-\nuFZero \, \fff_0(\bar{r},t) + \KFZero\, \int_{\SigmaEscZero(t)} T_{\bar{r}}\psi_0(r)\, dr
\,.
\end{equation}
\end{lemma}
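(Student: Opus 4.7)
The plan is to differentiate $\fff_0$ directly via the standing-frame identities \cref{ddt_loc_en_stand_fr,ddt_loc_L2_stand_fr} applied with weight $\psi = T_\rho\psi_0$, then to estimate the result using the pointwise bound \cref{upp_bd_der_T_rho_psi_zero}, Young's inequality, and the quadratic controls \cref{property_r_Esc} valid on the ``good'' region $\{\abs{u}\le\dEsc\}$. The border terms at $r=0$ from the integrations by parts vanish because $T_\rho\psi_0(0)=0$, thanks to the factor $(r/\rSmallCurv)^{d-1}$ built into $T_\rho\psi_0$ near the origin --- which is precisely where the assumption $d\ge 2$ enters.

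Combining $\coeffEnZero$ times \cref{ddt_loc_en_stand_fr} with \cref{ddt_loc_L2_stand_fr} yields the master identity
\[
\partial_t \fff_0(\rho,t) = \int_0^{+\infty} T_\rho\psi_0\bigl[-\coeffEnZero u_t^2 - u\cdot\nabla V(u) - u_r^2\bigr]\,dr + \int_0^{+\infty}\Bigl(\tfrac{d-1}{r}T_\rho\psi_0 - T_\rho\psi_0'\Bigr)\bigl(\coeffEnZero u_t\cdot u_r + u\cdot u_r\bigr)\,dr.
\]
Set $A := (d-1)/\rSmallCurv + \kappa_0$. By \cref{upp_bd_der_T_rho_psi_zero}, the cross integrand is bounded in absolute value by $A\, T_\rho\psi_0\,\bigl(\coeffEnZero \abs{u_t}\abs{u_r} + \abs{u}\abs{u_r}\bigr)$. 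Young's inequality applied as $\coeffEnZero A\abs{u_t}\abs{u_r}\le(\coeffEnZero/2)u_t^2 + (\coeffEnZero A^2/2)u_r^2$ and $A\abs{u}\abs{u_r}\le u_r^2/4 + A^2 u^2$, combined with \cref{def_kappa_0_rSmallCurv} (which gives $\coeffEnZero A^2\le 1$ and $A^2 \le A\le \eigVmin/4$), produces the clean pointwise bound
\[
A\,T_\rho\psi_0\bigl(\coeffEnZero\abs{u_t}\abs{u_r} + \abs{u}\abs{u_r}\bigr) \le T_\rho\psi_0\Bigl[\tfrac{\coeffEnZero}{2}u_t^2 + \tfrac{3}{4}u_r^2 + \tfrac{\eigVmin}{4}u^2\Bigr].
\]
In particular, the overall coefficients of $u_t^2$ and $u_r^2$ remain $-\coeffEnZero/2$ and $-1/4$ respectively, and only a contribution at most $(\eigVmin/4)u^2$ is paid on $u^2$.

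The next step is to split the integral across $\SigmaEscZero(t)$ and its complement. On $\SigmaEscZero(t)^c=\{\abs{u}\le\dEsc\}$, the estimates \cref{property_r_Esc} applied at $m = 0_{\rr^n}$ give $u\cdot\nabla V(u)\ge(\eigVmin/2)u^2$ and $V(u)\le\eigVmax u^2$; the integrand is therefore bounded above by
\[
-T_\rho\psi_0\Bigl[\tfrac{\coeffEnZero}{2}u_t^2 + \tfrac{1}{4}u_r^2 + \tfrac{\eigVmin}{4}u^2\Bigr],
\]
which dominates $-\nuFireZero\,T_\rho\psi_0\bigl[\coeffEnZero(u_r^2/2 + V(u)) + u^2/2\bigr]$ for a suitable $\nuFireZero>0$ depending only on $V$ (using $V(u)\le\eigVmax u^2$ to convert the $u^2$-bound into control of the firewall integrand). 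On $\SigmaEscZero(t)$, the $L^\infty$-bound \cref{hyp_attr_ball} and the smoothing bounds \cref{bound_u_ut_ck_bis} control $\abs{u}$, $\abs{u_r}$, $\abs{u_t}$, $\abs{V(u)}$, $\abs{\nabla V(u)}$ by constants depending only on $V$, so the integrand is bounded above by a constant multiple of $T_\rho\psi_0$; this furnishes the pollution term $\KFireZero\int_{\SigmaEscZero(t)} T_\rho\psi_0\,dr$.

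The main obstacle is simply the bookkeeping of Young's inequalities, but this has been arranged by the tailored definitions \cref{def_kappa_0_rSmallCurv} of $\kappa_0$ and $\rSmallCurv$. Compared with the one-dimensional argument in \cite{Risler_globalBehaviour_2016}, the only genuinely new feature is the curvature term $(d-1)/r$: it is neutralized near the origin by the factor $(r/\rSmallCurv)^{d-1}$ in $T_\rho\psi_0$ (which makes the weight proportional to the radial Lebesgue density on $[0,\rSmallCurv]$ and thereby cancels the curvature contribution there), and on $[\rSmallCurv,+\infty)$ it is simply small, bounded by $(d-1)/\rSmallCurv$.
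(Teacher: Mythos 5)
Your computation follows the paper's own route: the same master identity obtained by combining $\coeffEnZero$ times \cref{ddt_loc_en_stand_fr} with \cref{ddt_loc_L2_stand_fr}, the same use of the pointwise bound \cref{upp_bd_der_T_rho_psi_zero} together with the tailored constants \cref{def_kappa_0_rSmallCurv} in the polarization step, and the same splitting across $\SigmaEscZero(t)$. Your slightly different allocation of Young's inequality (retaining $-\coeffEnZero u_t^2/2$ and ending with $-u_r^2/4$, where the paper consumes the whole $-\coeffEnZero u_t^2$ and keeps $-u_r^2/2$) is immaterial.

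The one place where your bookkeeping does not quite deliver the statement is the reconstruction of the term $-\nuFireZero\,\fff_0(\rho,t)$. You dominate the integrand by $-\nuFireZero$ times the firewall integrand only on the complement of $\SigmaEscZero(t)$; to pass from $-\nuFireZero\int_{\SigmaEscZero(t)^c}$ to $-\nuFireZero\,\fff_0(\rho,t)$ you must additionally absorb
\[
\nuFireZero\int_{\SigmaEscZero(t)}T_\rho\psi_0\Bigl[\coeffEnZero\Bigl(\frac{u_r^2}{2}+V(u)\Bigr)+\frac{u^2}{2}\Bigr]\,dr
\]
into the pollution term. The $V(u)$ and $u^2$ parts are harmless via \cref{hyp_attr_ball}, but the $u_r^2$ part forces you to invoke \cref{bound_u_ut_ck_bis}, and those bounds are only asserted to be finite for the given solution --- they depend on $u_0$, not only on $V$ --- so the $\KFireZero$ you produce is not a $V$-only constant as the lemma requires. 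The paper sidesteps this entirely by adding and subtracting $\nuFireZero\bigl(\coeffEnZero V(u)+u^2/2\bigr)$ over the \emph{whole} half-line: the condition $\nuFireZero\,\coeffEnZero\le1$ in \cref{def_nu_spat_as} lets the retained $-u_r^2/2$ dominate $-\nuFireZero\coeffEnZero u_r^2/2$ everywhere, and the leftover integrand then contains no derivatives of $u$, hence is bounded on $\SigmaEscZero(t)$ by $\max_{\abs{v}\le\Rattinfty}(\cdots)$, a constant depending only on $V$. Your argument is repaired either by performing this global add-and-subtract, or by supplying a separate parabolic-smoothing estimate giving a $V$-only bound on $\norm{u_r(\cdot,t)}_{L^\infty}$ for large times (which the paper does not provide and does not need).
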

\begin{proof}
It follows from expressions \vref{ddt_loc_en_stand_fr,ddt_loc_L2_stand_fr} for the time derivatives of localized energy and $L^2$-functionals that
\[
\begin{aligned}
\partial_t\fff_0(\bar{r},t) = & \int_0^{+\infty} \biggl( T_{\bar{r}}\psi_0 \bigl( - \coeffEnZero \, (u^\dag_t)^2 - u^\dag\cdot\nabla V^\dag(u^\dag) - (u^\dag_r)^2 \bigr) \\
& + \Bigl( \frac{d-1}{r} T_{\bar{r}}\psi_0  - T_{\bar{r}}\psi_0'  \Bigr) \bigl( \coeffEnZero \, u^\dag_t\cdot u^\dag_r + u^\dag\cdot u^\dag_r \bigr) \biggr) \, dr 
\,.
\end{aligned}
\]
Thus, according to the upper bound \cref{upp_bd_der_T_bar_r_psi_zero}, 
\[
\begin{aligned}
\partial_t\fff_0(\bar{r},t) & \le \int_0^{+\infty} T_{\bar{r}}\psi_0 \biggl( - \coeffEnZero \, (u^\dag_t)^2 - u^\dag\cdot\nabla V^\dag(u^\dag) - (u^\dag_r)^2  \\
& + \left( \frac{d-1}{\rSmallCurv} + \kappa_0  \right) \Bigl( \coeffEnZero \, \abs{u^\dag_t\cdot u^\dag_r} + \abs{u^\dag\cdot u^\dag_r} \Bigr) \biggr) \, dr
\,,
\end{aligned}
\]
thus, using the inequalities
\[
\begin{aligned}
\left( \frac{d-1}{\rSmallCurv} + \kappa_0  \right) \abs{u^\dag_t\cdot u^\dag_r} &\le (u^\dag_t)^2 + \frac{1}{4}\left( \frac{d-1}{\rSmallCurv} + \kappa_0  \right)^2 (u^\dag_r)^2 \,, \\
\text{and}\quad
\abs{u^\dag_t\cdot u^\dag_r} &\le \frac{1}{4}(u^\dag_r)^2 + (u^\dag)^2 
\,,
\end{aligned}
\]
it follows that
\[
\begin{aligned}
\partial_t\fff_0(\bar{r},t)\le & \int_0^{+\infty} T_{\bar{r}}\psi_0\Biggl[ \biggl( \frac{\coeffEnZero}{4}\Bigl( \frac{d-1}{\rSmallCurv} + \kappa_0 \Bigr)^2 + \frac{1}{4}\Bigl( \frac{d-1}{\rSmallCurv} + \kappa_0 \Bigr)-1\biggr)(u^\dag_r)^2 \\
& - u^\dag\cdot\nabla V^\dag(u^\dag) + \Bigl( \frac{d-1}{\rSmallCurv} + \kappa_0 \Bigr)(u^\dag)^2 \Biggr] \, dr 
\,,
\end{aligned}
\]
and according to inequalities \cref{def_kappa_0_rSmallCurv} satisfied by the quantities $\kappa_0$ and $\rSmallCurv$, 
\begin{equation}
\label{dt_fire_spat_as}
\partial_t\fff_0(\bar{r},t) \le \int_0^{+\infty} T_{\bar{r}}\psi_0 \Bigl( -\frac{1}{2}(u^\dag_r)^2 - u^\dag\cdot\nabla V^\dag(u^\dag) + \frac{\eigVmin(m)}{8} (u^\dag)^2 \Bigr) \, dr
\,.
\end{equation}
Let $\nuFZero$ be a positive quantity to be chosen below. It follows from the previous inequality and from the definition \cref{def_fffZero} of $\fff_0(\bar{r},t)$ that
\begin{equation}
\label{dt_fire_spat_as_2}
\begin{aligned}
\partial_t\fff_0(\bar{r},t) + \nuFZero \fff_0(\bar{r},t) \le \int_0^{+\infty} T_{\bar{r}}\psi_0 \biggl[ & -\frac{1}{2}(1-\nuFZero\, \coeffEnZero)(u^\dag_r)^2 - u^\dag\cdot\nabla V^\dag(u^\dag) \\
&+ \nuFZero\coeffEnZero V^\dag(u^\dag) + \Bigl(\frac{\eigVmin(m)}{8} + \frac{\nuFZero}{2}\Bigr)(u^\dag)^2\biggr] \, dr
\,.
\end{aligned}
\end{equation}
In view of this inequality and inequalities \vref{v_nablaV_controls_square_around_loc_min_dag,v_nablaV_controls_pot_around_loc_min_dag}, let us assume that $\nuFZero$ is small enough so that
\begin{equation}
\label{def_nu_spat_as}
\nuFZero \, \coeffEnZero \le 1 
\quad\text{and}\quad
\nuFZero \, \coeffEnZero \le \frac{1}{2} 
\quad\text{and}\quad
\frac{\nuFZero}{2}\le \frac{\eigVmin(m)}{8}
\,;
\end{equation}
the quantity $\nuFZero$ may be chosen as
\begin{equation}
\label{def_nuZero}
\nuFZero = \min\left(\frac{1}{2\coeffEnZero}, \frac{\eigVmin(m)}{4}\right)
\,.
\end{equation}
Then, it follows from \cref{dt_fire_spat_as_2,def_nu_spat_as} that
\begin{equation}
\label{dt_fire_spat_as_3}
\partial_t\fff_0(\bar{r},t) + \nuFZero \fff_0(\bar{r},t) \le \int_0^{+\infty} T_{\bar{r}}\psi_0 \Bigl[  - u^\dag\cdot\nabla V^\dag(u^\dag) + \frac{1}{2} \abs{V^\dag(u^\dag)} + \frac{\eigVmin(m)}{4} (u^\dag)^2\Bigr] \, dr
\,.
\end{equation}
According to \cref{v_nablaV_controls_square_around_loc_min_dag,v_nablaV_controls_pot_around_loc_min_dag}, the integrand of the integral at the right-hand side of this inequality is nonpositive as long as $r$ is \emph{not} in $\SigmaEscZero(t)$. Therefore this inequality still holds if the domain of integration of this integral is changed from $[0,+\infty)$ to $\SigmaEscZero(t)$. 
Besides, observe that, in terms of the ``initial'' potential $V$ and solution $u(r,t)$, the factor of $T_{\bar{x}} \psi$ under the integral of the right-hand side of this last inequality reads
\[
 - (u-m)\cdot \nabla V(u)+\frac{1}{2} \bigl(V(u)-V(m)\bigr)+ \frac{\eigVmin(m)}{4} (u-m)^2
\,.
\]
Thus, if $\KFZero$ denotes the maximum of this expression over all possible values for $m$ and $u$, that is the (positive) quantity
\begin{equation}
\label{def_KFZero}
\KFZero = \max_{v\in\rr^{\dState},\ \abs{v}\le \Rattinfty}\left[ - (u-m)\cdot \nabla V(u)+ \frac{1}{2} \abs{V(u)-V(m)} + \frac{\eigVmin(m)}{4} (u-m)^2\right]
\,,
\end{equation}
then inequality~\cref{dt_fire_0} follows from inequality \cref{dt_fire_spat_as_3} (with the domain of integration of the integral on the right-hand side restricted to $\SigmaEscZero(t)$). Observe that $\KFZero$ depends only on $V$. This finishes the proof of \cref{lem:dt_fire_0}.
\end{proof}
\subsection{Upper bound on the invasion speed}
\label{subsec:upp_bd_inv_vel}
Let
\begin{equation}
\label{def_delta_esc_fire}
\desc(m) = \dEsc(m)\sqrt{\frac{\min\Bigl(\frac{\coeffEnZero}{2},\frac{1}{4}\Bigr)}{\kappa_0 + 1}}
\,.
\end{equation}
As the quantity $\dEsc(m)$ defined in \vref{subsubsec:breakup}, this quantity $\desc(m)$ will provide a way to measure the vicinity of the solution $u$ to the point $m$, this time in terms of the firewall function $\fff_0$. The value chosen for $\desc(m)$ depends only on $V$ and ensures the validity of the following lemma. 
\begin{lemma}[escape/Escape]
\label{lem:esc_Esc}
For all $(\bar{r},t)$ in $[\rSmallCurv,+\infty)\times[0,+\infty)$, the following assertion holds:
\begin{equation}
\label{ineq_esc_Esc}
\fff_0(\bar{r},t) \le \desc(m)^2
\implies
\abs{u^\dag(\bar{r},t)} \le \dEsc(m)
\,.
\end{equation}
\end{lemma}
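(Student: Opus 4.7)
The aim is to show that the pointwise size $\abs{u(\rho,t)}$ is controlled by the weighted $H^1$-type quantity $\fff_0(\rho,t)$, since the firewall already dominates (by the coercivity bound \cref{coerc_fire_sf}) the weighted $L^2$-norm of both $u$ and $u_r$. The plan is to produce a pointwise inequality of the form
\[
u(\rho,t)^2 \le (\kappa_0+1)\int_0^{+\infty} T_\rho\psi_0(r)\bigl(u(r,t)^2 + u_r(r,t)^2\bigr)\,dr,
\]
and then combine with \cref{coerc_fire_sf} and the definition \cref{def_delta_esc_fire} of $\desc$ to conclude.

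For the pointwise bound, set $\Phi(r) = u(r,t)^2$, so that $\Phi'(r) = 2u(r,t)\cdot u_r(r,t)$ and $\abs{\Phi'(r)} \le \Phi(r) + u_r(r,t)^2$. The hypothesis $\rho\ge\rSmallCurv$ puts us in the regime where $T_\rho\psi_0(r)=e^{-\kappa_0(r-\rho)}$ on $[\rho,+\infty)$, so I will look at the solution to the right of $\rho$. Since $\Phi$ is bounded (see \cref{hyp_attr_ball}), an integration by parts gives
\[
\int_\rho^{+\infty} e^{-\kappa_0(r-\rho)}\Phi(r)\,dr = \frac{1}{\kappa_0}\Phi(\rho) + \frac{1}{\kappa_0}\int_\rho^{+\infty} e^{-\kappa_0(r-\rho)}\Phi'(r)\,dr,
\]
with no contribution at $r=+\infty$ from the exponential decay. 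Solving for $\Phi(\rho)$ and applying the inequality $\abs{\Phi'(r)}\le\Phi(r)+u_r(r,t)^2$ yields
\[
\Phi(\rho) \le (\kappa_0+1)\int_\rho^{+\infty} e^{-\kappa_0(r-\rho)}\bigl(u(r,t)^2 + u_r(r,t)^2\bigr)\,dr,
\]
and, bounding the integrand further by extending the domain from $[\rho,+\infty)$ to $[0,+\infty)$ and recognizing $T_\rho\psi_0$, we obtain the claimed bound.

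Combining this with the coercivity \cref{coerc_fire_sf}, one gets
\[
u(\rho,t)^2 \le \frac{\kappa_0+1}{\min(\coeffEnZero/2,\,1/4)}\,\fff_0(\rho,t).
\]
Now the definition \cref{def_delta_esc_fire} of $\desc$ is precisely engineered so that the constant on the right equals $\dEsc^2/\desc^2$; therefore $\fff_0(\rho,t)\le\desc^2$ implies $u(\rho,t)^2\le\dEsc^2$, which is the conclusion. The only step requiring any care is the integration by parts (ensuring the boundary term at infinity vanishes, which follows from the a priori $L^\infty$-bound \cref{hyp_attr_ball}); otherwise the proof is a direct computation tailored to the constants that were fixed earlier.
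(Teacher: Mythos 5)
Your proof is correct and follows essentially the same route as the paper: both derive the pointwise bound $u(\rho,t)^2 \le (\kappa_0+1)\int_0^{+\infty} T_\rho\psi_0(r)\bigl(u(r,t)^2+u_r(r,t)^2\bigr)\,dr$ by exploiting the exponential decay of the weight to the right of $\rho$ (your integration by parts is just a rearrangement of the paper's application of the fundamental theorem of calculus to $r\mapsto T_\rho\psi_0(r)u(r,t)^2$), and then conclude via the coercivity inequality \cref{coerc_fire_sf} and the choice \cref{def_delta_esc_fire} of $\desc$. Nothing further is needed.
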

\begin{proof}
Let $v$ be a function $Y$, and assume in addition that $v$ is of class $\ccc^1$ and that its derivative is uniformly bounded on $[0,+\infty)$. Then, for all $\bar{r}$ in $[\rSmallCurv,+\infty)$,
\[
\begin{aligned}
v(\bar{r})^2 & = T_{\bar{r}}\psi_0(\bar{r}) v(\bar{r})^2 \\
& \le \int_{\bar{r}}^{+\infty} \abs{ \frac{d}{dr}\bigl( T_{\bar{r}}\psi_0(r) v(r)^2 \bigr) } \, dr \\
& \le \int_{\bar{r}}^{+\infty} \bigl( \abs{T_{\bar{r}}\psi_0'(r)} \,  v(r)^2  + 2 T_{\bar{r}}\psi_0(r) \, v(r) \cdot v'(r) \bigr) \, dr \\
& \le \int_{\bar{r}}^{+\infty} T_{\bar{r}}\psi_0(r) \bigl( (\kappa_0 + 1) v(r)^2 + v'(r)^2 \bigr) \, dr \\
& \le (\kappa_0 + 1) \int_0^{+\infty} T_{\bar{r}}\psi_0(r) \bigl( v(r)^2 + v'(r)^2 \bigr) \, dr 
\,.
\end{aligned}
\]
Thus it follows from inequality \cref{coerc_fire_sf} on the coercivity of $\fff_0(\cdot,\cdot)$ that, for all $\bar{r}$ in $[\rSmallCurv,+\infty)$ and $t$ in $[0,+\infty)$, 
\[
u^\dag(\bar{r},t)^2 \le \frac{\kappa_0 + 1}{\min\Bigl(\frac{\coeffEnZero}{2},\frac{1}{4}\Bigr)} \fff_0(\bar{r},t)
\,,
\]
and this ensures the validity of implication \cref{ineq_esc_Esc} with the value of $\desc(m)$ chosen in definition \cref{def_delta_esc_fire}.
\end{proof}
Let $L$ be a positive quantity, large enough so that
\[
2 \KFZero \frac{\exp(-\kappa_0 L)}{\kappa_0} \le \nuFZero \frac{\desc(m)^2}{8}
\,,
\quad\text{namely}\quad
L = \frac{1}{\kappa_0}\log\Bigl(\frac{16 \,\KFZero}{\nuFZero\, \desc(m)^2\, \kappa_0}\Bigr)
\]
(this quantity depends only on $V$), 
let $\hullnoesc:\rr\to\rr\cup\{+\infty\}$ (``no-escape hull'') be the function defined as
\begin{equation}
\label{def_h_noesc}
\hullnoesc(\rho) = 
\left\{
\begin{aligned}
& +\infty & \quad\text{for}\quad & \rho<0 \,, \\
& \frac{\desc(m)^2}{2}\Bigl(1-\frac{\rho}{2\,L}\Bigr) & \quad\text{for}\quad & 0\le \rho\le L \,, \\
& \frac{\desc(m)^2}{4} & \quad\text{for}\quad & \rho\ge L \,,
\end{aligned}
\right.
\end{equation}
\begin{figure}[!htbp]
	\centering
    \includegraphics[width=0.6\textwidth]{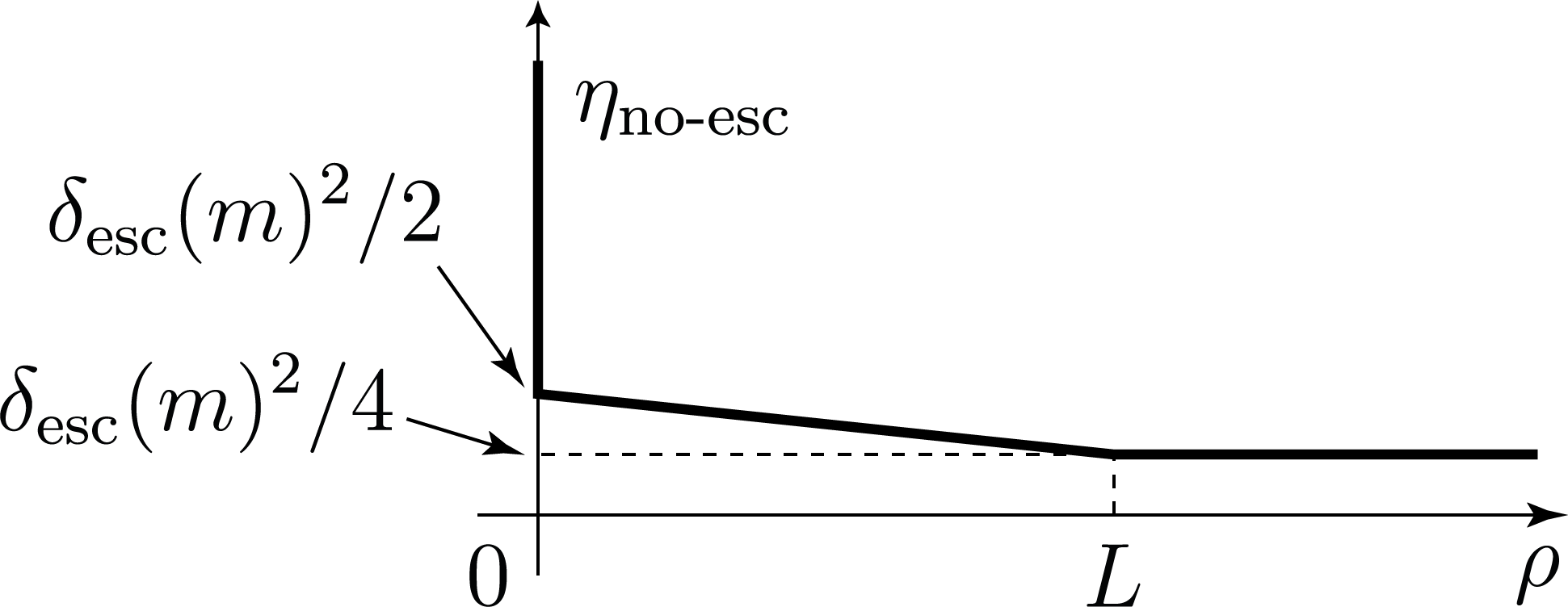}
    \caption{Graph of the hull function $\hullnoesc$.}
    \label{fig:graph_hull}
\end{figure}
see \cref{fig:graph_hull}; and let  $\cnoesc$ (``no-escape speed'') be a positive quantity, large enough so that
\[
\cnoesc \frac{\desc(m)^2}{4L} \ge 2 \frac{\KFZero}{\kappa_0}
\,,
\quad\text{namely}\quad
\cnoesc = \frac{8\,\KFZero \, L}{\kappa_0 \, \desc(m)^2}
\]
(this quantity depends only on $V$ and $m$). 
The following lemma, illustrated by \cref{fig:hull_invasion}, is a variant of \cite[\GlobalRelaxationLemBoundInvasionSpeed]{Risler_globalRelaxation_2016}. 
\begin{figure}[!htbp]
\centering
\includegraphics[width=\textwidth]{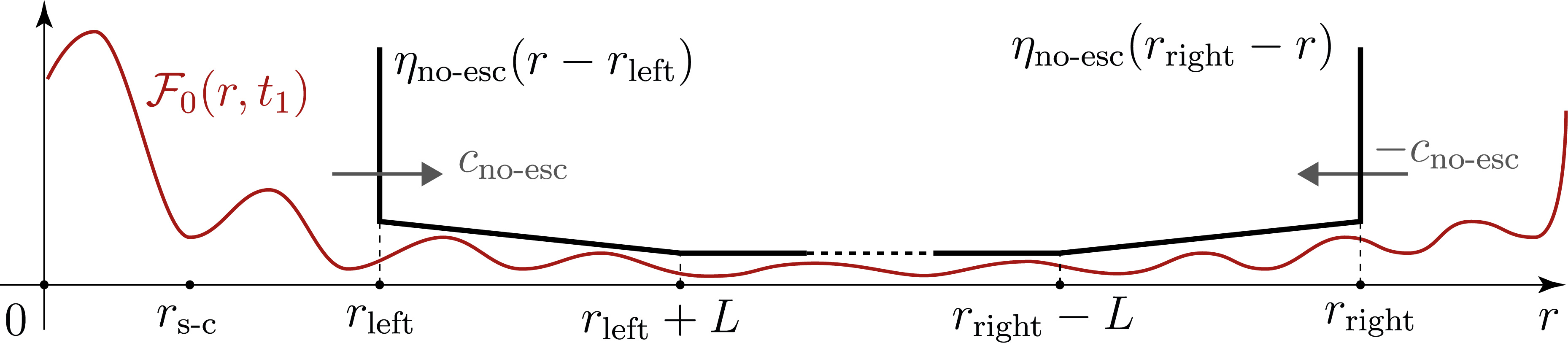}
\caption{Illustration of \cref{lem:inv}; if the firewall function is below the maximum of two mirror hulls at a certain time $t_0$ and if these two hulls travel at opposite speeds $\pm\cnoesc$, then the firewall will remain below the maximum of those travelling hulls in the future (note that after they cross this maximum equals $+\infty$ thus the assertion of being ``below'' is empty).}
\label{fig:hull_invasion}
\end{figure}
\begin{lemma}[bound on invasion speed]
\label{lem:inv}
For every ordered pair $(\rLeft,\rRight)$ of points in the interval $[\rSmallCurv,+\infty)$ and every nonnegative time $t_1$, if
\[
\fff_0(r,t_1) \le \max\bigl( \hullnoesc(r-\rLeft) , \hullnoesc(\rRight - r) \bigr)
\quad\text{for all } r \text{ in }[\rSmallCurv,+\infty) 
\,,
\]
then, for every time $t$ greater than or equal to $t_1$ and all $r$ in $[\rSmallCurv,+\infty)$, 
\[
\fff_0(r,t) \le \max\Bigl( \hullnoesc\bigl(\rLeft-\cnoesc\, (t-t_1)\bigr), \hullnoesc\bigl(\rRight + \cnoesc\, (t-t_1)- r\bigr) \Bigr)
\,.
\]
\end{lemma}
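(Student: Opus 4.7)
The plan is a contradiction argument based on a first-contact time and on a dissipation/pollution balance. Introduce the moving centres $r_L(t) = r_{\mathrm{left}} - \cnoesc(t - t_0)$ and $r_R(t) = r_{\mathrm{right}} + \cnoesc(t - t_0)$, the moving hulls $h_L(\rho, t) = \hullnoesc(\rho - r_L(t))$ and $h_R(\rho, t) = \hullnoesc(r_R(t) - \rho)$, and their maximum $H(\rho, t) = \max(h_L(\rho, t), h_R(\rho, t))$. To propagate the bound $\fff_0(\cdot, t_0) \le H(\cdot, t_0)$ to all later times, I would first replace $H$ by $H + \varepsilon$ for a small $\varepsilon > 0$ to gain a strict initial inequality, then assume for contradiction that there is a first time $t^* > t_0$ and a point $\rho^* \ge \rSmallCurv$ at which $\fff_0(\rho^*, t^*) = H(\rho^*, t^*) + \varepsilon$, and aim to show $\partial_t \fff_0(\rho^*, t^*) < \partial_t H(\rho^*, t^*)$, contradicting the opposite inequality that must hold at any such first-contact point. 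Letting $\varepsilon \to 0$ then yields the claim.

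The crucial step is to invoke the escape/Escape \cref{lem:esc_Esc} to localise the Escape set at time $t^*$. Since the bound $\fff_0 \le H + \varepsilon$ still holds on $[\rSmallCurv, +\infty)$ and since $H \le \desc^2/2$ on $[r_L(t^*), r_R(t^*)]$, one has $\fff_0(\cdot, t^*) < \desc^2$ throughout this interval for $\varepsilon$ small enough, so \cref{lem:esc_Esc} forces
\[
\SigmaEscZero(t^*) \subseteq \bigl[0, r_L(t^*)\bigr) \cup \bigl(r_R(t^*), +\infty\bigr).
\]
Using the exponential decay $T_{\rho^*}\psi_0(r) \le e^{-\kappa_0|r - \rho^*|}$, the pollution integral in \cref{lem:dt_fire_0} at $\rho = \rho^*$ is then controlled by $\frac{\KFireZero}{\kappa_0}\bigl(e^{-\kappa_0(\rho^* - r_L(t^*))} + e^{-\kappa_0(r_R(t^*) - \rho^*)}\bigr)$. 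Combining this pollution bound with the dissipation $-\nuFireZero \fff_0(\rho^*, t^*)$ and with the explicit values of $\partial_t H$ at $\rho^*$ (zero on the flat plateau; equal to $\mp \cnoesc \desc^2/(4L)$ on the sloped segments of $h_L$ or $h_R$), the defining inequalities for $L$ (namely $2\KFireZero e^{-\kappa_0 L}/\kappa_0 \le \nuFireZero \desc^2/8$) and for $\cnoesc$ (namely $\cnoesc \desc^2/(4L) \ge 2\KFireZero/\kappa_0$) are precisely what is needed, case by case, to produce the desired strict inequality $\partial_t \fff_0(\rho^*, t^*) < \partial_t H(\rho^*, t^*)$.

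The main technical obstacle I expect is the non-smoothness of $H$: it is the maximum of two piecewise-affine hulls, with corners both within each hull (at the junction between the sloped segment and the flat plateau) and along the space-time curve where $h_L = h_R$. The standard remedy is to work with upper Dini derivatives at the first-contact point, or equivalently to approximate $\hullnoesc$ by a $\mathcal{C}^1$ upper bound, prove the comparison, and pass to the limit. A secondary concern is ensuring the failure point $\rho^*$ lies in the relevant compact set: outside $[\max(\rSmallCurv, r_L(t^*)), r_R(t^*)]$ one of the two hulls equals $+\infty$ and the bound is trivial, so $\rho^*$ must belong to this compact interval, after which existence of a first-contact time follows from the continuity of $\fff_0$ granted by the smoothing estimate \cref{bound_u_ut_ck_bis}.
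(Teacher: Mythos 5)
Your overall architecture --- strict $\varepsilon$-shift, first-contact point, localisation of $\SigmaEscZero(t^*)$ via \cref{lem:esc_Esc}, exponential bound on the pollution term of \cref{lem:dt_fire_0}, Dini derivatives at the corners of the hull --- is exactly the right one, and it is the mechanism behind the proof in the companion paper to which this lemma is deferred. But there is a fatal error in the orientation of the moving hulls. You take $r_L(t)=r_{\mathrm{left}}-\cnoesc(t-t_0)$ and $r_R(t)=r_{\mathrm{right}}+\cnoesc(t-t_0)$, so that the window on which the bound is nontrivial \emph{grows}, and, as you yourself record, $\partial_t H=-\cnoesc\desc^2/(4L)$ on the sloped segments. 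At a first-contact point $\rho^*$ lying on such a segment, i.e.\ within distance $L$ of a wall, the comparison requires $\partial_t\fff_0(\rho^*,t^*)<-\cnoesc\desc^2/(4L)\le-2\KFireZero/\kappa_0$, whereas \cref{lem:dt_fire_0} only yields $\partial_t\fff_0\le-\nuFireZero\fff_0+\KFireZero/\kappa_0+o(1)$ there (the pollution coming from the unprotected side of the nearby wall is of order $+\KFireZero/\kappa_0$, not small). No choice of $L$ or $\cnoesc$ rescues this; indeed your version of the conclusion is false: for a solution resembling a front invading $m$ at some positive speed, the escaped region enters your expanding window and drives $\fff_0$ above $\desc^2/2$ there.

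The statement as printed is admittedly garbled (the variable $r$ is missing from the first $\hullnoesc$ and the sign in the second is off), but the figure caption (``opposite speeds $\pm\cnoesc$\,\ldots\ after they cross this maximum equals $+\infty$'') and the way the lemma is used to derive \cref{control_escape} ($\resc(t+s)\le\resc(t)+\cnoesc\,s$) force the opposite orientation: the walls move \emph{towards each other}, $r_L(t)=r_{\mathrm{left}}+\cnoesc(t-t_0)$ and $r_R(t)=r_{\mathrm{right}}-\cnoesc(t-t_0)$, so the protected window loses ground at speed $\cnoesc$ on each side and the assertion becomes vacuous once the walls cross. With that orientation each hull is \emph{nondecreasing} in time, with $\partial_t H\ge+\cnoesc\desc^2/(4L)\ge 2\KFireZero/\kappa_0$ on the sloped segments; this dominates the pollution $\le\KFireZero\kappa_0^{-1}\bigl(e^{-\kappa_0(\rho^*-r_L)}+e^{-\kappa_0(r_R-\rho^*)}\bigr)\le 2\KFireZero/\kappa_0$ there, strictness coming from $-\nuFireZero\fff_0(\rho^*,t^*)<0$, while on the plateau (distance $\ge L$ from both walls) the choice of $L$ makes the pollution $\le\nuFireZero\desc^2/8<\nuFireZero\fff_0(\rho^*,t^*)$. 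That is the case-by-case verification your two defining inequalities are actually designed for. One further small point: \cref{lem:esc_Esc} only applies for $\rho\ge\rSmallCurv$, so $[0,\rSmallCurv)$ must be kept inside the ``polluting'' set; this is harmless with the correct orientation because $r_L(t)\ge r_{\mathrm{left}}\ge\rSmallCurv$, but with yours $r_L(t)$ eventually drops below $\rSmallCurv$ and even below $0$.
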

\begin{proof}
See \cite[\GlobalRelaxationLemBoundInvasionSpeed]{Risler_globalRelaxation_2016}. 
\end{proof}
\subsection{Set-up for the proof, 2: escape point and associated speeds}
\label{subsec:inv_cv_set_pf_cont}
According to hypothesis \textup{(\hyperlink{hypHom}{\hypHomRef})} and to the bounds \cref{bound_u_ut_ck_bis} on the solution, it may be assumed, up to changing the origin of time, that, for all $t$ in $[0,+\infty)$ and for all $r$ in $[\rSmallCurv,+\infty)$,
\begin{equation}
\label{hyp_for_def_resc}
\begin{aligned}
\rSmallCurv &\le \rHom(t) - 1 \,, \\
\text{and}\quad
\fff_0(r,t) &\le \max\biggl( \hullnoesc\Bigl( r-\bigl( \rHom(t)-1\bigr) \Bigr) , \hullnoesc \bigl( \rHom(t) - r \bigr) \biggr)
\,.
\end{aligned}
\end{equation}
As a consequence, for all $t$ in $[0,+\infty)$, the set
\[
\begin{aligned}
\IHom(t) = \Bigl\{ & r_{\ell} \in [\rSmallCurv , \rHom(t)] : \text{ for all } r \text{ in } [\rSmallCurv , +\infty) \,, 
\\
& \fff_0(r,t) \le \max\Bigl( \hullnoesc( r-r_{\ell}) , \hullnoesc \bigl( \rHom(t) - r \bigr) \Bigr)
\Bigr\}
\end{aligned}
\]
is a nonempty interval (containing $[\rHom(t)-1,\rHom(t)]$), see \cref{fig:def_resc}. 
\begin{figure}[!htbp]
	\centering
    \includegraphics[width=\textwidth]{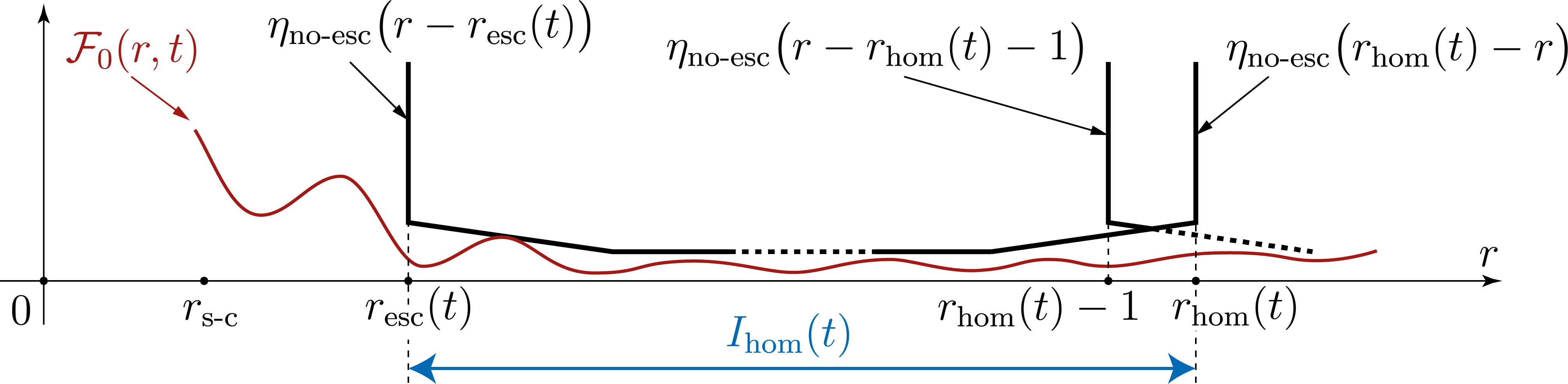}
    \caption{Interval $\IHom(t)$ and definition of $\resc(t)$.}
    \label{fig:def_resc}
\end{figure}
For all $t$ in $[0,+\infty)$, let 
\begin{equation}
\label{def_resc}
\resc(t) = \inf \bigl( \IHom(t) \bigr) 
\quad
\text{(thus } \resc(t)\in[\rSmallCurv , \rHom(t)-1] \text{ ).}
\end{equation}
Somehow like $\rEsc(t)$, this point represents the first point at the left of $\rHom(t)$ where the solution $u$ (respectively $u^\dag$) ``escapes'' (in a sense defined by the firewall function $\fff_0$ and the no-escape hull $\hullnoesc$) at a certain distance from $m$ (respectively from $0_{\rr^{\dState}}$) --- except if $\IHom(t)$ is the whole interval $[\rSmallCurv,\rHom(t)]$, in this case this ``escape'' does not occur. In the following, this point $\resc(t)$ will be called the ``escape point'' (by contrast with the ``Escape point'' $\rEsc(t)$ defined before). According to the ``hull inequality'' \cref{hyp_for_def_resc} and \cref{lem:esc_Esc} (``escape/Escape''), for all $t$ in $[0,+\infty)$,
\begin{equation}
\label{rEsc_resc_rHom}
\rEsc(t) \le \resc(t) \le \rHom(t)-1 
\quad\text{and}\quad
\SigmaEscZero(t) \cap [\rEsc(t),\rHom(t)] = \emptyset
\,,
\end{equation}
and, according to hypothesis \textup{(\hyperlink{hypHom}{\hypHomRef})} and to the bounds \cref{bound_u_ut_ck_bis} on the solution, 
\begin{equation}
\label{rHom_minus_resc}
\rHom(t) - \resc(t) \to +\infty
\quad\text{as}\quad
t\to +\infty
\,.
\end{equation}
The big advantage of $\resc(\cdot)$ with respect to $\rEsc(\cdot)$ is that, according to \cref{lem:inv} (``bound on invasion speed''), the growth of $\resc(\cdot)$ is more under control. More precisely, according to this lemma, for all nonnegative quantities $t$ and $s$, 
\begin{equation}
\label{control_escape}
\resc(t+s)\le \resc(t) + \cnoesc \, s
\,.
\end{equation}
For every $s$ in $[0,+\infty)$, let us consider the ``upper and lower bounds of the variations of $\resc(\cdot)$ over all time intervals of length $s$'':
\begin{figure}[!htbp]
	\centering
    \includegraphics[width=\textwidth]{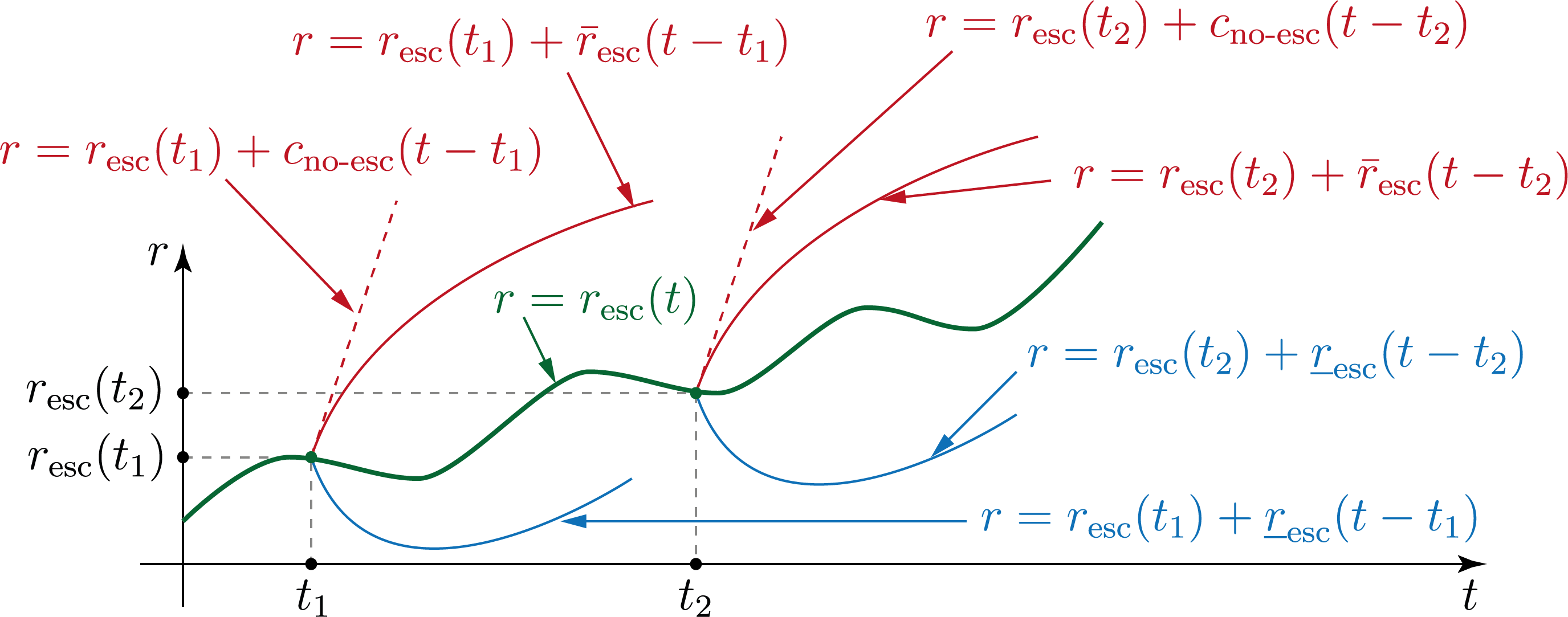}
    \caption{Illustration of the bounds \cref{various_bounds_r_esc}.}
    \label{fig:def_bar_underbar_resc}
\end{figure}
\[
\barresc(s) = \sup_{t\in[0,+\infty)} \resc(t+s) - \resc(t)
\quad\text{and}\quad
\underbarresc(s) = \inf_{t\in[0,+\infty)} \resc(t+s) - \resc(t)
\,,
\]
see \cref{fig:def_bar_underbar_resc}. According to these definitions and to inequality \cref{control_escape} above, for all $t$ and $s$ in $[0,+\infty)$,
\begin{equation}
\label{various_bounds_r_esc}
-\infty\le\underbarresc(s)\le \resc(t+s)-\resc(t)\le \barresc(s) \le \cnoesc\, s
\,.
\end{equation}
Let us consider the four limit mean speeds:
\[
\cescinf = \liminf_{t\to+\infty}\frac{\resc(t)}{t}
\quad\text{and}\quad
\cescsup = \limsup_{t\to+\infty}\frac{\resc(t)}{t}
\]
and
\[
\underbarcescinf = \liminf_{s\to+\infty}\frac{\underbarresc(s)}{s}
\quad\text{and}\quad
\barcescsup = \limsup_{s\to+\infty}\frac{\barresc(s)}{s}
\,.
\]
The following inequalities follow from these definitions and from hypothesis \textup{(\hyperlink{hypInv}{\hypInvRef})}:
\[
-\infty \le \underbarcescinf \le \cescinf \le \cescsup \le \barcescsup \le \cnoesc
\quad\text{and}\quad
0 < \cEsc \le \cescsup
\,.
\]
The four limit mean speeds defined just above will turn out to be equal. The proof of this equality is based of the ``relaxation scheme'' set up in the next \namecref{subsec:relax_sch_tr_fr}. 
\subsection{Relaxation scheme in a travelling frame}
\label{subsec:relax_sch_tr_fr}
The aim of this \namecref{subsec:relax_sch_tr_fr} is to set up an appropriate relaxation scheme in a travelling frame. This means defining an appropriate localized energy and controlling the ``flux'' terms occurring in the time derivative of this localized energy. The considerations made in \vref{subsec:1rst_ord} will be put in practice. 
\subsubsection{Preliminary definitions}
\label{subsubsec:def_trav_f}
Let us introduce the following real quantities that will play the role of ``parameters'' for the relaxation scheme below (see \cref{fig:trav_fr}):
\begin{figure}[!htbp]
\centering
\includegraphics[width=.8\textwidth]{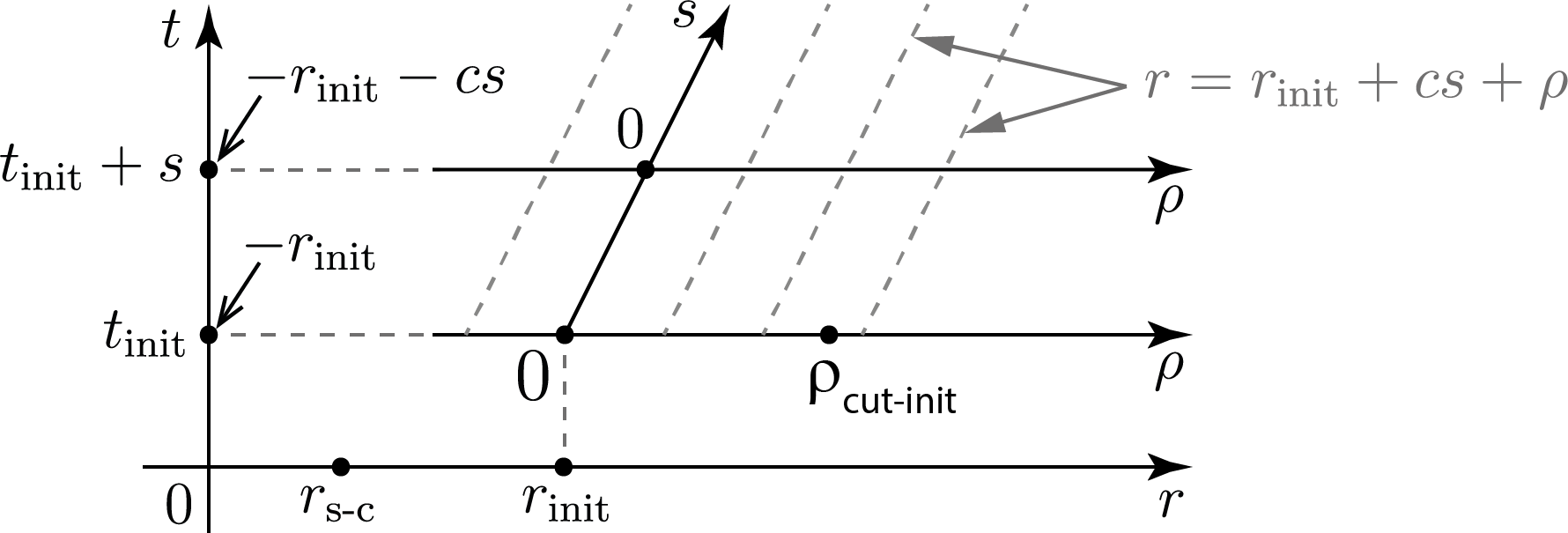}
\caption{Space coordinate $\rho$ and time coordinate $s$ in the travelling frame, and parameters $\tInit$ and $\rInit$ and $c$ and $\initCut$.}
\label{fig:trav_fr}
\end{figure}
\begin{itemize}
\item the ``initial time'' $\tInit$ of the time interval of the relaxation;
\item the position $\rInit$ of the origin of the travelling frame at initial time $t=\tInit$ (in practice it will be chosen equal to $\resc(\tInit)$);
\item the speed $c$ of the travelling frame;
\item a quantity $\initCut$ that will be the the position of the maximum point of the weight function $\rho\mapsto\chi(\rho,\tInit)$ localizing energy at initial time $t=\tInit$ (this weight function is defined below); the subscript ``cut'' refers to the fact that this weight function displays a kind of ``cut-off'' on the interval between this maximum point and $+\infty$. Thus the maximum point is in some sense the point ``where the cut-off begins''. 
\end{itemize}
Let us make on these parameters the following hypotheses:
\begin{equation}
\label{hyp_param_relax_sch}
0\le \tInit
\quad\text{and}\quad
0 < c \le \cnoesc
\quad\text{and}\quad
0 \le \initCut
\quad\text{and}\quad
\rInit \ge \rSmallCurv
\,.
\end{equation}
For all $\rho$ in $[-\rInit - cs,+\infty)$ and $s$ in $[0,+\infty)$, let
\[
v(\rho,s) = u^\dag(r,t)
\quad\text{where}\quad
r = \rInit + cs + \rho
\quad\text{and}\quad
t = \tInit + s
\,.
\]
This function satisfies the differential system
\begin{equation}
\label{syst_rad_sym_tf}
v_s - c v_\rho = -\nabla V^\dag(v) + \frac{d-1}{\rInit + cs + \rho} v_\rho + v_{\rho\rho}
\,.
\end{equation}
Let $\kappa$ (rate of decrease of the weight functions), $\cCut$ (speed of the cutoff point in the travelling frame), and $\coeffEn$ (coefficient of energy in the ``firewall'' function) be three positive quantities, small enough so that
\begin{equation}
\label{constraints_quantities_relax_sch}
\begin{aligned}
\coeffEn\Bigl(\frac{\cCut(c+\kappa)}{2} + \frac{(c+\kappa+\frac{1}{2})^2}{4}\Bigr) &\le \frac{1}{4}
\quad\text{and}\quad
\coeffEn \cCut (c+\kappa)  \le \frac{1}{4}\\
\text{and}\quad
\frac{(\cCut+\kappa)(c+\kappa)}{2} &\le \frac{\eigVmin(m)}{16}
\end{aligned}
\end{equation}
(these conditions will be used to prove inequality \vref{ds_fire_2}), and so that
\begin{equation}
\label{coeffEn_smaller_than_coeffEnZero}
\coeffEn \le \coeffEnZero
\,.
\end{equation}
These quantities may be chosen as follows (first choose $\kappa$ and $\cCut$ so that the third inequality of \cref{constraints_quantities_relax_sch} be fulfilled, and then choose $\coeffEn$ according to the first two inequalities of \cref{constraints_quantities_relax_sch} and to \cref{coeffEn_smaller_than_coeffEnZero}):
\[
\begin{aligned}
\kappa &= \min\Bigl( \sqrt{\frac{\eigVmin(m)}{32}} , \frac{\eigVmin(m)}{32\cnoesc} \Bigr) 
\quad\text{and}\quad
\cCut = \frac{\eigVmin(m)}{16(\cnoesc+\kappa)} \,, \\
\text{and}\quad\coeffEn &= \min\Bigl(\frac{1}{2\cCut(\cnoesc + \kappa) + (\cnoesc + \kappa+1/2)^2} , \frac{1}{4\cCut(\cnoesc + \kappa)},  \coeffEnZero\Bigr) 
\,.
\end{aligned}
\]
Conditions \cref{constraints_quantities_relax_sch,coeffEn_smaller_than_coeffEnZero} are very similar to those stated in \cite{Risler_globalBehaviour_2016}, although slightly more stringent due to the curvature terms. 
\subsubsection{Localized energy}
\label{subsubsec:def_loc_en}
For every nonnegative quantity $s$, let us introduce the intervals
\[
\begin{aligned}
\iLeft(s) &= \bigl[- \rInit - cs , - \rInit - cs + \rSmallCurv \bigr] \,, \\
\text{and}\qquad
\iMain(s) &= \bigl[ - \rInit - cs + \rSmallCurv , \initCut + \cCut s\bigr] \,, \\
\text{and}\qquad
\iRight(s) &= [ \initCut + \cCut s , +\infty) \,, \\
\text{and}\qquad
\iTot(s) &= \bigl[- \rInit - cs , +\infty\bigr) = \iLeft(s) \cup \iMain(s) \cup \iRight(s) 
\,,
\end{aligned}
\]
see \cref{fig:chi_psi}.
\begin{figure}[!htbp]
	\centering
    \includegraphics[width=\textwidth]{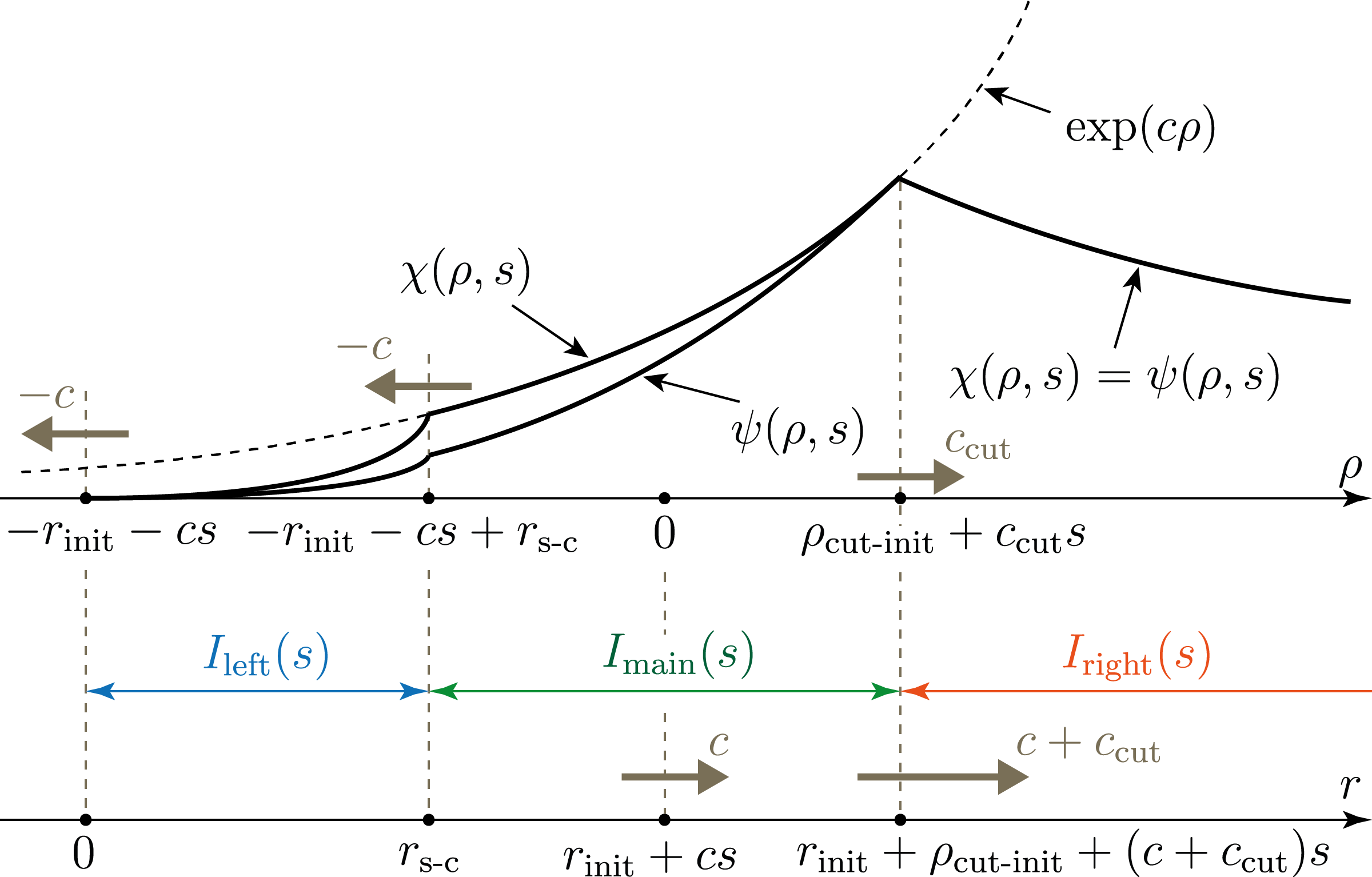}
    \caption{Intervals $\iLeft(s)$ and $\iMain(s)$ and $\iRight(s)$ and graphs of the weight functions $\chi(y,s)$ and $\psi(y,s)$.}
    \label{fig:chi_psi}
\end{figure}
Observe that, since according to hypotheses \cref{hyp_param_relax_sch} the quantity $\rInit$ is greater than or equal to $\rSmallCurv$, the interval $\iMain(s)$ is nonempty. 
Let us introduce the function $\chi(\rho,s)$ (weight function for the localized energy) defined as
\[
\chi(\rho,s) =
\left\{
\begin{aligned}
& \exp(c\rho)  \Bigl( \frac{ \rInit + cs + \rho }{\rSmallCurv} \Bigr)^{d-1} 
& & \text{if}\quad\rho \in \iLeft(s) \,, \\
& \exp(c\rho) 
& & \text{if}\quad\rho \in \iMain(s) \,, \\
& \exp\Bigl[c(\initCut + \cCut s) - \kappa\bigl(\rho-(\initCut + \cCut s)\bigr)\Bigr]
& & \text{if}\quad\rho \in \iRight(s) \,. \\
\end{aligned}
\right.
\]
For all $s$ in $[0,+\infty)$, let us define the ``energy function'' $\eee(s)$ by
\[
\eee(s) = \int_{\iTot(s)} \chi(\rho,s)\Bigl( \frac{1}{2}v_\rho(\rho,s)^2 + V^\dag\bigl(v(\rho,s)\bigr) \Bigr) \, d\rho
\,.
\]
\subsubsection{Time derivative of the localized energy}
\label{subsubsec:der_loc_en}
For every nonnegative quantity $s$, let 
\[
\ddd(s) = \int_{\iTot(s)} \chi(\rho,s)\, v_s(\rho,s)^2 \, d\rho
\,.
\]
The aim of this \namecref{subsubsec:der_loc_en} is to prove the following lemma. 
\begin{lemma}[upper bound on time derivative of energy, first version]
\label{lem:ds_en_before_fire_last}
There exist positive quantities $\KELeft$ and $\KEMain$, depending only on $V$ and $d$, such that, for every nonnegative quantity $s$, the following inequality holds:
\begin{equation}
\label{ds_en_before_fire_last}
\begin{aligned}
\eee'(s) \le & -\frac{1}{2}\ddd(s) + \KELeft \exp \bigl( - c \rInit \bigr) \\
& + \KEMain \biggl( \exp\Bigl(-\frac{c\rInit}{2} \Bigr)+ \frac{2}{\rInit} \exp\bigl( c(\initCut + \cCut s)\bigr)\biggr) \\
& + \int_{\iRight(s)} \chi \biggl( \cCut (c+\kappa) \Bigl( \frac{1}{2}v_\rho^2 + V^\dag(v) \Bigr) + \frac{(1/2 + c + \kappa)^2}{2}v_\rho^2  \biggr) \, d\rho 
\,.
\end{aligned}
\end{equation}
\end{lemma}
\begin{proof}
\renewcommand{\qedsymbol}{} 
It follows from expression \vref{ddt_loc_en_trav_fr} for the derivative of a localized energy that
\[
\begin{aligned}
\eee'(s) = & - \ddd(s) + \int_{\iTot(s)} \chi_s \Bigl( \frac{1}{2}v_\rho^2 + V^\dag(v) \Bigr) \, d\rho \\
& + \int_{\iTot(s)} \Bigl( \frac{d-1}{\rInit + cs + \rho} \chi+c\chi-\chi_\rho \Bigr) v_s \cdot v_\rho \, d\rho 
\,.
\end{aligned}
\]
It follows from the definition of $\chi$ that, for every real quantity $\rho$, 
\[
\chi_s(\rho,s) =
\left\{
\begin{aligned}
& \frac{c(d-1)}{\rInit + cs + \rho} \chi(\rho,s)
& & \text{if}\quad\rho \in \iLeft(s) \,, \\
& 0
& & \text{if}\quad\rho \in \iMain(s) \,, \\
& \cCut (c+\kappa) \chi(\rho,s)
& & \text{if}\quad\rho \in \iRight(s) \,, \\
\end{aligned}
\right.
\]
and
\[
\chi_\rho(\rho,s) = \left\{
\begin{aligned}
& c \chi(\rho,s) + \frac{d-1}{\rInit + cs + \rho} \chi(\rho,s)
& & \text{if}\quad\rho \in \iLeft(s) \,, \\
& c \chi(\rho,s)
& & \text{if}\quad\rho \in \iMain(s) \,, \\
& -\kappa \chi(\rho,s) 
& & \text{if}\quad\rho \in \iRight(s) \,. \\
\end{aligned}
\right.
\]
thus
\begin{equation}
\label{pollution_factor_chi}
\begin{aligned}
\frac{d-1}{\rInit + cs + \rho} \chi(\rho,s) + c\chi(\rho,s) & -\chi_\rho(\rho,s)  = \\
& \left\{ \begin{aligned}
& 0 
& & \text{if}\quad\rho \in \iLeft(s) \,, \\
& \frac{d-1}{\rInit + cs + \rho} \chi(\rho,s)
& & \text{if}\quad\rho \in \iMain(s) \,, \\
& \Bigl(c+\kappa + \frac{d-1}{\rInit + cs + \rho} \Bigr) \chi(\rho,s) 
& & \text{if}\quad\rho \in \iRight(s) \,. \\
\end{aligned}
\right.
\end{aligned}
\end{equation}
As a consequence,
\[
\begin{aligned}
\eee'(s) & = - \ddd(s) \\
& + \int_{\iLeft(s)} \chi \frac{c(d-1)}{\rInit + cs + \rho} \Bigl( \frac{1}{2}v_\rho^2 + V^\dag(v)\Bigr) \, d\rho \\
& + \int_{\iMain(s)} \chi \frac{d-1}{\rInit + cs + \rho} v_s \cdot v_\rho \, d\rho \\
& + \int_{\iRight(s)} \chi \biggl( \cCut (c+\kappa) \Bigl( \frac{1}{2}v_\rho^2 + V^\dag(v) \Bigr) + \Bigl( \frac{d-1}{\rInit + cs + \rho} + c + \kappa \Bigr) v_s \cdot v_\rho \biggr) \, d\rho 
\,. 
\end{aligned}
\]
Polarizing the scalar products $v_s \cdot v_\rho$, it follows that
\begin{equation}
\label{ds_en_before_fire}
\begin{aligned}
\eee'(s) \le & - \frac{1}{2}\ddd(s) \\
& + \int_{\iLeft(s)} \chi \frac{c(d-1)}{\rInit + cs + \rho}  \Bigl( \frac{1}{2}v_\rho^2 + V^\dag(v)\Bigr)  \, d\rho \\
& + \int_{\iMain(s)} \chi \frac{1}{2}\Bigl(\frac{d-1}{\rInit + cs + \rho}\Bigr)^2 v_\rho^2  \, d\rho \\
& + \int_{\iRight(s)} \chi \biggl( \cCut (c+\kappa) \Bigl( \frac{1}{2}v_\rho^2 + V^\dag(v) \Bigr) + \frac{1}{2}\Bigl( \frac{d-1}{\rInit + cs + \rho} + c + \kappa \Bigr)^2 v_\rho^2  \biggr) \, d\rho 
\,.
\end{aligned}
\end{equation}
Let us make a brief comment on this inequality, in comparison with the (simpler) case $d=1$ (see \cite[\GlobalBehaviourSubSubSecTimeDerivativeLocEn]{Risler_globalBehaviour_2016}). 

Observe that the last term of this inequality (the integral over $\iRight(s)$) is very similar to the $d=1$ case. As in the $d=1$ case, its control will require the definition of a ``firewall function'' that will be defined in the next \cref{subsubsec:def_firewall}. Thus the main novelty with respect to the $d=1$ case is the existence of the two other integrals over $\iLeft(s)$ and $\iMain(s)$ (according to the calculations above, the integral over $\iLeft(s)$ follows from the fact that $\chi_s(\rho,s)$ is positive when $\rho$ belongs to this interval, and the integral over $\iMain(s)$ comes from the curvature term in system \cref{syst_rad_sym_tf}). 

Unfortunately, the firewall function that will be defined in the next \namecref{subsubsec:def_firewall} will be of no help to control these two terms, since the weight function $\psi(\rho,s)$ involved in its definition will have to be chosen much smaller than $\chi(\rho,s)$ on both intervals $\iLeft(s)$ and $\iMain(s)$. As a consequence, these two terms need to be treated separately. The aim of the following two lemmas is to do this job, that is to provide appropriate upper bounds for these two terms (proof \cref{lem:ds_en_before_fire_last} will follow afterwards). The sole required feature of these bounds is that they should be small if the positive quantity $\rInit$ is large. 
\end{proof} 
\begin{lemma}[upper bound for curvature term on $\iLeft(s)$]
\label{lem:bound_ds_en_iLeft}
There exists a positive quantity $\KELeft$, depending only on $V$ and $d$, such that, for every nonnegative quantity $s$, the following estimate holds: 
\begin{equation}
\label{bound_ds_en_iLeft}
\int_{\iLeft(s)} \chi \frac{c(d-1)}{\rInit + cs + \rho}  \Bigl( \frac{1}{2}v_\rho^2 + V^\dag(v)\Bigr)  \, d\rho 
\le 
\KELeft \exp \bigl( - c \rInit \bigr)
\,.
\end{equation}
\end{lemma}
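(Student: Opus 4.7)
The plan is to exploit the fact that the weight $\chi$ on $\iLeft(s)$ has been designed precisely so as to vanish to order $d-1$ at the left endpoint $\rho = -\rInit-cs$, which exactly cancels the singularity $1/(\rInit+cs+\rho)$ in the integrand. Once this cancellation is carried out, the integrand is controlled by a multiple of $\exp(c\rho)$, and the bound $\exp(-c\rInit)$ comes from the fact that the interval $\iLeft(s)$ lies far to the left in the $\rho$ variable (its right endpoint is $-\rInit-cs+\rSmallCurv$, and $cs\ge 0$).

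First I would simplify the product of weights on $\iLeft(s)$. Using the explicit form of $\chi$ on this interval,
\[
\chi(\rho,s)\cdot\frac{c(d-1)}{\rInit+cs+\rho} = \frac{c(d-1)}{\rSmallCurv^{d-1}}\,\exp(c\rho)\,(\rInit+cs+\rho)^{d-2}.
\]
Since $d\ge 2$ and $0\le \rInit+cs+\rho\le\rSmallCurv$ on $\iLeft(s)$, the factor $(\rInit+cs+\rho)^{d-2}$ is bounded by $\rSmallCurv^{d-2}$, yielding the pointwise upper bound
\[
\chi(\rho,s)\cdot\frac{c(d-1)}{\rInit+cs+\rho}\le\frac{c(d-1)}{\rSmallCurv}\exp(c\rho).
\]

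Next I would invoke the a priori bounds. By the attracting ball estimate \cref{hyp_attr_ball} and the $\ccc^{k+1}_\textrm{b}$ bound in \cref{bound_u_ut_ck_bis}, there exists a quantity $M$ depending only on $V$ such that $v_\rho(\rho,s)^2/2+V\bigl(v(\rho,s)\bigr)\le M$ uniformly in $(\rho,s)$. Combining this with the previous bound gives
\[
\int_{\iLeft(s)}\chi\,\frac{c(d-1)}{\rInit+cs+\rho}\Bigl(\frac{v_\rho^2}{2}+V(v)\Bigr)\,d\rho\le\frac{c(d-1)M}{\rSmallCurv}\int_{-\rInit-cs}^{-\rInit-cs+\rSmallCurv}\exp(c\rho)\,d\rho,
\]
and a direct integration, together with $cs\ge 0$, yields
\[
\int_{-\rInit-cs}^{-\rInit-cs+\rSmallCurv}\exp(c\rho)\,d\rho=\frac{\exp(-c\rInit-c^2s)\bigl(\exp(c\rSmallCurv)-1\bigr)}{c}\le\frac{\exp(c\rSmallCurv)}{c}\exp(-c\rInit).
\]

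Putting these two estimates together gives
\[
\int_{\iLeft(s)}\chi\,\frac{c(d-1)}{\rInit+cs+\rho}\Bigl(\frac{v_\rho^2}{2}+V(v)\Bigr)\,d\rho\le\frac{(d-1)M\exp(c\rSmallCurv)}{\rSmallCurv}\exp(-c\rInit).
\]
Since $c\le\cnoesc$ by \cref{hyp_param_relax_sch} and both $\cnoesc$ and $\rSmallCurv$ depend only on $V$, the prefactor is bounded by $\KELeft:=(d-1)M\exp(\cnoesc\rSmallCurv)/\rSmallCurv$, a quantity depending only on $V$ and $d$, which completes the proof. I do not anticipate any real obstacle here: the entire argument rests on the algebraic cancellation between the $(d-1)$-fold zero of $\chi$ at the left endpoint and the curvature singularity, a cancellation that is built into the very definition of $\chi$ on $\iLeft(s)$.
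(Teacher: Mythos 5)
Your proof is correct and follows essentially the same route as the paper's: the same exact cancellation of the $1/(\rInit+cs+\rho)$ singularity against the $(d-1)$-fold vanishing of $\chi$ at the left endpoint, the same bound $(\rInit+cs+\rho)^{d-2}\le\rSmallCurv^{d-2}$ using $d\ge 2$, the same integration of $\exp(c\rho)$ over $\iLeft(s)$ with $cs\ge 0$, and the same use of the a priori bounds and of $c\le\cnoesc$ to make the constant depend only on $V$ and $d$.
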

\begin{proof}[Proof of \cref{lem:bound_ds_en_iLeft}]
For every nonnegative quantity $s$ and every $\rho$ in $\iLeft(s)$, 
\[
\begin{aligned}
\chi(\rho,s) \frac{c(d-1)}{\rInit + cs + \rho} & = \frac{c(d-1)\exp(c\rho)}{\rSmallCurv} \Bigl(\frac{\rInit + cs + \rho}{\rSmallCurv}\Bigr)^{d-2} \\
&\le \frac{c(d-1)\exp(c\rho)}{\rSmallCurv}
\end{aligned}
\]
(this inequality still holds if $d=1$, however recall that for clarity the $d$ equals $1$ case was excluded, thus $d$ is assumed to be not smaller than $2$). Thus,
\[
\begin{aligned}
\int_{\iLeft(s)} \chi \frac{c(d-1)}{\rInit + cs + \rho} \, d\rho & \le \frac{d-1}{\rSmallCurv} \exp\bigl( c(-\rInit - cs + \rSmallCurv\bigr) \\
& \le \Bigl( \frac{d-1}{\rSmallCurv} \exp(c\rSmallCurv)\Bigr) \exp(-c\rInit)
\,,
\end{aligned}
\]
thus inequality \cref{bound_ds_en_iLeft} follows from the bound \cref{hyp_param_relax_sch} on the speed $c$ and the bounds \vref{hyp_attr_ball} for the solution.
\Cref{lem:bound_ds_en_iLeft} is proved.
\end{proof}
Let us make the following additional hypothesis on the parameter $\rInit$:
\begin{equation}
\label{hyp_rInit_larger_twice_rsc}
\rInit \ge 2 \rSmallCurv
\,.
\end{equation}
\begin{lemma}[upper bound for curvature term on $\iMain(s)$]
\label{lem:bound_ds_en_iMain}
There exists a positive quantity $\KEMain$, depending only on $V$ and $d$, such that, for every nonnegative quantity $s$, the following estimate holds: 
\begin{equation}
\label{bound_ds_en_iMain}
\begin{aligned}
\int_{\iMain(s)} \chi \frac{1}{2} \Bigl(\frac{d-1}{\rInit + cs + \rho}\Bigr)^2 & v_\rho^2 \, d\rho
\le \\
\KEMain & \biggl( \exp\Bigl(-\frac{c\rInit}{2} \Bigr)+ \frac{2}{\rInit} \exp\bigl( c(\initCut + \cCut s)\bigr)\biggr)
\,.
\end{aligned}
\end{equation}
\end{lemma}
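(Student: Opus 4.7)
The goal is to bound the $\iMain(s)$-integral using only the explicit form of $\chi$ on this interval and the a priori smoothness of the solution. On $\iMain(s)$ we have $\chi(\rho,s)=\exp(c\rho)$. By the regularity estimates \cref{bound_u_ut_ck_bis} (together with the a priori bound \cref{hyp_attr_ball}), the derivative $u_r$ is uniformly bounded on $[0,+\infty)\times[0,+\infty)$, so there exists a constant $M$ depending only on $V$ such that $v_\rho(\rho,s)^2/2 \le M$ for every $(\rho,s)$. The integral in question is therefore bounded above by
\[
M(d-1)^2 \int_{\iMain(s)} \frac{\exp(c\rho)}{(\rInit + cs + \rho)^2}\, d\rho.
\]

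The plan is to split $\iMain(s)$ at $\rho = -\rInit/2$ into two subintervals and treat each piece with the appropriate one of the two available bounds on $\exp(c\rho)$. Since by hypothesis \cref{hyp_rInit_larger_twice_rsc} we have $\rInit \ge 2\rSmallCurv$, the left endpoint satisfies $-\rInit-cs+\rSmallCurv \le -\rInit/2$, so the left piece $[-\rInit - cs +\rSmallCurv,\,-\rInit/2]$ is well defined; on this piece I would estimate $\exp(c\rho)\le \exp(-c\rInit/2)$, then change variable $r=\rInit+cs+\rho$ to obtain
\[
\int_{-\rInit-cs+\rSmallCurv}^{-\rInit/2}\frac{\exp(c\rho)}{(\rInit+cs+\rho)^2}\,d\rho \;\le\; \exp(-c\rInit/2)\int_{\rSmallCurv}^{\rInit/2+cs}\frac{dr}{r^2}\;\le\;\frac{\exp(-c\rInit/2)}{\rSmallCurv}.
\]
This yields the first term of the stated bound.

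On the right piece $[-\rInit/2,\,\initCut+\cCut s]$ I would instead use $\exp(c\rho) \le \exp\bigl(c(\initCut+\cCut s)\bigr)$ (since $\rho\le \initCut+\cCut s$ there), then again change variable to get
\[
\int_{-\rInit/2}^{\initCut+\cCut s}\frac{\exp(c\rho)}{(\rInit+cs+\rho)^2}\,d\rho
\;\le\;\exp\bigl(c(\initCut+\cCut s)\bigr)\int_{\rInit/2+cs}^{\rInit+cs+\initCut+\cCut s}\frac{dr}{r^2}
\;\le\;\frac{2}{\rInit}\exp\bigl(c(\initCut+\cCut s)\bigr),
\]
which produces the second term. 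Combining the two pieces and absorbing $M$, $(d-1)^2$, and $1/\rSmallCurv$ into a single constant $\KEMain$ depending only on $V$ and $d$ gives \cref{bound_ds_en_iMain}.

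There is no genuine obstacle here: the point is merely that the exponential weight $\exp(c\rho)$ has to be estimated differently on the two halves of $\iMain(s)$ — by its value at the right endpoint of the left half, and by its value at the right endpoint of the right half — because neither bound alone is sharp enough everywhere. The splitting point $-\rInit/2$ is chosen precisely so that on each half the factor $(\rInit+cs+\rho)^{-2}$ is integrable against $dr$ with a tail contribution of order $1/\rSmallCurv$ or $1/\rInit$ respectively. The assumption $\rInit \ge 2\rSmallCurv$ is what makes the splitting legitimate; without it the left subinterval could be empty and the decomposition would need to be reorganised, but the final estimate would be unchanged.
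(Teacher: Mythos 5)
Your proof is correct and follows essentially the same route as the paper: bound $v_\rho^2/2$ uniformly, split the remaining integral of $\exp(c\rho)/(\rInit+cs+\rho)^2$ at the midpoint (the paper cuts at $r=\rInit/2$ in the variable $r=\rInit+cs+\rho$, you cut at $\rho=-\rInit/2$, an immaterial difference), and bound the exponential on each half by its value at the right endpoint, using \cref{hyp_rInit_larger_twice_rsc} to ensure the left half is nonempty. No further comment is needed.
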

\begin{proof}[Proof of \cref{lem:bound_ds_en_iMain}]
Let us introduce the integral:
\[
\begin{aligned}
J &= \int_{\iMain(s)}\frac{\chi}{(\rInit + cs + \rho)^2} \, d\rho \\
& = \int_{-\rInit - cs + \rSmallCurv}^{\initCut + \cCut s} \frac{\exp(c \, \rho)}{(\rInit + cs + \rho)^2} \, d\rho \\
& = \exp \bigl( -c \rInit - c^2s \bigr) \int_{\rSmallCurv}^{\rInit + \initCut + (c+\cCut) s} \frac{\exp(c \, r)}{r^2} \, dr
\,.
\end{aligned}
\]
To bound from above this expression, the integral may be cut into two pieces, namely:
\[
J = \exp \bigl( -c \rInit - c^2s \bigr) \biggl( \int_{\rSmallCurv}^{\rInit/2} \frac{\exp(c \, r)}{r^2} \, dr + \int_{\rInit/2}^{\rInit + \initCut + (c+\cCut) s} \frac{\exp(c \, r)}{r^2} \, dr \biggr) 
\,;
\]
observe that according to hypothesis \cref{hyp_rInit_larger_twice_rsc} the quantity $\rInit/2$ is not smaller than $\rSmallCurv$. Thus, bounding from above the two quantities $\exp(cr)$ in this expression (by replacing the quantity $r$ by the upper bound of the respective integration domain), it follows that
\[
J  \le \frac{\exp\Bigl(-c\rInit/2 - c^2 s\Bigr)}{\rSmallCurv} + \frac{2}{\rInit} \exp\bigl( c(\initCut + \cCut s)\bigr)
\,,
\]
and since according to its definition \vref{def_r_s_c} the quantity $\rSmallCurv$ is not smaller than $1$, it follows that
\[
J \le \exp\Bigl(-\frac{c\rInit}{2} \Bigr)+ \frac{2}{\rInit} \exp\bigl( c(\initCut + \cCut s)\bigr)
\,.
\]
Thus inequality \cref{bound_ds_en_iMain} follows from the bounds \vref{hyp_attr_ball} for the solution. \Cref{lem:bound_ds_en_iMain} is proved.
\end{proof}
\begin{proof}[End of the proof of \cref{lem:ds_en_before_fire_last}]
Observe that, according to the definition \vref{def_r_s_c} of $\rSmallCurv$, the quantity $(d-1)/(\rInit + cs + \rho)$ is not larger than $1/2$ as soon as $\rho$ is in $\iRight(s)$ (actually even in $\iMain(s)$). Thus inequality \cref{ds_en_before_fire_last} of \cref{lem:ds_en_before_fire_last} follows from inequality \cref{ds_en_before_fire} and from \cref{lem:bound_ds_en_iLeft,lem:bound_ds_en_iMain}. \Cref{lem:ds_en_before_fire_last} is proved. 
\end{proof}
\subsubsection{Firewall function} 
\label{subsubsec:def_firewall}
A second function (the ``firewall'') will now be defined, to get some control over the last term of the right-hand side of inequality \cref{ds_en_before_fire}.
Let us introduce the function $\psi(y,s)$ (weight function for the firewall function) defined as follows (for every nonnegative quantity $s$ and every quantity $\rho$ in $\iTot(s)$):
\[
\psi(\rho,s) = \left\{
\begin{aligned}
& \exp\Bigl[\kappa\bigl(\rho - ( \initCut + \cCut s)\bigr)  \Bigr]\chi(\rho,s) && \text{if} \quad \rho \in \iLeft(s)\cup \iMain(s) \,, \\
& \chi(\rho,s) && \text{if} \quad \rho \in \iRight(s)\,,
\end{aligned}
\right.
\]
see \cref{fig:chi_psi}; and, for every nonnegative quantity $s$, let us define the ``firewall'' function by
\begin{equation}
\label{def_fff}
\fff(s) = \int_{\iTot(s)} \psi(\rho,s)\biggl( \coeffEn \Bigl( \frac{1}{2}v_\rho(\rho,s)^2 + V^\dag\bigl(v(\rho,s)\bigr) \Bigr) + \frac{1}{2}v(\rho,s)^2 \biggr) \, d\rho
\,.
\end{equation}
\subsubsection{Energy decrease up to firewall}
\label{subsubsec:loc_energy_decrease}
\begin{lemma}[energy decrease up to firewall]
\label{lem:ds_en_trav_f}
There exists a positive quantity $\KERight$, depending only on $V$, such that for every nonnegative quantity $s$, 
\begin{equation}
\label{ds_en_trav_f}
\begin{aligned}
\eee'(s) & \le -\frac{1}{2}\ddd(s) + \KELeft \exp \bigl( - c \rInit \bigr) \\
& + \KEMain \biggl( \exp\Bigl(-\frac{c\rInit}{2} \Bigr)+ \frac{2}{\rInit} \exp\bigl( c(\initCut + \cCut s)\bigr)\biggr) + \KERight \fff(s)
\,.
\end{aligned}
\end{equation}
\end{lemma}
\begin{proof}
Let us introduce the following positive quantity (depending only on $V$): 
\[
\KERight = \frac{\cCut(\cnoesc+\kappa) + (1/2 + \cnoesc + \kappa)^2}{\coeffEn}
\,.
\]
Inequality \cref{ds_en_trav_f} follows \cref{{lem:ds_en_before_fire_last}} (upper bound \cref{ds_en_before_fire_last} on $\eee'(s)$). For a detailed justification, see \cite[\GlobalBehaviourSubSubSecDefinitionFirewall]{Risler_globalBehaviour_2016}. 
\end{proof}
\subsubsection{Relaxation scheme inequality, 1}
Let $\sFin$ be a nonnegative quantity (denoting the length of the time interval on which the relaxation scheme will be applied), and let us introduce the expression:
\[
\begin{aligned}
\KECurv(r,s,c) = &
\KELeft \, s \exp \bigl( - c r \bigr) \\
& + \KEMain \, s \biggl( \exp\Bigl(-\frac{c r}{2} \Bigr)+ \frac{2}{\rInit} \exp\bigl( c(\initCut + \cCut s)\bigr)\biggr)
\,.
\end{aligned}
\]
It follows from the previous inequality that
\begin{equation}
\label{relax_sch_int_fff}
\frac{1}{2} \int_0^{\sFin} \ddd(s) \, ds  \le  \ \eee(0) - \eee(\sFin) + \KECurv(\rInit,\sFin,c) + \KERight \int_0^{\sFin}\fff(s) \, ds
\,.
\end{equation}
This ``relaxation scheme inequality'' is the core of the arguments carried out through this \cref{sec:inv_impl_cv} to prove \cref{prop:inv_cv}. The crucial property of the ``curvature term'' $\KECurv(r,s,c)$ is that this quantity goes to $0$ as $r$ goes to $+\infty$, uniformly with respect to $s$ bounded and $c$ bounded away from $0$ and $+\infty$. The next goal is to gain some control over the firewall function (and as a consequence over the last term of this inequality). 
\subsubsection{Firewall linear decrease up to pollution}
\label{subsubsec:der_fire}
For every nonnegative quantity $s$, let us introduce the set (the domain of space where $v(\cdot,s)$ ``Escapes'' at distance $\dEsc(m)$ from $0_{\rr^{\dState}}$) defined as
\[
\SigmaEsc(s)=\{\rho\in \iMain(s) \cup \iRight(s) : \abs{v(\rho,s)} > \dEsc(m) \} 
\,.
\]
To make the connection with definition \vref{def_Sigma_Esc_0} of the related set $\SigmaEscZero(t)$, observe that, for every quantity $\rho$ in $\iTot(s)$, 
\[
\rho \in \SigmaEsc(s) \iff 
\rInit + cs + \rho \in \SigmaEscZero(\tInit+s)
\,.
\]
The next step is the following lemma (observe the strong similarity with \vref{lem:dt_fire_0}).
\begin{lemma}[firewall linear decrease up to pollution]
\label{lem:decrease_firewall}
There exist positive quantities $\nuF$ and $\KF$ and $\KFLeft$ such that, for every nonnegative quantity $s$,
\begin{equation}
\label{ds_fire}
\fff'(s) \le - \nuF \fff(s) + \KF \int_{\SigmaEsc(s)} \psi(\rho,s) \, d\rho + \KFLeft \, \exp(-c \, \rInit)
\,.
\end{equation}
The quantities $\nuF$ and $\KF$ depend only on $V$ and $m$, whereas $\KFLeft$ depends additionally on $d$. 
\end{lemma}
\begin{proof}
\renewcommand{\qedsymbol}{} 
According to expressions \vref{ddt_loc_en_trav_fr,ddt_loc_L2_trav_fr} for the time derivatives of a localized energy and a localized $L^2$ functional, for all $s$ in $[0,+\infty)$, 
\begin{equation}
\label{ds_fire_1}
\begin{aligned}
\fff'(s) =  \int_{\iTot(s)} \Biggl[ &
\psi \Bigl( - \coeffEn v_s^2 - v \cdot \nabla V^\dag(v) - v_\rho^2 \Bigr) + \psi_s \biggl( \coeffEn \Bigl( \frac{1}{2}v_\rho^2 + V^\dag(v) \Bigr) + \frac{1}{2}v^2 \biggr) \\
& + \Bigl( \frac{d-1}{\rInit + cs + \rho} \psi + c\psi - \psi_\rho \Bigr) ( \coeffEn v_s \cdot v_\rho + v \cdot v_\rho )
\Biggr] \, d\rho 
\,.
\end{aligned}
\end{equation}
(this makes use of the ``first'' version of the time derivative of the $L^2$-functional written in \cref{ddt_loc_L2_trav_fr}, without the additional integration by parts of $c\psi - \psi_\rho$). The aim of the next calculations is to control the two last terms below this integral. 

It follows from the definition of $\psi$ that, for every nonnegative quantity $s$, 
\begin{equation}
\label{ds_psi}
\psi_s(\rho,s) =
\left\{
\begin{aligned}
& \Bigl( - \kappa\cCut + \frac{c (d-1)}{\rInit + cs + \rho}\Bigr) \psi(\rho,s)
& & \text{if}\quad\rho \in \iLeft(s) \,, \\
& -\kappa\cCut \psi(\rho,s)
& & \text{if}\quad\rho \in \iMain(s) \,, \\
& \cCut (c+\kappa) \psi(\rho,s)
& & \text{if}\quad\rho \in \iRight(s) \,, \\
\end{aligned}
\right.
\end{equation}
and
\[
\psi_\rho(\rho,s) = 
\left\{
\begin{aligned}
& \Bigl( \frac{d-1}{\rInit + cs + \rho} +c+\kappa \Bigr) \psi(\rho,s)
& & \text{if}\quad\rho \in \iLeft(s) \,, \\
&  (c+\kappa) \psi(\rho,s)
& & \text{if}\quad\rho \in \iMain(s) \,, \\
& -\kappa \psi(\rho,s) 
& & \text{if}\quad\rho \in \iRight(s) \,, \\
\end{aligned}
\right.
\]
thus
\begin{equation}
\label{c_psi_minus_psi_rho}
\begin{aligned}
\frac{d-1}{\rInit + cs + \rho}  \psi(\rho,s) + & c\psi(\rho,s)-\psi_\rho(\rho,s) = \\
& \left\{ 
\begin{aligned}
& - \kappa \psi(\rho,s)
& & \text{if}\quad\rho \in \iLeft(s) \,, \\
& \Bigl(\frac{d-1}{\rInit + cs + \rho} -\kappa\Bigr) \psi(\rho,s)
& & \text{if}\quad\rho \in \iMain(s) \,, \\
& \Bigl( \frac{d-1}{\rInit + cs + \rho} + c+\kappa \Bigr) \psi(\rho,s) 
& & \text{if}\quad\rho \in \iRight(s) \,. \\
\end{aligned}
\right.
\end{aligned}
\end{equation}
As in the case $d=1$ (see \cite{Risler_globalBehaviour_2016}), the sole problematic term in the right-hand side of expression \cref{ds_fire_1} (with respect to the conclusions of \cref{lem:decrease_firewall}) is the product
\[
(c\psi-\psi_\rho) \, v \cdot v_\rho
\]
on the interval $\iRight(s)$. As in \cite{Risler_globalBehaviour_2016}, this term can be integrated by parts one more time to take advantage of the smallness of $\psi_{\rho\rho}-c\psi_\rho$ on $\iRight(s)$. There are several ways to proceed, since the integration by parts may be performed either only on $\iMain(s) \cup \iRight(s)$ or on the whole interval $\iTot(s)$. Since the first option would create a border term at the left of $\iMain(s)$ let us go on with the second option. Doing so, it follows from \cref{ds_fire_1} that
\begin{equation}
\label{ds_fire_1_bis}
\begin{aligned}
\fff'(s) =  \int_{\iTot(s)} \Biggl[ &
\psi \Bigl( - \coeffEn v_s^2 - v \cdot \nabla V^\dag(v) - v_\rho^2 \Bigr) + \psi_s \biggl( \coeffEn \Bigl( \frac{1}{2}v_\rho^2 + V^\dag(v) \Bigr) + \frac{1}{2}v^2 \biggr) \\
& + \coeffEn \Bigl( \frac{d-1}{\rInit + cs + \rho} \psi + c\psi - \psi_\rho \Bigr) v_s \cdot v_\rho + \frac{d-1}{\rInit + cs + \rho} \psi v \cdot v_\rho \\
& + \frac{\psi_{\rho\rho} - c \psi_\rho}{2}v^2
\Biggr] \, d\rho 
\,.
\end{aligned}
\end{equation}
It follows from the expression of $\psi_\rho$ above that, for every nonnegative quantity $s$, 
\begin{equation}
\label{psi_rho_rho_minus_c_psi_rho}
\psi_{\rho\rho}(\rho,s) - c\psi_\rho(\rho,s) \le \theta(\rho,s)
\quad\text{for all}\quad
\rho \in \iTot(s) 
\,,
\end{equation}
where
\[
\theta(\rho,s) = 
\left\{
\begin{aligned}
& \biggl(
\kappa(c+\kappa) + \frac{(c+2\kappa) (d-1)}{\rInit + cs + \rho} + \frac{(d-1)(d-2)}{(\rInit + cs + \rho)^2} 
\biggr) \psi(\rho,s)
&\quad\text{if}\quad\rho \in \iLeft(s)  \,, &\ \\
&  \kappa(c+\kappa)\psi(\rho,s)
&\quad\text{if}\quad\rho \not\in \iLeft(s) \,. &\
\end{aligned}
\right.
\]
Indeed, $\psi_{\rho\rho} - c\psi_\rho$ equals $\theta$ plus two Dirac masses of negative weight (one at the junction between $\iLeft(s)$ and $\iMain(s)$, and one at the junction between $\iMain(s)$ and $\iRight(s)$).

Observe that for every $\rho$ in the interval $\iMain(s)\cup\iRight(s)$, the quantity $\rInit + cs + \rho$ is not smaller than $\rSmallCurv$. As a consequence, it follows from equality \cref{ds_fire_1_bis} that, for every nonnegative quantity $s$, 
\begin{equation}
\label{ds_fire_1_ter}
\begin{aligned}
\fff'(s) \le & \int_{\iTot(s)}  \psi \Biggl[ 
- \coeffEn v_s^2 - v \cdot \nabla V^\dag(v) - v_\rho^2 + \cCut(c+\kappa) \biggl( \coeffEn \Bigl( \frac{1}{2}v_\rho^2 + V^\dag(v) \Bigr) + \frac{1}{2}v^2 \biggr) \\
& + \coeffEn \Bigl( \frac{d-1}{\rSmallCurv} + c + \kappa \Bigr) \abs{v_s \cdot v_\rho} + \frac{d-1}{\rSmallCurv} \abs{v \cdot v_\rho} +  \frac{\kappa(c+\kappa)}{2}v^2
\Biggr] \, d\rho \\
& + \FResidueLeft(s)
\end{aligned}
\end{equation}
where
\[
\begin{aligned}
\FResidueLeft(s) = & \int_{\iLeft(s)} \frac{d-1}{\rInit + cs + \rho} \psi \Biggl[ c \biggl( \coeffEn \Bigl( \frac{1}{2}v_\rho^2 + V^\dag(v) \Bigr) + \frac{1}{2}v^2 \biggr) 
 \\
& +  v \cdot v_\rho + \frac{1}{2}\biggl((c+2\kappa) + \frac{(d-2)}{\rInit + cs + \rho} \biggr) v^2
\Biggr] \, d\rho
\,.
\end{aligned}
\]
The following lemma deals with the ``residual'' term $\FResidueLeft(s)$.
\end{proof} 
\begin{lemma}[control on the residual integral over $\iLeft(s)$]
\label{lem:bd_psi_s_iLeft}
There exists a positive quantity $\KFLeft$, depending only on $V$ and $d$, such that, for every nonnegative quantity $s$, the following estimate holds:
\[
\FResidueLeft(s) \le \KFLeft \, \exp(-c\rInit)
\,.
\]
\end{lemma}
\begin{proof}[Proof of \cref{lem:bd_psi_s_iLeft}]
Since $\psi$ is smaller than $\chi$ on the interval $\iLeft(s)$, the proof is identical to that of \vref{lem:bound_ds_en_iLeft} (observe the vanishing term in $\FResidueLeft(s)$ if $d=2$).
\end{proof}
\begin{proof}[End of the proof of \cref{lem:decrease_firewall}]
Using the inequalities
\[
\begin{aligned}
\Bigl( \frac{d-1}{\rSmallCurv} + c + \kappa \Bigr) \abs{v_s \cdot v_\rho} &\le v_s^2 + \frac{\Bigl( \frac{d-1}{\rSmallCurv} + c + \kappa \Bigr)^2}{4}  v_\rho^2 \\
\quad\text{and}\quad
\abs{v \cdot v_\rho} &\le \frac{1}{2}v^2 + \frac{1}{2}v_\rho^2
\,,
\end{aligned}
\]
it follows from inequality \cref{ds_fire_1_ter} and from \cref{lem:bd_psi_s_iLeft} that, for every nonnegative quantity $s$, 
\[
\begin{aligned}
\fff'(s) \le & \int_{\iTot(s)}\psi\Biggl[ 
\biggl( -1 + \coeffEn\Bigl(\frac{\cCut(c+\kappa)}{2} + \frac{(c+\kappa+\frac{d-1}{\rSmallCurv})^2}{4}\Bigr) + \frac{d-1}{2\rSmallCurv} \biggr) v_\rho^2 \\
& - v \cdot \nabla V^\dag(v) + \coeffEn\cCut(c+\kappa) \abs{V^\dag(v)}
+ \Bigl( \frac{d-1}{2\rSmallCurv} + \frac{(\cCut+\kappa)(c+\kappa)}{2} \Bigr) v^2
\Biggr] \, d\rho \\
& + \KFLeft \, \exp(-c\rInit)  
\,.
\end{aligned}
\]
Since according to the definition \vref{def_r_s_c} for $\rSmallCurv$ the quantity $(d-1)/\rSmallCurv$ is smaller than $1/2$ and than $\eigVmin(m)/8$, it follows that
\[
\begin{aligned}
\fff'(s) \le & \int_{\iTot(s)}\psi\Biggl[ 
\biggl( -1 + \coeffEn\Bigl(\frac{\cCut(c+\kappa)}{2} + \frac{(c+\kappa+\frac{1}{2})^2}{4}\Bigr) + \frac{1}{4} \biggr) v_\rho^2 \\
& - v \cdot \nabla V^\dag(v) + \coeffEn\cCut(c+\kappa) \abs{V^\dag(v)}
+ \Bigl( \frac{\eigVmin(m)}{16} + \frac{(\cCut+\kappa)(c+\kappa)}{2} \Bigr) v^2
\Biggr] \, d\rho \\
& + \KFLeft \, \exp(-c\rInit)
\,,
\end{aligned}
\]
and according to the properties \vref{constraints_quantities_relax_sch} satisfied by the quantities $\kappa$ and $\cCut$ and $\coeffEn$, it follows that
\begin{equation}
\label{ds_fire_2}
\begin{aligned}
\fff'(s) \le & \int_{\iTot(s)}\psi\Bigl( 
-\frac{1}{2}v_\rho^2 - v \cdot \nabla V^\dag(v) + \frac{1}{4} \abs{V^\dag(v)} + \frac{\eigVmin(m)}{8} v^2 \Bigr) \, d\rho \\
& + \KFLeft \, \exp(-c\rInit)
\,.
\end{aligned}
\end{equation}
Let $\nuF$ be a positive quantity to be chosen below. It follows from the previous inequality and from the definition \cref{def_fff} of $\fff(s)$ that
\begin{equation}
\label{ds_fire_3}
\begin{aligned}
\fff'(s) + \nuF \fff(s) - \KFLeft \, &\exp(-c\rInit) \le \int_{\iTot(s)}\psi\biggl[ -\frac{1}{2}(1-\nuF\, \coeffEn)v_\rho^2 - v \cdot \nabla V^\dag(v) \\
& +  \Bigl(\frac{1}{4}+\nuF\coeffEn\Bigr) \abs{V^\dag(v)} + \Bigl(\frac{\eigVmin(m)}{8} + \frac{\nuF}{2}\Bigr)v^2 \biggr] \, d\rho 
\,.
\end{aligned}
\end{equation}
In view of this inequality and of inequalities \vref{v_nablaV_controls_square_around_loc_min_dag,v_nablaV_controls_pot_around_loc_min_dag}, let us assume that $\nuF$ is small enough so that
\begin{equation}
\label{def_nu_fire}
\nuF\, \coeffEn \le 1
\quad\text{and}\quad
\nuF\, \coeffEn \le \frac{1}{4}
\quad\text{and}\quad
\frac{\nuF}{2} \le \frac{\eigVmin(m)}{8}
\,;
\end{equation}
the quantity $\nuF$ may be chosen as
\[
\nuF = \min \Bigl( \frac{1}{4\coeffEn} , \frac{\eigVmin(m)}{4} \Bigr)
\,.
\]
Then, it follows from \cref{ds_fire_3,def_nu_fire} that
\begin{equation}
\label{ds_fire_4}
\fff'(s) + \nuF \fff(s) - \KFLeft \, \exp(-c\rInit) \le \int_{\iTot(s)}\psi\Bigl[- v \cdot \nabla V^\dag(v) + \frac{1}{2} \abs{V^\dag(v)} + \frac{\eigVmin(m)}{4} v^2 \Bigr]\, d\rho 
\,.
\end{equation}
According to \cref{v_nablaV_controls_square_around_loc_min_dag,v_nablaV_controls_pot_around_loc_min_dag}, the integrand of the integral at the right-hand side of this inequality is nonpositive as long as $\rho$ is \emph{not} in $\SigmaEsc(s)$.
Therefore this inequality still holds if the domain of integration of this integral is changed from $\iTot(s)$ to $\SigmaEsc(s)$. 
Thus, introducing the quantity 
\[
\KF = \max_{u\in\rr^{\dState}, \ \abs{u}\le \Rattinfty}\Bigl[- (u-m)\cdot \nabla V(u) + \frac{1}{2}\abs{V(u)-V(m)}+ \frac{\eigVmin(m)}{4}(u-m)^2 \Bigr]
\]
(which is positive and depends only on $V$ and $m$), inequality \cref{ds_fire} follows from inequality \cref{ds_fire_4}. \Cref{lem:decrease_firewall} is proved.
\end{proof}
\subsubsection{Nonnegativity of the firewall}
For all $s$ in $[0,+\infty)$,
\begin{equation}
\label{nonnegativity_of_fff}
\fff(s) \ge 0
\,.
\end{equation}
Indeed, in view of the property \vref{property_weight_en} concerning $\coeffEnZero$ and since $\coeffEn$ is not larger than $\coeffEnZero$, for all $s$ in $[0,+\infty)$ the following stronger coercivity property holds:
\[
\fff(s) \ge \min\Bigl(\frac{\coeffEn}{2},\frac{1}{4}\Bigr) \int_{\iTot(s)} \psi(\rho,s) \bigl( v_\rho(\rho,s)^2 + v(\rho,s)^2 \bigr) \, d\rho
\,.
\]
\subsubsection{Relaxation scheme inequality, 2}
For every nonnegative quantity $s$, let
\[
\mathcal{G}(s) = \int_{\SigmaEsc(s) } \psi(\rho,s) \, d\rho
\,.
\]
Integrating inequality \cref{ds_fire} between $0$ and a nonnegative quantity $\sFin$ yields, since according to \cref{nonnegativity_of_fff} $\fff(\sFin)$ is nonnegative, 
\[
\int_0^{\sFin} \fff(s) \, ds \le \frac{1}{\nuF} \Bigl( \fff(0) + \KF \int_0^{\sFin} \mathcal{G} (s) \, ds 
+ \KFLeft \, \sFin \, \exp(-c\, \rInit) \Bigr)
\,.
\]
Thus, introducing the expression
\[
\tidleKECurv(r,s,c) = \KECurv(r,s,c) + \frac{\KERight \, \KFLeft \, \sFin}{\nuF} \exp(-c\, \rInit)
\,,
\]
the ``relaxation scheme'' inequality \vref{relax_sch_int_fff} becomes
\begin{equation}
\label{relax_scheme_ggg}
\begin{aligned}
\frac{1}{2} \int_0^{\sFin} \ddd(s) \, ds \le & \eee(0) - \eee(\sFin)  + \tidleKECurv \bigl(\rInit,\sFin,c\bigr) \\
& + \frac{\KERight}{\nuF}  \Bigl(\fff(0) + \KF \int_0^{\sFin} \mathcal{G} (s) \, ds\Bigr)
\,.
\end{aligned}
\end{equation}
Observe that, as was the case for $\KECurv(r,s,c)$, the ``curvature term'' $\tidleKECurv(r,s,c)$ (still) goes to $0$ as $r$ goes to $+\infty$, uniformly with respect to $s$ bounded and $c$ bounded away from $0$ and $+\infty$.
The next step is to gain some control over the quantity $\mathcal{G}(s)$.
\subsubsection{Control over pollution in the time derivative of the firewall function}
\label{subsubsec:flux_der_fire}
For every nonnegative quantity $s$ let
\[
\begin{aligned}
\rhoHom(s) &= \rHom (\tInit + s) - \rInit - cs \,, \\
\text{and}\quad\rhoesc(s) &= \resc (\tInit + s) - \rInit - cs
\,.
\end{aligned}
\]
According to properties \vref{rEsc_resc_rHom} for the set $\SigmaEscZero(t)$, 
\[
\SigmaEsc(s) \subset (-\infty, \rhoesc(s)] \cup [\rhoHom(s),+\infty)
\,.
\]
Let us introduce the quantities 
\[
\Gback(s) = \int_{-\rInit - cs}^{\rhoesc(s)} \psi (\rho,s)\, d\rho
\quad\text{and}\quad
\Gfront(s) = \int_{\rhoHom(s)}^{+\infty} \psi (\rho,s)\, d\rho
\,;
\]
observe that, by definition --- see \vref{def_resc} --- the quantity $\rhoesc(s)$ is greater than or equal to $\rSmallCurv -\rInit - cs$, and is therefore greater than $-\rInit - cs$. Then,
\[
\mathcal{G} (s) \le \Gback(s) + \Gfront(s)
\,.
\]
Let us make the following hypothesis (required for the next lemma to hold):
\begin{equation}
\label{hyp_c_close_to_barcescsup}
(c+\kappa)(\barcescsup - c) \le \frac{\kappa \cCut}{4}
\end{equation}
(this hypothesis is satisfied as soon as $c$ is close enough to $\barcescsup$).
\begin{lemma}[upper bounds on pollution terms in the derivative of the firewall]
\label{lem:bound_Gback_Gfront}
There exists \\
a positive quantity $K[u_0]$, depending only on $V$ and on the initial condition $u_0$ (but not on the parameters $\tInit$ and $\rInit$ and $c$ and $\initCut$ of the relaxation scheme) such that, for every nonnegative quantity $s$, 
\begin{equation}
\label{up_bd_S_G_back_G_front}
\begin{aligned}
\Gback(s) & \le K[u_0] \exp(-\kappa \, \initCut) \exp\Big( -\frac{\kappa\, \cCut}{2}s\Bigl) \\
\Gfront(s) & \le \frac{1}{\kappa}\exp\bigl( (\cnoesc+1)\,  \initCut \bigr) \exp \bigl( (\cnoesc + \kappa) (\cCut + \kappa)  s \bigr) \exp \bigl( -\kappa \, \rhoHom(0)\bigr)
\,.
\end{aligned}
\end{equation}
\end{lemma}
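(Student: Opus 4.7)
The plan is to integrate the explicit piecewise form of $\psi(\cdot,s)$ and then to control the resulting exponents. For $\Gback$ the key ingredients are the asymptotic definition of $\barcescsup$ combined with hypothesis \cref{hyp_c_close_to_barcescsup}; for $\Gfront$ the key ingredient is the monotonicity of $\rHom$ provided by \cref{hyp_xHom_prime_pos}. The polynomial prefactor $\bigl((\rInit + cs + \rho)/\rSmallCurv\bigr)^{d-1}$ appearing on $\iLeft(s)$ is bounded by $1$ and plays essentially no role, so the argument mirrors the one-dimensional case treated in \cite{Risler_globalBehaviour_2016}.

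For $\Gback(s)$, observe that on both $\iLeft(s)$ and $\iMain(s)$ one has $\psi(\rho,s) \le \exp\bigl((c+\kappa)\rho - \kappa(\initCut + \cCut s)\bigr)$. Since by \cref{rEsc_resc_rHom} together with \cref{hyp_for_def_resc} the escape point $\rhoesc(s)$ lies to the left of $\rhoHom(s) - 1$, direct integration of this upper bound yields
\[
\Gback(s) \le \frac{1}{c+\kappa}\exp(-\kappa\initCut)\exp\bigl((c+\kappa)\rhoesc(s) - \kappa\cCut s\bigr).
\]
It remains to bound the last exponent by $-\tfrac{\kappa\cCut}{2}s$ up to a constant. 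From $\rhoesc(s) = \resc(\tInit+s) - \rInit - cs$ and \cref{various_bounds_r_esc}, one has $(c+\kappa)\rhoesc(s) - \kappa\cCut s \le (c+\kappa)\bigl(\resc(\tInit)-\rInit\bigr) + (c+\kappa)\bigl(\barresc(s)-cs\bigr) - \kappa\cCut s$. By definition of $\barcescsup$, for every $\delta > 0$ there is $s_\delta$ with $\barresc(s) \le (\barcescsup + \delta) s$ for $s \ge s_\delta$, while $\barresc(s) \le \cnoesc s$ holds for all $s$. Applying hypothesis \cref{hyp_c_close_to_barcescsup} and choosing $\delta$ small enough forces $(c+\kappa)(\barresc(s) - cs) \le \kappa\cCut s/2$ for $s \ge s_\delta$, with an additive constant taking care of the range $s \in [0, s_\delta]$. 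The term $(c+\kappa)\bigl(\resc(\tInit) - \rInit\bigr)$, depending on $u_0$ through $\resc$ but on no parameter of the scheme beyond what can be absorbed, is folded into $K[u_0]$.

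For $\Gfront(s)$, we distinguish two cases. If $\rhoHom(s) \ge \initCut + \cCut s$, direct integration of the exponentially decaying $\psi$ on $\iRight(s)$ gives $\Gfront(s) = \tfrac{1}{\kappa}\exp\bigl((c+\kappa)(\initCut + \cCut s) - \kappa\rhoHom(s)\bigr)$. Otherwise, splitting the integral at $\initCut + \cCut s$ and integrating each piece yields $\Gfront(s) \le \tfrac{2}{\kappa}\exp\bigl(c(\initCut + \cCut s)\bigr)$, which, writing $c(\initCut + \cCut s) = (c+\kappa)(\initCut + \cCut s) - \kappa(\initCut + \cCut s)$ and using $\rhoHom(s) \le \initCut + \cCut s$, is again bounded by the same expression as in the first case. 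In both cases, \cref{hyp_xHom_prime_pos} ensures $\rhoHom(s) \ge \rhoHom(0) - cs$, hence $-\kappa\rhoHom(s) \le -\kappa\rhoHom(0) + \kappa cs$, and the exponent reorganizes as $(c+\kappa)\initCut + \bigl[(c+\kappa)\cCut + \kappa c\bigr]s - \kappa\rhoHom(0)$; using $c \le \cnoesc$ and $\kappa \le 1$ to replace the coefficients by $(\cnoesc + 1)$ and $(\cnoesc + \kappa)(\cCut + \kappa)$ yields the stated second inequality. The main technical subtlety is the $\Gback$ estimate, where the \emph{asymptotic} nature of $\barcescsup$ requires separating short and long times, and where one must verify that the residual constant depends only on $V$ and $u_0$, not on $\tInit$, $\rInit$, $c$, $\initCut$.
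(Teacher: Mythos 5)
Your argument is correct and follows the same route as the proof the paper defers to (the one-dimensional companion lemma): explicit integration of the piecewise weight, hypothesis \cref{hyp_c_close_to_barcescsup} together with the definition of $\barcescsup$ (splitting short from long times) for $\Gback$, and the monotonicity \cref{hyp_xHom_prime_pos} of $\rHom$ for $\Gfront$. Two small remarks. First, your case distinction for $\Gfront$ produces the prefactor $2/\kappa$ when $\rhoHom(s)<\initCut+\cCut s$, which does not quite reproduce the stated constant $1/\kappa$; this is harmless for the sequel, and is avoided altogether by noting the uniform pointwise bound $\psi(\rho,s)\le\exp\bigl((c+\kappa)(\initCut+\cCut s)-\kappa\rho\bigr)$ on all of $\iTot(s)$ (the exponents differ by $(c+2\kappa)\bigl(\rho-(\initCut+\cCut s)\bigr)\le 0$ on $\iLeft(s)\cup\iMain(s)$), which gives the first case's formula with no splitting. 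Second, the term $(c+\kappa)\bigl(\resc(\tInit)-\rInit\bigr)$ can only be ``folded into $K[u_0]$'' because the scheme takes $\rInit=\resc(\tInit)$ (as announced in \cref{subsubsec:def_trav_f}), so that this term vanishes; for an arbitrary admissible $\rInit$ it would be unbounded, so you should state explicitly that you are using this choice rather than appealing to absorption.
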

\begin{proof}
The proof is identical to that of \cite[\GlobalBehaviourLemBoundGbackGfront]{Risler_globalBehaviour_2016}.
\end{proof}
\subsubsection{Relaxation scheme inequality, final}
\label{subsubsec:fin_relax}
Let us introduce the quantity
\[
\KGback[u_0] = \frac{2 \, \KERight \, \KF \, K[u_0]}{ \nuF \, \kappa \cCut }
\,,
\]
and, for every nonnegative quantity $s$, the quantity
\[
\KGfront (s) = \frac{ \KERight \, \KF}{\nuF \, \kappa \, (\cnoesc + 1) (\cCut + 1)}
\exp \bigl( (\cnoesc + 1) (\cCut + 1)  s \bigr)
\,.
\]
Then, for every nonnegative quantity $\sFin$, according to inequalities \cref{up_bd_S_G_back_G_front}, the ``relaxation scheme'' inequality \cref{relax_scheme_ggg} can be rewritten as
\begin{equation}
\label{relax_scheme_final}
\begin{aligned}
\frac{1}{2} \int_0^{\sFin} \ddd(s) \, ds \le & \eee(0) - \eee(\sFin) + \frac{\KERight}{\nuF} \fff(0) 
+ \KGback[u_0] \exp(-\kappa \, \initCut) \\
& + \KGfront (\sFin)\exp\bigl( (\cnoesc+1)\,  \initCut \bigr) \exp \bigl( -\kappa \, \rhoHom(0)\bigr) \\
& + \KECurv\bigl(\rInit,\sFin,c\bigr)
\,.
\end{aligned}
\end{equation}
Recall that the ``curvature term'' $\KECurv(r,s,c)$ goes to $0$ as $r$ goes to $+\infty$, uniformly with respect to $s$ bounded and $c$ bounded away from $0$ and $+\infty$. Recall by the way that this last inequality requires the additional hypothesis \vref{hyp_rInit_larger_twice_rsc} made on the quantity $\rInit$ (namely, $\rInit$ should not be smaller than $2\rSmallCurv$). 
\subsection{Convergence of the mean invasion speed}
\label{subsec:cv_mean_inv_vel}
The aim of this \namecref{subsec:cv_mean_inv_vel} is to prove the following proposition. 
\begin{proposition}[mean invasion speed]
\label{prop:cv_mean_inv_vel}
The following equalities hold:
\[
\cescinf = \cescsup = \barcescsup
\,.
\]
\end{proposition}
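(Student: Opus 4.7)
The inequalities $\cescinf \le \cescsup \le \barcescsup$ follow directly from the definitions and from the elementary bounds \cref{various_bounds_r_esc}, so the real content of the proposition is the converse inequality $\cescinf \ge \barcescsup$. The plan is to argue by contradiction. Assume $\cescinf < \barcescsup$ and, using hypothesis \hypDiscVel, pick a speed $c$ with $\cescinf < c < \barcescsup$, close enough to $\barcescsup$ for the constraint \cref{hyp_c_close_to_barcescsup} to hold, and such that $c$ does not belong to the (empty-interior) set of admissible invasion speeds $\{c'>0 : \Phi_{c'}(m)\neq\emptyset\}$.

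By the definition of $\barcescsup$ one can extract sequences $t_p \to +\infty$ and $s_p \to +\infty$ such that $\resc(t_p+s_p)-\resc(t_p) \ge (c+\delta)s_p$ for some fixed $\delta>0$; combining $\cescinf < c$ with the fact that $\resc(t_p+s_p) \to +\infty$ (and bisecting the time interval $[t_p,t_p+s_p]$ if necessary), one can arrange in addition that the base origin $\rInit_p := \resc(t_p)$ tends to $+\infty$ while the distance $\rhoHom(0) = \rHom(t_p)-\rInit_p$ also tends to $+\infty$ (thanks to \cref{rHom_minus_resc}). Apply the relaxation scheme of \cref{subsec:relax_sch_tr_fr} in the frame travelling at speed $c$, with $\tInit=t_p$, $\rInit=\rInit_p$, and a cutoff parameter $\initCut=\initCut_p$ chosen growing slowly enough that the front flux factor $\exp((\cnoesc+1)\initCut_p - \kappa\rhoHom(0))$ stays bounded. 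The relaxation-scheme inequality \cref{relax_scheme_final} then yields a uniform bound $\int_0^{s_p}\ddd(s)\,ds = O(1)$: the curvature correction $\KECurv(\rInit_p, s_p, c)$ vanishes as $\rInit_p \to +\infty$, the initial energy and firewall are controlled by the a priori bounds \cref{bound_u_ut_ck_bis}, and the back/front flux terms are dominated by construction.

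From this $L^1$ bound, a Chebyshev-type argument (the delicate step highlighted below) produces a time $\sigma_p \in [0,s_p]$ at which $\ddd(\sigma_p)\to 0$ while simultaneously $\rhoesc(\sigma_p) \to +\infty$ and $\rhoHom(\sigma_p)-\rhoesc(\sigma_p) \to +\infty$. Setting $r_p := \rInit_p + c\sigma_p + \rhoesc(\sigma_p)$ and applying the compactness \cref{lem:compact} to the sequence $(r_p,\, t_p+\sigma_p)$, the weighted $L^2$ smallness of $v_s$ combined with the parabolic equicontinuity \cref{bound_u_ut_ck_bis} upgrades to local uniform smallness, forcing the limit pair $(u_\infty,\tilde{u}_\infty)$ to satisfy $\tilde{u}_\infty + c u_\infty' = 0$. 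Hence $u_\infty$ solves the travelling-front ODE \cref{syst_trav_front} at speed $c$, with $\abs{u_\infty(0)-m} = \dEsc$ (the escape condition passes to the limit) and $u_\infty(\xi)\to m$ as $\xi\to+\infty$ (by \hypHom and the fact that our basepoint sits to the left of the homogeneous region). The coercivity of $V$ and the boundedness of $u_\infty$ then force $u_\infty$ to be a heteroclinic from some critical point $u_-$ to $m$; hypothesis \hypOnlyBist promotes $u_-$ to an element of $\mmm$, so $u_\infty \in \Phi_c(m_-,m)$, contradicting the choice of $c$.

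The main obstacle is the simultaneous extraction of $\sigma_p$: the upper Lipschitz bound \cref{control_escape} on $\resc$ does not provide a matching lower bound, so $\rhoesc$ could in principle fluctuate freely and spend little time in the ``advanced'' region where the dissipation happens to be small. Overcoming this will likely require feeding the firewall decay of \cref{lem:decrease_firewall} back into the hull-invasion bound of \cref{lem:inv} so as to force $\rhoesc$ to advance in a quasi-monotone way over time intervals long enough for the Chebyshev extraction to succeed.
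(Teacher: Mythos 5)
Your setup coincides with the paper's: argue by contradiction, and use \hypDiscVel to pick $c$ with $\cescinf<c<\barcescsup$, close enough to $\barcescsup$ for \cref{hyp_c_close_to_barcescsup} to hold and with $\Phi_c(0_{\rr^n})=\emptyset$ (this is exactly condition \cref{hyp_c_cv_mean_vel}). But from there your argument runs in the opposite direction from the paper's, and the step you yourself flag as delicate is a genuine gap, not a technicality. You want to use the relaxation bound on $\int_0^{s_p}\ddd(s)\,ds$ to \emph{find} a time where the dissipation is small \emph{and} the escape point sits at a controlled, large position, and then build a front by compactness. The obstruction is structural: $\resc$ satisfies only the one-sided bound \cref{control_escape} ($\resc(t+s)\le\resc(t)+\cnoesc s$), so in the travelling frame $\rhoesc(\cdot)$ can drop arbitrarily far and arbitrarily fast. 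A Chebyshev extraction gives a large-measure set of times where $\ddd$ is small, but nothing forces that set to meet the (possibly tiny) set of times where $\rhoesc$ is large; worse, where $\rhoesc$ is very negative the weight $\chi$ at the escape point is exponentially small, so smallness of $\ddd(\sigma_p)$ does not even give local $L^2$ smallness of $v_s$ near the escape point there. Your proposed repair (feeding the firewall decay back into \cref{lem:inv} to make $\rhoesc$ quasi-monotone) is not carried out and is not obviously available.

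The paper circumvents exactly this by turning your compactness step around. Its key \cref{lem:dissip_no_tf_vel} shows, once and for all, that since $\Phi_c(0_{\rr^n})=\emptyset$, there are $L,\epsDissip>0$ such that at \emph{every} time with $\resc(t)\ge L$ the quantity $\norm{u_t+cu_r}_{L^2([\resc(t)-L,\resc(t)+L])}$ is at least $\epsDissip$ (the proof is your compactness argument, applied contrapositively: a vanishing subsequence would produce an element of $\Phi_c(0_{\rr^n})$, or the zero solution, contradicting the definition of $\resc$ together with \cref{rHom_minus_resc}). With this uniform-in-time lower bound in hand, the contradiction is obtained by \emph{lower}-bounding $\int_0^{\sFin}\ddd(s)\,ds$: on an interval where $\resc$ advances at mean speed $>c$ one has $\rhoesc(\sFin)=A$ large, and the one-sided bound \cref{control_escape} — which is useless for your direction — is exactly what guarantees $\rhoesc(s)\ge 0$ on a terminal subinterval of length of order $A/(\cnoesc-c)$, where the weight $\chi$ at the escape point is bounded below; hence $\int\ddd$ grows linearly in $A$, contradicting the upper bound \cref{relax_scheme_final}. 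So while your overall toolbox (relaxation scheme, compactness, \hypDiscVel) is the right one, the argument as proposed does not close, and the missing ingredient is precisely the uniform dissipation lower bound of \cref{lem:dissip_no_tf_vel}.
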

\begin{proof}
Let us proceed by contradiction and assume that
\[
\cescinf < \barcescsup
\,.
\]
Then, let us take and fix a positive quantity $c$ satisfying the following conditions:
\begin{equation}
\label{hyp_c_cv_mean_vel}
\cescinf < c < \barcescsup \le c + \frac{\kappa \cCut}{4(\cnoesc+\kappa)}
\quad\text{and}\quad
\Phi_c(m) = \emptyset
\,.
\end{equation}
The first condition is satisfied as soon as $c$ is smaller than and close enough to $\barcescsup$, thus existence of a quantity $c$ satisfying the two conditions follows from hypothesis \textup{(\hyperlink{hypDiscVel}{\hypDiscVelRef})}.

The contradiction will follow from the relaxation scheme set up in \cref{subsec:relax_sch_tr_fr}. The main ingredient is: since the set $\Phi_c(m)$ is empty, some dissipation must occur permanently around the escape point in a referential travelling at the speed $c$. This is stated by the following lemma. 
\begin{lemma}[nonzero dissipation in the absence of travelling front]
\label{lem:dissip_no_tf_vel}
There exist positive quantities $L$ and $\epsDissip$ such that, for every $t$ in $[0,+\infty)$, if the quantity $\resc(t)$ is greater than or equal to $L$, then the following inequality holds:
\[
\norm{\rho \mapsto u_t\bigl( \resc (t) + \rho, t\bigr) + c u_r \bigl( \resc (t) + \rho, t\bigr) }_{L^2([-L,L],\rr^{\dState})} \ge \epsDissip
\,.
\]
\end{lemma}
\begin{proof}[Proof of \cref{lem:dissip_no_tf_vel}]
Let us proceed by contradiction and assume that the converse is true. Then, for every positive integer $n$, there exists $t_n$ in $[0,+\infty)$ such that the quantity $\resc(t_n)$ is greater than or equal to $n$ and such that
\begin{equation}
\label{proof_lem_dissip}
\norm{\rho \mapsto u_t\bigl( \resc (t_n) + \rho, t_n \bigr) + c u_r \bigl( \resc (t_n) + \rho, t_n \bigr) }_{L^2([-p,p],\rr^{\dState})} \le \frac{1}{n}
\,.
\end{equation}
By compactness (\cref{lem:compactness}), up to replacing the sequence $(t_n)_{n\in\nn}$ by a subsequence, there exists an entire solution $\widebar{u}$ of system \cref{syst_rad_sym_large_radius} in 
\[
\ccc^0\left(\rr,\cccbR{2}\right)\cap \ccc^1\left(\rr,\cccbR{0}\right)
\,,
\]
such that, with the notation of \cref{compactness}, 
\[
D^{2,1}u(\resc (t_n)+\cdot,t_n+\cdot)\to D^{2,1}\widebar{u}
\quad\text{as}\quad
n\to+\infty
\,,
\]
uniformly on every compact subset of $\rr^2$. According to inequality \cref{proof_lem_dissip}, the function $\rho\mapsto\widebar{u}_t(\rho,0)+c\widebar{u}_r(\rho,0)$ vanishes identically, so that the function $\rho\mapsto\widebar{u}(\rho,0)$ is a solution of system \cref{syst_trav_front} governing the profiles of waves travelling at the speed $c$ for system \cref{syst_rad_sym_large_radius}. According to the properties of the escape point \vref{rEsc_resc_rHom,rHom_minus_resc}, 
\[
\sup_{\rho \in[0,+\infty)} \abs{\phi(\rho)-m} \le \dEsc(m)
\,,
\]
thus it follows from \cite[\GlobalBehaviourLemTWApproachCriticalPoints]{Risler_globalBehaviour_2016} that $\phi(\rho)$ goes to $m$ as $\rho$ goes to $+\infty$. On the other hand, according to the bound \cref{hyp_attr_ball} on the solution, $\abs{\phi(\cdot)}$ is bounded (by $\Rattinfty$), and since $\Phi_c(m)$ is empty, it follows from \cite[\GlobalBehaviourLemTWApproachCriticalPoints]{Risler_globalBehaviour_2016} that $\phi(\cdot)$ is identically equal to $m$, a contradiction with the definition of $\resc(\cdot)$. 
\end{proof}
The remaining of the proof of \cref{prop:cv_mean_inv_vel} is identical to that of \cite[\GlobalBehaviourPropMeanInvasionSpeed]{Risler_globalBehaviour_2016}, therefore it will not be reproduced here. 
\end{proof}
According to \cref{prop:cv_mean_inv_vel}, the three quantities $\cescinf$ and $\cescsup$ and $\barcescsup$ are equal; let
\[
\cesc
\]
denote their common value.
\subsection{Further control on the escape point}
\label{subsec:further_control}
\begin{proposition}[mean invasion speed, further control]
\label{prop:further_control}
The following equality holds:
\[
\underbarcescinf = \cesc
\,.
\]
\end{proposition}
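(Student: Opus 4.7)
The plan is to argue by contradiction, paralleling the proof of \cref{prop:cv_mean_inv_vel}. The chain of inequalities at the end of \cref{subsec:inv_cv_set_pf_cont} already gives $\underbarcescinf \le \cescinf = \cesc$, so only the reverse inequality needs to be established. Assume for contradiction that $\underbarcescinf < \cesc$. Since, by hypothesis \hypDiscVel, the set of positive speeds $c$ for which $\Phi_c(0_{\rr^n})$ is nonempty has empty interior, one may pick a positive quantity $c$ satisfying
\[
\underbarcescinf < c < \cesc \,, \quad \Phi_c(0_{\rr^n}) = \emptyset \,,
\]
and close enough to $\barcescsup = \cesc$ for the technical condition \cref{hyp_c_close_to_barcescsup} to hold. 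From the definition of $\underbarcescinf$ as a liminf of an infimum, there exist a positive quantity $\delta$ and sequences $s_p \to +\infty$ and $t_p$ in $[0,+\infty)$ such that $\resc(t_p + s_p) - \resc(t_p) \le (c - \delta) s_p$, and since $\resc(t)/t \to \cesc$ as $t \to +\infty$, the sequence $t_p$ may additionally be arranged to grow to $+\infty$ as fast as needed.

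Next, I would apply the relaxation scheme of \cref{subsec:relax_sch_tr_fr} with parameters $\tInit = t_p$, speed $c$, final time $\sFin = s_p$, and initial radius $\rInit = \resc(t_p + s_p) - c s_p$. With this choice $\rhoesc(s_p) = 0$ and $\rhoesc(0) \ge \delta s_p$, so that the escape point in the travelling frame traverses the macroscopic interval $[0, \delta s_p]$; on this interval the weight $\chi(\rho, s) = \exp(c\rho)$ restricted to $\iMain(s)$ is bounded below by $1$. A direct analogue of \cref{lem:dissip_no_tf_vel}, proved by the same compactness argument of \cref{lem:compact} (using $\Phi_c(0_{\rr^n}) = \emptyset$), yields a uniform positive lower bound on the $L^2$-norm of $v_s$ over a bounded window around $\rhoesc(s)$. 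Combined with a geometric argument (based on the upper bound $\rhoesc'(s) \le \cnoesc - c$ supplied by \cref{lem:inv}) showing that the set of ``good'' times $s \in [0, s_p]$ at which $\rhoesc(s)$ lies in a fixed bounded window has measure at least linear in $s_p$, this produces a lower bound of the form $\int_0^{s_p} \ddd(s)\, ds \ge \textrm{const} \cdot s_p$.

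The contradiction is then achieved by confronting this lower bound with the upper bound furnished by the relaxation scheme inequality \cref{relax_scheme_final}. Provided the sequence $t_p$ is chosen to grow linearly in $s_p$, so that both $\rInit$ and $\rhoHom(0)$ grow linearly in $s_p$, and $\initCut$ is calibrated accordingly, each of the terms on the right-hand side of \cref{relax_scheme_final} remains bounded independently of $p$: the initial and final energies $\eee(0)$ and $\eee(s_p)$ and the firewall $\fff(0)$ are controlled by the a priori bounds \cref{bound_u_ut_ck_bis}; the back-flux factor $K_{\mathcal{G},\textrm{back}} \exp(-\kappa \initCut)$ is small for $\initCut$ large; the front-flux factor $K_{\mathcal{G},\textrm{front}}(s_p) \exp((\cnoesc + 1)\initCut - \kappa \rhoHom(0))$ is neutralized by the linear growth of $\rhoHom(0)$; and the curvature term $\KECurv(\rInit, s_p, c)$ vanishes in the limit once $\rInit$ is taken large enough to dominate the factor $\exp(c(\initCut + \cCut s_p))$.

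The main obstacle lies in the simultaneous calibration of the four parameters $\rInit$, $\rhoHom(0)$, $\initCut$ and $t_p$: the delicate interplay between the exponential factors in the front-flux and curvature terms forces $t_p$ to grow at a precise rate compared to $s_p$, and the inequality $\resc(t_p + s_p) - \resc(t_p) \le (c - \delta) s_p$ must be preserved along this growth. This is the technical heart of the adaptation from the one-dimensional setting of \cite{Risler_globalBehaviour_2016}; once this calibration is settled, the remainder of the argument follows the template of the proof of Proposition \GlobalBehaviourPropMeanInvasionSpeed{} of \cite{Risler_globalBehaviour_2016}.
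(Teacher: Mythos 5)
The paper itself gives no argument here beyond a citation to the one\-/dimensional companion paper, so I can only assess your reconstruction on its own terms; as written it has a genuine gap at its quantitative core. Your lower bound $\int_0^{s_p}\ddd(s)\,ds\ge \textrm{const}\cdot s_p$ rests on the claim that the set of times $s$ in $[0,s_p]$ at which $\rhoesc(s)$ lies in a fixed bounded window has measure at least linear in $s_p$, justified by the speed bound coming from \cref{lem:inv}. But that lemma (via \cref{control_escape}) only bounds the \emph{forward} speed of the escape point: $\resc(t+s)\le\resc(t)+\cnoesc s$. There is no a priori lower bound on $\resc(t+s)-\resc(t)$ --- indeed $\underbarresc(s)\ge-\infty$ is all that is known, and obtaining such control is precisely the content of the proposition you are proving. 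In your setup the escape point moves \emph{net backward} in the frame, from $\rhoesc(0)\ge\delta s_p$ down to $\rhoesc(s_p)=0$, and nothing prevents it from sitting at $\delta s_p$ for almost all of $[0,s_p]$ and dropping to $0$ in the last instant; it then spends only $O(1)$ time in any fixed window, the weight $\chi$ at the escape point is otherwise exponentially small (or the point sits in $\iRight(s)$ where $\chi$ decays), and the dissipation collected is $O(1)$, not $O(s_p)$. A net backward excursion, however large, never forces more than a bounded amount of dissipation by this mechanism, so the contradiction does not materialize. The argument is in effect circular: it assumes the escape point cannot jump backward quickly, which is the statement to be proved.

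The parameter calibration you flag as ``the technical heart'' is also not merely delicate but infeasible as set up. Applying the relaxation scheme over a time interval of length $\sFin=s_p\to+\infty$ forces you to dominate $K_{\mathcal{G},\textrm{front}}(s_p)\exp\bigl((\cnoesc+1)\initCut\bigr)$, which grows at least like $\exp\bigl((\cnoesc+1)(\cCut+1)s_p\bigr)$, by $\exp\bigl(\kappa\,\rhoHom(0)\bigr)$. From the available information one only gets $\rhoHom(0)\ge\delta s_p+\bigl(\rHom(t_p)-\resc(t_p)\bigr)$, where the second term tends to $+\infty$ at an unknown rate (and the $t_p$ cannot be ``arranged'': they sit wherever the bad intervals occur, and $\resc(t)/t\to\cesc$ only forces $t_p/s_p\to+\infty$, which says nothing about $\rHom(t_p)-\resc(t_p)$ versus $s_p$); since $\kappa\le 1/4$ and $\delta$ is small while $(\cnoesc+1)(\cCut+1)>1$, the front\-/flux term blows up. The fix --- and, I believe, the actual structure of the argument in \cite{Risler_globalBehaviour_2016} --- is to keep the length of each application of the relaxation scheme controlled independently of the quantity sent to infinity (accumulating a fixed quantum of dissipation per forward crossing of a \emph{fixed} window, repeated infinitely often, or exploiting the subsequent catch\-/up phase where the forward speed bound genuinely applies), choosing the starting time late enough afterwards so that $\rhoHom(0)$ dominates. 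You would need to rebuild the scheme along those lines rather than over the whole bad interval $[t_p,t_p+s_p]$.
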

\begin{proof}
The proof is identical to that of \cite[\GlobalBehaviourPropInvasionSpeedFurtherControl]{Risler_globalBehaviour_2016}.
\end{proof}
\subsection{Dissipation approaches zero at regularly spaced times}
\label{subsec:dissip_zero_some_times}
For every $t$ in $[1,+\infty)$, the following set
\[
\left\{\varepsilon \text{ in } (0,+\infty) : \int_{-1/\varepsilon}^{+1/\varepsilon} 
\Bigl( u_t\bigl(\resc(t)+\rho,t\bigr)+\cesc u_r\bigl(\resc(t)+\rho,t\bigr) \Bigr)^2\, d\rho  \le \varepsilon \right\}
\]
is (according to the bounds \vref{hyp_attr_ball} for the solution) a nonempty interval (which by the way is unbounded from above). Let 
\[
\deltaDissip(t)
\] 
denote the infimum of this interval. This quantity measures to what extent the solution is, at time $t$ and around the escape point $\resc(t)$, close to be stationary in a frame travelling at the speed $\cesc$. The next goal is to prove that
\[
\deltaDissip(t) \to 0
\quad\text{as}\quad
t\to +\infty
\,.
\]
\Cref{prop:dissp_zero_some_times} below can be viewed as a first step towards this goal. 
\begin{proposition}[regular occurrence of small dissipation]
\label{prop:dissp_zero_some_times}
For every positive quantity $\varepsilon$, there exists a positive quantity $T(\varepsilon)$ such that, for every $t$ in $[0,+\infty)$, 
\[
\inf_{t'\in[t,t+T(\varepsilon)]} \deltaDissip(t') \le \varepsilon
\,.
\]
\end{proposition}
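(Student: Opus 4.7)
The proof follows the template of the analogous proposition in the one-dimensional case \cite{Risler_globalBehaviour_2016}, and is built around the ``relaxation scheme'' inequality \eqref{relax_scheme_final} applied at suitably chosen parameters.

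The plan is to argue by contradiction. Assume there exist $\varepsilon_0 > 0$, times $t_p \ge 0$, and lengths $T_p \to +\infty$ such that $\deltaDissip(t') > \varepsilon_0$ for every $t' \in [t_p, t_p + T_p]$. A short case analysis — if $(t_p)$ were bounded, one would extract a cluster point $t_\infty$ and deduce that $\deltaDissip(t') > \varepsilon_0$ for every $t' \ge t_\infty$, whence a new sequence tending to $+\infty$ can be selected — allows us to assume in addition that $t_p \to +\infty$. Then \eqref{rHom_minus_resc} together with \cref{prop:cv_mean_inv_vel,prop:further_control} guarantee that $\resc(t_p) \to +\infty$, that $\rHom(t_p) - \resc(t_p) \to +\infty$, and that the position of the escape point in the travelling frame of speed $\cesc$, namely $\rho_{\textrm{esc}}(s) := \resc(t_p + s) - \resc(t_p) - \cesc s$, is $o(s)$ uniformly over $s \in [0, T_p]$ as $p \to +\infty$.

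I would then apply \eqref{relax_scheme_final} with $\tInit := t_p$, $\rInit := \resc(t_p)$, $c := \cesc$, $\sFin := T_p$, and $\initCut$ equal to some $\initCut_p \to +\infty$ chosen slowly enough — say of order $\sqrt{\log \resc(t_p)}$, while $T_p$ is replaced, if necessary, by the minimum of the original $T_p$ and a slowly growing fraction of $\rHom(t_p) - \resc(t_p)$ — so that each of the pollution terms on the right-hand side tends to $0$: the curvature term $\KECurv$ vanishes because $T_p \exp\bigl(\cesc(\initCut_p + \cCut T_p)\bigr) = o\bigl(\resc(t_p)\bigr)$; the back-flux $\exp(-\kappa \initCut_p)$ vanishes because $\initCut_p \to +\infty$; and the front-flux vanishes because $T_p + \initCut_p = o\bigl(\rHom(t_p) - \resc(t_p)\bigr)$. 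The remaining contribution $\eee(0) - \eee(\sFin) + (\KERight/\nuFire) \fff(0)$ is bounded uniformly in $p$ by combining the a priori bounds \eqref{bound_u_ut_ck_bis} with the fact that $v(\cdot, s)$ is uniformly close to $0_{\rr^n}$ on the homogeneous interval $[\rho_{\textrm{esc}}(s), +\infty)$ (where the weight $\chi$ is the largest), so that the integrand of $\eee(s)$ is small precisely where $\chi$ is big.

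The last step is to lower-bound $\int_0^{T_p} \ddd(s) \, ds$ using the contradiction hypothesis. Unfolding the definition of $\deltaDissip$, the bound $\deltaDissip(t') > \varepsilon_0$ provides, for every $s \in [0, T_p]$, a lower bound of order $\varepsilon_0$ for the space-time integral of $(u_t + \cesc u_r)^2$ over a window of bounded size centered at the escape point. Since $\rho_{\textrm{esc}}(s) = o(s)$ and since $\chi(\rho, s) = e^{\cesc \rho}$ on $\iMain(s)$ is bounded below by a positive constant on any set of bounded size around $\rho_{\textrm{esc}}(s)$ (for $p$ large), this translates into a uniform inequality $\ddd(s) \ge c_1 \varepsilon_0$ holding on a subset of $[0, T_p]$ of measure at least $T_p / 2$, and hence $\int_0^{T_p} \ddd(s) \, ds \ge c_1 \varepsilon_0 T_p / 2$. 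Letting $T_p \to +\infty$ and comparing with the $O(1)$ bound provided by \eqref{relax_scheme_final} yields the desired contradiction. The main obstacle is the simultaneous balancing of $\initCut_p$, $T_p$, $\log \resc(t_p)$, and $\rHom(t_p) - \resc(t_p)$ required to force every pollution term to vanish while still allowing $T_p \to +\infty$; and because $\rho_{\textrm{esc}}(s)$ is only known to be $o(s)$ rather than bounded, some slack is needed in the lower bound on $\chi$ along the escape trajectory. The modified definition of $\deltaDissip$ — time-averaged over a window of fixed length — bypasses the derivative estimate on $\ddd(s)$ that was available in the one-dimensional setting.
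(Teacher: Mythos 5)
Your overall template --- argue by contradiction, apply the relaxation inequality \cref{relax_scheme_final} with $c=\cesc$ and $\rInit=\resc(t_p)$ on a bad interval, make the pollution terms vanish, and compare a linear-in-$\sFin$ lower bound on $\int_0^{\sFin}\ddd(s)\,ds$ with an $O(1)$ upper bound --- is the right skeleton, and your balancing of the pollution terms (including the observation that the curvature term forces $\sFin=O(\log\resc(t_p))$) is coherent. But the two quantitative claims on which the contradiction actually rests are both unjustified, and they fail for the same reason: after \cref{prop:cv_mean_inv_vel,prop:further_control} the only available control on the escape point in the $\cesc$-frame is $\rhoesc(s)=\resc(\tInit+s)-\resc(\tInit)-\cesc s=o(s)$, not $\rhoesc(s)=O(1)$.

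First, the claim that $\eee(0)-\eee(\sFin)$ is bounded uniformly in $p$. Your justification only covers the region to the right of the escape point, where $\abs{v}\le\dEsc$ and the integrand of $\eee$ is nonnegative. To the left of $\rhoesc(\sFin)$ the integrand can be as negative as $\min V<0$, and the weight $\chi=e^{\cesc\rho}$ there integrates to $e^{\cesc\,\rhoesc(\sFin)}/\cesc$. Since $\rhoesc(\sFin)$ is only known to be $o(\sFin)$ (it may grow like $\sFin/\log\sFin$, say), the resulting lower bound $\eee(\sFin)\ge-C\,e^{\cesc\, o(\sFin)}$ can dominate the linear term $\varepsilon_0\sFin$ you are trying to beat; no admissible choice of $\eta$ in the estimate $\rhoesc(s)\le\eta s$ for $s\ge S_\eta$ repairs this, because $S_\eta$ may blow up arbitrarily fast as $\eta\to0$. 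Second, the claim that $\ddd(s)\ge c_1\varepsilon_0$ on a set of measure $\sFin/2$: on the window of size $O(1/\varepsilon_0)$ around the escape point the weight is $e^{\cesc\rho}$ with $\rho\approx\rhoesc(s)$, and $\rhoesc(s)$ may drift to $-\infty$ like $-o(s)$, so this weight is not bounded below by a positive constant uniformly over $s\in[0,\sFin]$; the ``slack'' you invoke cannot absorb an exponentially degenerating factor. Curing these two defects --- for instance by re-anchoring the frame at the running maximum of $s\mapsto\resc(\tInit+s)-\cesc s$ (which restores the energy lower bound but worsens the dissipation bound) and then showing that the escape point spends a positive density of time within bounded distance of that maximum --- is the actual substance of the proof; as written, your argument assumes it away.
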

\begin{proof}
The proof is identical to that of \cite[\GlobalBehaviourPropRegularSmallDissipation]{Risler_globalBehaviour_2016}.
\end{proof}
\subsection{Relaxation}
\label{subsec:relax}
\begin{proposition}[relaxation]
\label{prop:dissip_app_zero}   
The following assertion holds:
\[
\deltaDissip(t)\to0\quad\text{as}\quad
 t\to+\infty \,. 
\]
\end{proposition}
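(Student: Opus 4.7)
The plan is to argue by contradiction, following the strategy of the corresponding statement in \cite{Risler_globalBehaviour_2016}. Assume that there exist a positive quantity $\varepsilon_0$ and a sequence $(t_p)_{p\in\nn}$ with $t_p\to+\infty$ such that $\deltaDissip(t_p)\ge\varepsilon_0$ for every $p$. By \cref{prop:dissp_zero_some_times} applied with $\varepsilon=\varepsilon_0/2$, there exist, for every $p$ sufficiently large, times $\sigma_p^-$ in $[t_p-T(\varepsilon_0/2)-1,\,t_p-1]$ and $\sigma_p^+$ in $[t_p+1,\,t_p+T(\varepsilon_0/2)+1]$ at which $\deltaDissip(\sigma_p^\pm)\le\varepsilon_0/2$. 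In particular $\sigma_p^+-\sigma_p^-$ is bounded by a constant $S_0$ depending only on $\varepsilon_0$, while $t_p$ stays bounded away from both endpoints. By \cref{prop:further_control} (and \hypInv) the escape point $\resc(\sigma_p^-)$ tends to $+\infty$ at mean speed $\cesc$, and by \hypHom the quantity $\rHom(\sigma_p^-)-\resc(\sigma_p^-)$ tends to $+\infty$.

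I then apply the relaxation scheme of \cref{subsec:relax_sch_tr_fr} with $\tInit=\sigma_p^-$, $\rInit=\resc(\sigma_p^-)$, $c=\cesc$, $\sFin=\sigma_p^+-\sigma_p^-\le S_0$, and with $\initCut$ a large but \emph{fixed} positive quantity. For $p$ large all the parameter hypotheses are satisfied. Examining the right-hand side of the final relaxation-scheme inequality \cref{relax_scheme_final}, I estimate each term separately. The curvature remainder $\KECurv(\rInit,\sFin,\cesc)$ tends to $0$ because $\rInit=\resc(\sigma_p^-)\to+\infty$ and $\sFin$ stays bounded. The back-flux term $K_{\mathcal{G},\textrm{back}}[u_0]\exp(-\kappa\,\initCut)$ can be made as small as desired by the initial choice of $\initCut$. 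The front-flux term carries the factor $\exp(-\kappa\,\rhoHom(0))$ with $\rhoHom(0)=\rHom(\sigma_p^-)-\resc(\sigma_p^-)\to+\infty$, so it also tends to $0$. The initial firewall $\fff(0)$ is uniformly bounded thanks to the a priori bounds \cref{bound_u_ut_ck_bis} and the exponential decay of $\psi$. Finally, using \cref{lem:compact} on the solution recentred at $\resc(\sigma_p^\pm)$ and passing to the limit along a subsequence, the smallness of $\deltaDissip(\sigma_p^\pm)$ forces both limit profiles to be solutions of $\phi''+\cesc\phi'-\nabla V(\phi)=0$; combining with the definition of the escape point and hypothesis \hypDiscFront (which makes the set of admissible normalized fronts totally disconnected) one obtains $\eee(\sFin)-\eee(0)\to 0$ along a subsequence.

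Putting all these estimates together yields
\[
\int_0^{\sFin}\ddd(s)\,ds \;\longrightarrow\; 0 \quad\mbox{as}\quad p\to+\infty.
\]
On the other hand, $\ddd(s)=\int_{\iTot(s)}\chi(\rho,s)\,v_s(\rho,s)^2\,d\rho$ with $v_s=u_t+\cesc u_r$ in the travelling frame and $\chi(\rho,s)\ge\exp(-\cesc/\varepsilon_0)$ on the spatial window $[-1/\varepsilon_0,1/\varepsilon_0]\subset\iMain(s)$ for $p$ large. Because $t_p$ lies in the interior of $[\sigma_p^-,\sigma_p^+]$ at distance at least $1$ from either endpoint, the definition of $\deltaDissip(t_p)\ge\varepsilon_0$ gives (for any $\varepsilon<\varepsilon_0$) a lower bound of order $\varepsilon_0$ on the corresponding spatio-temporal integral of $v_s^2$, hence a strictly positive lower bound on $\int_0^{\sFin}\ddd(s)\,ds$ independent of $p$. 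This contradicts the limit above, and the proposition is proved.

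The step I expect to be the main obstacle is the proof that $\eee(\sFin)-\eee(0)\to 0$: one needs to rule out a persistent drop of energy in the travelling frame between $\sigma_p^-$ and $\sigma_p^+$. This requires identifying the limit profiles at both endpoints as normalized bistable fronts in $\Phi_{\cesc,\textrm{norm}}(0_{\rr^n})$, matching the same pair of minimum points, and then invoking the total disconnectedness provided by \hypDiscFront to conclude that the two energies must coincide in the limit. All of this must be carried out while keeping track of the non-variational curvature contributions, which were the reason for the careful design of the weight functions $\chi$ and $\psi$ in \cref{subsec:relax_sch_tr_fr}.
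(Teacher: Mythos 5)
Your overall strategy --- contradiction, bracketing each bad time $t_p$ by nearby times $\sigma_p^\pm$ of small dissipation via \cref{prop:dissp_zero_some_times}, and running the relaxation scheme of \cref{subsec:relax_sch_tr_fr} on $[\sigma_p^-,\sigma_p^+]$ --- is indeed the one the paper (via its companion) follows. But there are two genuine gaps.

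First, your list of estimates does not yield $\int_0^{\sFin}\ddd(s)\,ds\to 0$. In \cref{relax_scheme_final} the firewall enters through the term $\frac{\KERight}{\nuFire}\fff(0)$, and, as you yourself note, $\fff(0)$ is only \emph{uniformly bounded}, not small: with $\tInit=\sigma_p^-$ and $\rInit=\resc(\sigma_p^-)$, the weight $\psi(\cdot,0)$ is of order $e^{c\rho}$ on $[0,\initCut]$ and the integrand $\coeffEn\bigl(\tfrac{v_\rho^2}{2}+V(v)\bigr)+\tfrac{v^2}{2}$ is merely $O(1)$ there (one only knows $\abs{v}\le\dEsc$, and $v_\rho$ is just bounded), so $\fff(0)$ is bounded below by a fixed positive constant in general. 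A bounded term plus terms tending to zero gives $\limsup_p\int_0^{\sFin}\ddd\le 2\KERight\sup_p\fff(0)/\nuFire$, which is not a contradiction with the lower bound $\int_0^{\sFin}\ddd\ge\delta(\varepsilon_0)>0$. To make the firewall contribution over the window of interest actually small one must exploit the \emph{differential} inequality \cref{ds_fire} of \cref{lem:decrease_firewall}: initialize the travelling frame at a time $\tInit=\sigma_p^--\Delta_p$ well before $\sigma_p^-$ (with $\Delta_p\to+\infty$ slowly, so that the curvature and flux remainders stay under control), so that $\fff(s)\le\fff(0)e^{-\nuFire s}+\mbox{(small)}$ has relaxed by the time $s$ reaches the interval corresponding to $[\sigma_p^-,\sigma_p^+]$, and write the energy--dissipation balance only over that sub-interval. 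As written, your scheme is initialized exactly at $\sigma_p^-$ and this mechanism is lost.

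Second, the step you flag as ``the main obstacle'', namely $\eee(\sFin)-\eee(0)\to0$, is asserted rather than proved, and it is the heart of the proof. Convergence of the recentred profiles on compact sets (from \cref{lem:compact} and the smallness of $\deltaDissip(\sigma_p^\pm)$) does \emph{not} by itself imply convergence of the localized energies $\eee$: the weight $\chi$ lives on all of $\iTot(s)$, so one must separately control the contribution far to the left of the escape point (via the exponential decay of $e^{c\rho}$ and the a priori bounds), the contribution between the compact window and the cutoff point $\initCut+\cCut s$ (via the firewall again, since $\abs{v}\le\dEsc$ there), and only then identify the limit of $\eee$ with a quantity attached to the limiting normalized front. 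After that, one still needs an argument --- combining the total disconnectedness from \hypDiscFront with the fact that the energy is almost non-increasing along the sequence of small-dissipation times --- to conclude that this quantity stabilizes, so that consecutive values $\eee(0)$ and $\eee(\sFin)$ have the same limit. None of this is sketched in your proposal; without it the contradiction does not close.
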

\begin{proof}
The proof is identical to that of \cite[\GlobalBehaviourPropRelaxation]{Risler_globalBehaviour_2016}.
\end{proof}
\subsection{Convergence}
\label{subsec:convergence}
The end of the proof of \vref{prop:inv_cv} (``invasion implies convergence'') is a straightforward consequence of \cref{prop:dissip_app_zero}, and is identical to the case of space dimension one treated in \cite[\GlobalBehaviourSubSecConvergence{} and \GlobalBehaviourSubSecHomPoint]{Risler_globalBehaviour_2016}. As mentioned above, the definition of the quantity $\deltaDissip(t)$ is slightly different from that of \cite{Risler_globalBehaviour_2016}, however since this quantity goes to $0$ as time goes to $+\infty$, limits of the profiles of the solution around the escape point $\resc(t)$ must (still with this new definition of $\deltaDissip(t)$) necessarily be solutions of system \cref{syst_trav_front} satisfied by the profiles of travelling fronts. Thus all arguments remain the same, and details will not be reproduced here. \Cref{prop:inv_cv} is proved. 
\section{No invasion implies relaxation}
\label{sec:non_invasion_implies_relaxation}
\subsection{Definitions and hypotheses}
\label{subsec:def_hyp_dichot}
As everywhere else, let us consider a function $V$ in $\ccc^2(\rr^{\dState},\rr)$ satisfying the coercivity hypothesis \cref{hyp_coerc}. As in \cref{sec:inv_impl_cv}, let us consider a point $m$ in $\mmm$ and a solution $(r,t)\mapsto u(r,t)$ of system \cref{syst_rad_sym}, and let us make the following hypothesis (which is identical to the one made in \cref{subsec:inv_cv_def_hyp}):
\begin{description}
\item[\hypHomLabel]\hypertarget{hypHomBis} There exists a positive quantity $\cHom$ and a $\ccc^1$-function 
\[
\rHom : [0,+\infty)\to \rr\,,
\quad\text{satisfying}\quad
\rHom'(t) \to \cHom
\quad\text{as}\quad
t\to + \infty
\,,
\]
such that, for every positive quantity $L$, 
\[
\sup_{\rho\in[-L,L]}\abs{u\bigl( \rHom(t) + \rho, t\bigr) - m}  \to 0
\quad\text{as}\quad
t\to + \infty
\,.
\]
\end{description}
Let us define the function $t\mapsto \rEsc(t)$ and the quantity $\cEsc$ exactly as in \cref{subsec:inv_cv_def_hyp}. By contrast with \cref{subsec:inv_cv_def_hyp} and the hypothesis \textup{(\hyperlink{hypInv}{\hypInvRef})} made there, let us introduce the following (converse) hypothesis. 
\begin{description}
\item[\hypNoInvLabel]\hypertarget{hypNoInv} The quantity $\cEsc$ is nonpositive. 
\end{description}
\subsection{Statement and proof}
\label{subsec:statement_dichot}
\begin{proposition}[no invasion implies relaxation]
\label{prop:no_invasion_implies_relaxation}
Assume that $V$ satisfies the coercivity hypothesis \cref{hyp_coerc} and, keeping the definitions and notation above, let us assume that the solution under consideration satisfies hypotheses \textup{(\hyperlink{hypHomBis}{\hypHomRef})} and \textup{(\hyperlink{hypNoInv}{\hypNoInvRef})}. Then the following conclusions hold. 
\begin{enumerate}
\item There exists a nonnegative quantity $\eeeResAsympt[u]$ (``residual asymptotic energy'') such that, for every quantity $c$ in the interval $(0,\cHom)$,
\[
\int_0^{ct} r^{d-1} \left(\frac{1}{2} u_r(r,t)^2 + V\bigl(u(r,t)\bigr) - V(m)\right)\, dr \to \eeeResAsympt[u]
\quad\text{as}\quad 
t\to+\infty
\,.
\]
\label{item:prop_no_invasion_implies_relaxation_asympt_energy}
\item The quantity
\[
\sup_{r\in[0,\rHom(t)]}\abs{u_t(r,t)} 
\]
goes to $0$ as time goes to $+\infty$. 
\label{item:prop_no_invasion_implies_relaxation_time_derivative}

\item For every quantity $c$ in the interval $(0,\cHom)$, the function
\[
t\mapsto \int_0^{ct} r^{d-1} u_t(r,t)^2 \, dr
\]
is integrable on a neighbourhood of $+\infty$. 
\label{item:prop_no_invasion_implies_relaxation_integrability_of_dissipation}
\end{enumerate}
\end{proposition}
\begin{proof}
\cite[\InvasionRelaxationThmUnderHypothesisHypHom]{Risler_noInvasionCaseHigherSpace_2020} states the same conclusions as \cref{prop:no_invasion_implies_relaxation} but in a broader setting, that is for solutions of system \cref{syst_higher_dim} without the hypothesis of radial symmetry. The reader is therefore referred to the proof provided in this reference. 
\end{proof}
\section{Relaxation implies convergence}
\label{sec:relax_implies_cv}
\subsection{Statement}
As everywhere else, let us consider a function $V$ in $\ccc^2(\rr^{\dState},\rr)$ satisfying the coercivity hypothesis \cref{hyp_coerc}. As in the previous \namecref{sec:non_invasion_implies_relaxation}, let us consider a point $m$ in $\mmm$ and a solution $(r,t)\mapsto u(r,t)$ of system \cref{syst_rad_sym}, and let us assume again that the same hypotheses \textup{(\hyperlink{hypHomBis}{\hypHomRef})} and \textup{(\hyperlink{hypNoInv}{\hypNoInvRef})} hold. Thus the conclusions of \cref{prop:no_invasion_implies_relaxation} hold. Let us keep all the notation introduced in the previous \namecref{sec:non_invasion_implies_relaxation}, and let us make the following additional (generic) assumption. 
\begin{description}
\item[\hypDiscStationarymLabel]\hypertarget{hypDiscStationarym} The set
\[
\bigl\{ \phi(0) : \phi\in\PhiZeroCentre(m) \bigr\}
\]
is totally discontinuous in $\rr^{\dState}$. That is, its connected components are singletons. Equivalently, the set $\PhiZeroCentre(m)$ is totally disconnected for the topology of compact convergence (uniform convergence on compact subsets of $[0,+\infty)$).
\end{description}
Note that hypothesis \textup{(\hyperlink{hypDiscStationary}{\hypDiscStationaryRef})} stated in \cref{subsubsec:add_hyp_V} is identical except that it concerns all elements of $\mmm$ instead of the single point $m$ as in \textup{(\hyperlink{hypDiscStationarym}{\hypDiscStationarymRef})}. 
The aim of this \namecref{sec:relax_implies_cv} is to prove the additional conclusion provided by the following proposition. 
\begin{proposition}[relaxation implies convergence]
\label{prop:relax_implies_cv}
The following conclusions hold.
\begin{enumerate}
\item There exists a stationary solution $\phi$ in $\PhiZeroCentre(m)$ such that
\begin{equation}
\label{conclusion_relax_implies_cv}
\sup_{r\in[0,\rHom(t)]}\abs{u(r,t)-\phi(r)}\to 0
\quad\text{as}\quad
t\to+\infty
\,.
\end{equation}
\label{item:prop_relax_implies_cv_approach_phi}
\item The residual asymptotic energy $\eeeResAsympt[u]$ of the solution is equal to the energy $\eee[\phi]$ of this stationary solution. 
\label{item:prop_relax_implies_cv_value_asympt_en}
\end{enumerate}
\end{proposition}
\subsection{Properties of the Escape radius}
Recall that, for every nonnegative time $t$, the escape radius $\rEsc(t)$ is, according to its definition (see \cref{subsec:inv_cv_def_hyp}), either equal to $-\infty$ or nonnegative. 
\subsubsection{Transversality}
\begin{lemma}[transversality at Escape radius]
\label{lem:transversality_at_rEsc}
There exists a positive time $\tEscTransv$ and a positive quantity $\epsEscTransv$ such that, for every time $t$ greater than or equal to $\tEscTransv$, if $\rEsc(t)$ is not equal to $-\infty$, then
\begin{equation}
\label{transversality_at_rEsc}
\Bigl(u\bigl(\rEsc(t),t\bigr)-m\Bigr)\cdot u_r\bigl(\rEsc(t),t\bigr) \le - \epsEscTransv
\,.
\end{equation}
\end{lemma}
\begin{proof}
Let us proceed by contradiction and assume that there exists a sequence $(t_n)_{n\in\nn}$ such that $t_n$ goes to $+\infty$ as $n$ goes to $+\infty$ and such that, for every positive integer $n$, 
\begin{equation}
\label{by_contradiction_assumption_bare_transversality}
-\infty < \rEsc(t_n) 
\quad \text{and}\quad 
\Bigl(u\bigl(\rEsc(t_n),t_n\bigr)-m\Bigr)\cdot u_r\bigl(\rEsc(t_n),t_n\bigr) \ge -\frac{1}{n}
\,. 
\end{equation}
Up to extracting a subsequence from the sequence $(t_n)_{n\in\nn}$, it may be assumed that one among the following two assertions holds:
\begin{enumerate}
\item $\rEsc(t_n)$ goes to $+\infty$ as $t$ goes to $+\infty$;
\label{item:rEsc_of_tn_goes_to_plus_infinity}
\item there exists a nonnegative (finite) quantity $\rEscInfty$ such that $\rEsc(t_n)$ goes to $\rEscInfty$ as $t$ goes to $+\infty$. 
\label{item:rEsc_of_tn_goes_to_a_finite_limit}
\end{enumerate}
If assertion \cref{item:rEsc_of_tn_goes_to_plus_infinity} holds, then, according to \cref{lem:compactness} and to conclusion \cref{item:prop_no_invasion_implies_relaxation_asympt_energy} of \cref{prop:no_invasion_implies_relaxation}, up to extracting again a subsequence from the sequence $(t_n)_{n\in\nn}$, it may be assumed that the functions $\rho\mapsto u\bigl(\rEsc(t_n)+\rho,t_n\bigr)$ converge, uniformly on every compact subset of $\rr$, towards a $\ccc^2$-function $\phi:\rr\to\rr^{\dState}$ satisfying the system
\begin{equation}
\label{syst_stationary_solutions_large_radius_limit}
\phi'' = \nabla V(\phi)
\,;
\end{equation}
If assertion \cref{item:rEsc_of_tn_goes_to_a_finite_limit} holds, then, according to \cref{lem:compactness_close_to_origin} and to conclusion \cref{item:prop_no_invasion_implies_relaxation_asympt_energy} of \cref{prop:no_invasion_implies_relaxation}, up to extracting again a subsequence from the sequence $(t_n)_{n\in\nn}$, the functions $r\mapsto u(r,t_n)$ converge, uniformly on every compact subset of the interval $[0,+\infty)$, towards a $\ccc^2$-function $\hat{\phi}$ such that the function
\[
\phi:[-\rEscInfty,+\infty)\to\rr^{\dState}\,, \quad \rho\mapsto \hat{\phi}(\rEscInfty+\rho)
\]
satisfies the system
\begin{equation}
\label{syst_stationary_solutions_domain_bounded_to_the_left}
\phi'' + \frac{d-1}{\rho+\rEscInfty}\phi' = \nabla V(\phi)
\quad\text{with the boundary condition}\quad
\phi'(-\rEscInfty) = 0
\,;
\end{equation}
In both cases, it follows from assumptions \textup{(\hyperlink{hypHomBis}{\hypHomRef})} and \textup{(\hyperlink{hypNoInv}{\hypNoInvRef})} and from the definition of $\rEsc(t)$ that 
\[
\sup_{\rho\in[0,+\infty)}\abs{\phi(\rho)}\le \dEsc(m)
\,,
\]
and it follows from assumption \cref{by_contradiction_assumption_bare_transversality} that 
\begin{equation}
\label{no_transversality_contradiction}
\bigl(\phi(0)-m\bigr)\cdot \phi'(0)\ge 0
\,.
\end{equation}
In both cases, \vref{lem:spatial_asymptotics_stat_sol_stable_at_infinity} applies to the function $\phi$, and inequality \cref{no_transversality_contradiction} conflicts conclusion \cref{item:transv_spatial_asymptotics_ss} of this lemma, a contradiction. \Cref{lem:transversality_at_rEsc} is proved.  
\end{proof}
\subsubsection{Finiteness/infiniteness of Escape radius}
\begin{corollary}
\label{cor:finiteness_infiniteness_of_Escape_radius}
One of the following two (mutually exclusive) alternatives occurs: 
\begin{enumerate}
\item for every time $t$ greater than or equal to $\tEscTransv$, the quantity $\rEsc(t)$ equals $-\infty$, 
\item (or) for every time $t$ greater than or equal to $\tEscTransv$, the quantity $\rEsc(t)$ is positive. 
\label{item:cor_finiteness_infiniteness_of_Escape_radius}
\end{enumerate}
In addition, if the second alternative occurs, then the function $t\mapsto \rEsc(t)$ is of class $\ccc^1$ on the interval $[\tEscTransv,+\infty)$ and
\begin{equation}
\label{rEscprime_of_t_goes_to_zero}
\rEsc'(t)\to 0 \quad\text{as}\quad t\to+\infty
\,.
\end{equation}
\end{corollary}
\begin{proof}
Let us introduce the function 
\[
f:[0,+\infty)\times[0,+\infty)\to\rr\,,
\quad
(r,t)\mapsto \frac{1}{2}\Bigl(\bigl(u(r,t)-m\bigr)^2 - \dEsc(m)^2\Bigr)
\,.
\]
According to the smoothness properties of the solution recalled in \cref{subsec:glob_exist}, this function $f$ is of class $\ccc^1$ on $[0,+\infty)\times(0,+\infty)$. For every $(r,t)$ in $[0,+\infty)\times[0,+\infty)$, if $\rEsc(t)$ is not equal to $-\infty$ then it is nonnegative and $f\bigl(\rEsc(t),t\bigr)$ vanishes. If in addition $t$ is greater than or equal to the (positive) quantity $\tEscTransv$ defined in \cref{lem:transversality_at_rEsc}, then, according to inequality \cref{transversality_at_rEsc},
\begin{equation}
\label{partial_r_f_non_zero}
\partial_r f \bigl(\rEsc(t),t\bigr) = \Bigl(u\bigl(\rEsc(t),t\bigr)-m\Bigr)\cdot u_r\bigl(\rEsc(t),t\bigr) \le - \epsEscTransv <0
\,.
\end{equation}
In this case, since according to the border condition in system \cref{syst_rad_sym} the quantity $u_r(0,t)$ vanishes, the quantity $\rEsc(t)$ is necessarily positive. 
Let us introduce the set
\[
\ttt = \bigl\{t\in[\tEscTransv,+\infty): \rEsc(t)>-\infty\bigr\}
\,.
\]
It follows from inequality \cref{partial_r_f_non_zero}, from the fact that $\rEsc(t)$ is positive if $t$ is in $\ttt$, and from the Implicit Function Theorem that $\ttt$ is open in $[\tEscTransv,+\infty)$. And it follows from the definition of $\rEsc(t)$ that this set is closed in $[\tEscTransv,+\infty)$. As a consequence, the set $\ttt$ is either empty or equal to $[\tEscTransv,+\infty)$, and this proves the alternative (the first assertion of the lemma). 

If $\ttt$ equals $[\tEscTransv,+\infty)$, then it again follows from the Implicit Function Theorem that $t\mapsto\rEsc(t)$ is of class $\ccc^1$, and for every time $t$ in this interval, 
\[
\rEsc'(t) = - \frac{\partial_t f\bigl(\rEsc(t),t\bigr)}{\partial_r f\bigl(\rEsc(t),t\bigr)} = - \frac{\Bigl(u\bigl(\rEsc(t),t\bigr)-m\Bigr)\cdot u_t\bigl(\rEsc(t),t\bigr)}{\Bigl(u\bigl(\rEsc(t),t\bigr)-m\Bigr)\cdot u_r\bigl(\rEsc(t),t\bigr)}
\,.
\]
According to conclusion \cref{item:prop_no_invasion_implies_relaxation_time_derivative} of \cref{prop:no_invasion_implies_relaxation}, the numerator of this expression goes to $0$ as time goes to $+\infty$, while according to inequality \cref{partial_r_f_non_zero} the absolute value of the denominator remains not smaller than $\epsEscTransv$; it follows that $\rEsc'(t)$ goes to $0$ as time goes to $+\infty$. \Cref{cor:finiteness_infiniteness_of_Escape_radius} is proved. 
\end{proof}
\subsubsection{Infiniteness alternative for the Escape radius}
\begin{lemma}[infiniteness alternative for the Escape radius]
\label{lem:approach_to_m_alternative}
Assume that the first alternative of \cref{cor:finiteness_infiniteness_of_Escape_radius} occurs (that is, $\rEsc(t)$ equals $-\infty$ for every time $t$ greater than or equal to $\tEscTransv$). Then, 
\[
\sup_{r\in[0,\rHom(t)]}\abs{u(r,t)-m}\to0
\quad\text{as}\quad
t\to+\infty
\,.
\]
\end{lemma}
\begin{proof}
Let us proceed by contradiction and assume that the converse holds. Then there exists a positive quantity $\varepsilon$ and a sequence $(t_n,r_n)_{n\in\nn}$ in $\rr^2$ such that $t_n$ goes to $+\infty$ as $t$ goes to $+\infty$ and, for every nonnegative integer $n$, 
\[
t_n\ge\tEscTransv
\quad\text{and}\quad
r_n\in \bigl[0,\rHom(t_n)\bigr]
\quad\text{and}\quad
\abs{u(r_n,t_n)-m}\ge\varepsilon
\,.
\]
Up to extracting a subsequence, it may be assumed that one among the following two assertions holds:
\begin{enumerate}
\item $r_n$ goes to $+\infty$ as $t$ goes to $+\infty$;
\label{item:assertion_rn_goes_to_infty}
\item there exists a nonnegative quantity $r_\infty$ such that $r_n$ goes to $r_\infty$ as $n$ goes to $+\infty$. 
\label{item:assertion_rn_converges}
\end{enumerate}
If assertion \cref{item:assertion_rn_goes_to_infty} holds, then, according to \cref{lem:compactness} and to conclusion \cref{item:prop_no_invasion_implies_relaxation_asympt_energy} of \cref{prop:no_invasion_implies_relaxation}, up to extracting again a subsequence, it may be assumed that the functions $\rho\mapsto u(r_n+\rho,t_n)$ converge, uniformly on every compact subset of $\rr$, towards a $\ccc^2$-function $\phi:\rr\to\rr^{\dState}$ satisfying system \cref{syst_stationary_solutions_large_radius_limit}, and satisfying 
\[
\sup_{\rho\in\rr}\abs{\phi(\rho)-m}\le\dEsc(m)
\quad\text{and}\quad
\abs{\phi(0)-m}\ge\varepsilon\,,
\quad\text{thus}\quad
\phi\not\equiv m
\,,
\]
a contradiction with conclusion \cref{item:escape_spatial_asymptotics_ss} of \cref{lem:spatial_asymptotics_stat_sol_stable_at_infinity}.

If assertion \cref{item:assertion_rn_converges} holds, then, according to \cref{lem:compactness_close_to_origin} and to conclusion \cref{item:prop_no_invasion_implies_relaxation_asympt_energy} of \cref{prop:no_invasion_implies_relaxation}, up to extracting again a subsequence, it may be assumed that the functions $r\mapsto u(r,t_n)$ converge, uniformly on every compact subset of the interval $[0,+\infty)$, towards a $\ccc^2$-function $\hat{\phi}$ such that the function 
\[
\phi:[-r_\infty,+\infty)\to\rr^{\dState}\,, \quad \rho\mapsto \hat{\phi}(r_\infty+\rho)
\]
satisfies system \cref{syst_stationary_solutions_domain_bounded_to_the_left}, and satisfies
\[
\sup_{\rho\in[-r_\infty,+\infty)}\abs{\phi(\rho)-m}\le\dEsc(m)
\quad\text{and}\quad
\abs{\phi(0)-m}\ge\varepsilon\,,
\quad\text{thus}\quad
\phi\not\equiv m
\,,
\]
again a contradiction with conclusion \cref{item:escape_spatial_asymptotics_ss} of \cref{lem:spatial_asymptotics_stat_sol_stable_at_infinity}.
\end{proof}
\subsubsection{Non divergence towards infinity in the finiteness alternative for the Escape radius}
If the first alternative of \cref{cor:finiteness_infiniteness_of_Escape_radius} occurs, then it follows from \cref{lem:approach_to_m_alternative} that the conclusion of \cref{prop:relax_implies_cv} holds. The following proposition is the main step towards completing the proof of \cref{prop:relax_implies_cv} when the second alternative of \cref{cor:finiteness_infiniteness_of_Escape_radius} occurs. 
\begin{proposition}[non divergence towards infinity for the Escape radius]
\label{prop:non_divergence_towards_infinity_for_Escape_radius}
Assume that the second alternative of \cref{cor:finiteness_infiniteness_of_Escape_radius} occurs, that is $\rEsc(t)$ is positive for all $t$ in $[\tEscTransv,+\infty)$. Then the following inequality holds:
\[
\liminf_{t\to+\infty} \rEsc(t) < +\infty
\,.
\]
\end{proposition}
\begin{proof}
\renewcommand{\qedsymbol}{}
Let us proceed by contradiction and assume that 
\begin{equation}
\label{assumption_by_contradiction_rEsc_goes_to_infinity}
\rEsc(t) \to +\infty
\quad\text{as}\quad
t\to+\infty
\,.
\end{equation}
Let $t$ be a time greater than or equal to $\tEscTransv$. Proceeding as in the proof of Pokhozhaev's identity (see for instance \cite{BerestyckiLions_existenceGroundState_1983}), the equality obtained by integrating over space the scalar product of system \cref{syst_rad_sym} by $r^d u_r$ (that is, by $r u_r$ times the factor $r^{d-1}$ induced by the Lebesgue measure on $\rr^d$) will be considered. The domain of integration will be the interval $\bigl[0,2\rEsc(t)\bigr]$. To simplify the writing, let us denote by $R$ the quantity $2\rEsc(t)$. This leads to the following three integrals:
\[
\begin{aligned}
\iii_1(t) &= \int_0^{R}r^d u_r(r,t)\cdot u_t(r,t)\,,\\
\text{and}\quad
\iii_2(t) &= - \int_0^{R}r^d u_r(r,t)\cdot\nabla V\bigl(u(r,t)\bigr)\, dr \,,\\
\text{and}\quad
\iii_3(t) &= \int_0^{R}r^d u_r(r,t)\cdot\left(\frac{d-1}{r}u_r(r,t)+u_{rr}(r,t)\right)\, dr 
\,.
\end{aligned}
\]
According to system \cref{syst_rad_sym}, 
\begin{equation}
\label{approximate_Pokhozhaev_at_time_t}
\iii_1(t) = \iii_2(t) + \iii_3(t)
\,.
\end{equation}
Let us introduce the ``normalized'' potential $V^\ddag$ defined as 
\[
V^\ddag(v) = V(v) - V(m)
\,,
\]
and let us introduce the additional notation:
\begin{equation}
\label{def_kkk_vvv_eee_ddd}
\begin{aligned}
\kkk(t) &= \int_0^{R}r^{d-1}\frac{1}{2}u_r(r,t)^2\, dr \,,
\quad&&\text{and}&\quad
\vvv(t) &= \int_0^{R} r^{d-1}V^\ddag\bigl(u(r,t)\bigr)\, dr \,,\\
\text{and}\quad
\tilde\eee(t) &= \kkk(t) + \vvv(t) \,,
\quad&&\text{and}&\quad
\tilde\ddd(t) &= \int_0^{R} r^{d-1}u_t(r,t)^2\, dr \,,
\end{aligned}
\end{equation}
and 
\[
H^\ddag(t) = \frac{1}{2} u(R,t)^2 - V^\ddag\bigl(u(R,t)\bigr)
\,.
\]
Observe that $\kkk(t)$ and $\vvv(t)$ respectively denote the \emph{kinetic part} and the \emph{potential part} of the \emph{energy} $\tilde\eee(t)$ and $\tilde\ddd(t)$ denotes its \emph{dissipation}, while the expression of $H^\ddag(t)$ is the (normalized) Hamiltonian of the system governing stationary solutions of system \cref{syst_rad_sym} in the large radius limit (in the notation $\tilde\eee(t)$ and $\tilde\ddd(t)$, the ``tilde'' is here only to avoid any confusion with the quantities $\eee(t)$ and $\ddd(t)$ introduced in \cref{subsec:relax_sch_tr_fr}). All this notation naturally leads to rephrase equality \cref{approximate_Pokhozhaev_at_time_t}, as the following calculation shows:
\[
\begin{aligned}
\iii_2(t) &= - \int_0^{R}r^d \partial_r V^\ddag\bigl(u(r,t)\bigr)\, dr \\
&= - R^d V^\ddag\bigl(u(R,t)\bigr) + \int_0^{R}\partial_r(r^d) V^\ddag\bigl(u(r,t)\bigr)\, dr  \\
&= - R^d V^\ddag\bigl(u(R,t)\bigr) + d\times \vvv(t) 
\,,
\end{aligned}
\]
and 
\[
\begin{aligned}
\iii_3(t) &= \int_0^{R}\Bigl((d-1)r^{d-1}u_r(r,t)^2 + r^d u_r(r,t)\cdot u_{rr}(r,t)\Bigr)\, dr \\
&= 2(d-1) \kkk(t)  +  \int_0^{R} r^d \partial_r \left(\frac{u_r(r,t)^2}{2}\right)\, dr \\
&= 2(d-1) \kkk(t) + R^d \frac{u_r(R,t)^2}{2} - d \kkk(t) \\
&=  (d-2)\kkk(t) + R^d \frac{u_r(R,t)^2}{2}
\,.
\end{aligned}
\]
Thus it follows from equality \cref{approximate_Pokhozhaev_at_time_t} that
\begin{equation}
\label{Pokhozhaev_identity_with_time_dependence}
d\times \tilde\eee(t) - 2 \kkk(t) + R^d H^\ddag(t) = \int_0^{R} r^d u_r(r,t)\cdot u_t(r,t)\, dr
\,.
\end{equation}
Observe that the first two terms of the left-hand side of this equality correspond to Pokhozhaev's identity \cref{Pokhozhaev}. According to Cauchy--Schwarz inequality, 
\begin{equation}
\label{Cauchy_Schwarz_inequality}
\begin{aligned}
\abs{\int_0^{R}r^d u_r(r,t)\cdot u_t(r,t)\, dr}^2 &\le 2 \kkk(t) \int_0^{R}r^{d+1} u_t(r,t)^2 \, dr \\
&\le 2 \kkk(t) \, R^2\,  \tilde\ddd(t)
\,.
\end{aligned}
\end{equation}
It follows from equality \cref{Pokhozhaev_identity_with_time_dependence} and inequality \cref{Cauchy_Schwarz_inequality} that
\[
\begin{aligned}
\tilde\ddd(t) &\ge \frac{1}{2R^2 \kkk(t)}\abs{2 \kkk(t) - d\times \tilde\eee(t) - R^d H^\ddag(t)}^2 \\
&= \frac{\kkk(t)}{R^2}\abs{1 - \frac{d\times\tilde\eee(t)}{2\kkk(t)} - \frac{R^d H^\ddag(t)}{2\kkk(t)}}^2
\,,
\end{aligned}
\]
provided that $\kkk(t)$ is positive, which is true at least for $t$ positive large enough according to \cref{lem:kkk_larger_than_rEsc_power_d_minus_one} below. Substituting the notation $R$ with its value $2\rEsc(t)$, this last inequality reads: 
\begin{equation}
\label{lower_bound_dissipation_rEsc}
\tilde\ddd(t) \ge \frac{\kkk(t)}{4\rEsc(t)^2}\abs{1 - \frac{d\times\tilde\eee(t)}{2\kkk(t)} - \frac{\bigl(2\rEsc(t)\bigr)^d H^\ddag(t)}{2\kkk(t)}}^2
\,.
\end{equation}
A contradiction will follow from this inequality and the following four lemmas. 
\end{proof}
\begin{lemma}
\label{lem:integrability_of_dissipation}
The function $t\mapsto\tilde\ddd(t)$ is integrable on the interval $[\tEscTransv,+\infty)$. 
\end{lemma}
\begin{proof}[Proof of \cref{lem:integrability_of_dissipation}]
This statement follows from conclusion \cref{item:prop_no_invasion_implies_relaxation_integrability_of_dissipation} of \cref{prop:no_invasion_implies_relaxation}.
\end{proof}
\begin{lemma}
\label{lem:kkk_larger_than_rEsc_power_d_minus_one}
The following inequality hold: 
\begin{equation}
\label{kkk_larger_than_rEsc}
\liminf_{t\to+\infty}\frac{\kkk(t)}{\rEsc(t)}>0
\,.
\end{equation}
\end{lemma}
\begin{proof}[Proof of \cref{lem:kkk_larger_than_rEsc_power_d_minus_one}]
It is sufficient to prove the following inequality, which is stronger than inequality \cref{kkk_larger_than_rEsc} since $d$ is not smaller than $2$:
\begin{equation}
\label{kkk_larger_than_rEsc_power_d_mimus_one}
\liminf_{t\to+\infty}\frac{\kkk(t)}{\rEsc(t)^{d-1}}>0
\,.
\end{equation}
To prove this stronger inequality \cref{kkk_larger_than_rEsc_power_d_mimus_one}, let us proceed by contradiction and assume that there exists a sequence $(t_n)_{n\in\nn}$ of times greater than or equal to $\tEscTransv$ and going to $+\infty$ as $n$ goes to $+\infty$, such that 
\begin{equation}
\label{kkk_over_rEsc_power_d_minus_one_goes_to_0}
\frac{\kkk(t_n)}{\rEsc(t_n)^{d-1}}\to0
\quad\text{as}\quad
t\to+\infty
\,.
\end{equation}
Observe that, for every large enough positive integer $n$, according to the assumption \cref{assumption_by_contradiction_rEsc_goes_to_infinity} and to the definition \cref{def_kkk_vvv_eee_ddd} of $\kkk(\cdot)$, 
\[
\begin{aligned}
\frac{\kkk(t_n)}{\rEsc(t_n)^{d-1}} &\ge \frac{1}{\rEsc(t_n)^{d-1}}\int_{\rEsc(t_n)-1}^{\rEsc(t_n)+1}r^{d-1}\frac{1}{2} u_r(r,t_n)^2\, dr \\
&\ge \frac{\bigl(\rEsc(t_n)-1\bigr)^{d-1}}{2\rEsc(t_n)^{d-1}}\int_{\rEsc(t_n)-1}^{\rEsc(t_n)+1} u_r(r,t_n)^2\, dr
\,,
\end{aligned}
\]
thus it follows from assumption \cref{kkk_over_rEsc_power_d_minus_one_goes_to_0} that
\[
\int_{\rEsc(t_n)-1}^{\rEsc(t_n)+1} u_r(r,t_n)^2\, dr\to0 
\quad\text{as}\quad t\to+\infty 
\,,
\]
and as a consequence, it follows from the bounds \cref{bound_u_ut_ck} on the solution that
\[
u_r\bigl(\rEsc(t_n),t_n\bigr)\to 0  
\quad\text{as}\quad t\to+\infty 
\,,
\]
a contradiction with inequality \cref{partial_r_f_non_zero}. \Cref{lem:kkk_larger_than_rEsc_power_d_minus_one} is proved. 
\end{proof}
Note that according to assumption \cref{assumption_by_contradiction_rEsc_goes_to_infinity}, it follows from this inequality \cref{kkk_larger_than_rEsc} that $\kkk(t)$ goes to $+\infty$ as $t$ goes to $+\infty$. 
\begin{lemma}
\label{lem:eee_bounded_from_above}
The following inequality holds:
\begin{equation}
\label{eee_bounded_from_above}
\limsup_{t\to+\infty}\tilde\eee(t)<+\infty
\,.
\end{equation}
\end{lemma}
\begin{proof}[Proof of \cref{lem:eee_bounded_from_above}]
For every quantity $c$ in the interval $(0,\cHom)$, the quantity $2\rEsc(t)$ is less than $ct$ as soon as $t$ is large enough positive, and in this case it follows from the definition of $\rEsc(t)$ that the integrand
\[
\frac{1}{2}u_r(r,t)^2 + V\bigl(u(r,t)\bigr) - V(m)
\]
of the energy is nonnegative for $r$ in the interval $\bigl[2\rEsc(t),ct\bigr]$. Thus inequality \cref{eee_bounded_from_above} follows from conclusion \cref{item:prop_no_invasion_implies_relaxation_asympt_energy} of \cref{prop:no_invasion_implies_relaxation}.
\end{proof}
\begin{lemma}
\label{lem:Hamiltonian_small_at_two_rEsc}
The following limit holds:
\begin{equation}
\label{Hamiltonian_small_at_two_rEsc}
\rEsc(t)^d H^\ddag(t)\to0 
\quad\text{as}\quad
t\to+\infty
\,.
\end{equation}
\end{lemma}
\begin{proof}[Proof of \cref{lem:Hamiltonian_small_at_two_rEsc}]
Let us call upon the notation $\rSmallCurv$ and $\psi_0(\cdot)$ and $\fff_0(\cdot,\cdot)$ introduced in \cref{subsubsec:def_fire_zero}. Let $t_1$ denote a time greater than or equal to $\tEscTransv$ and large enough so that, for every time $t$ greater than or equal to $t_1$, the quantity $2\rEsc(t)$ is greater than $\rSmallCurv$. For every time $t$ greater than or equal to $t_1$, let
\[
\fff_1(t) = \fff_0\bigl(2\rEsc(t),t\bigr)
\,.
\]
Then, for $t$ greater than or equal to $t_1$,
\[
\fff_1'(t) = \partial_t \fff_0\bigl(2\rEsc(t),t\bigr) + 2 \rEsc'(t)\partial_{\widebar{r}}\fff_0\bigl(2\rEsc(t),t\bigr)
\,.
\]
For every quantity $\widebar{r}$ greater than or equal to $\rSmallCurv$, according to the definitions of $\psi_0(\cdot)$ and $T_{\widebar{r}}\psi_0(\cdot)$, 
\[
\abs{\partial_{\widebar{r}} \bigl(T_{\widebar{r}}\psi_0\bigr)(r)} = \kappa_0 T_{\widebar{r}}\psi_0(r)
\]
(except at the two points $\rSmallCurv$ and $\widebar{r}$ where this partial derivative is not defined). It follows that 
\[
\abs{\partial_{\widebar{r}}\fff_0(\widebar{r},t)} \le \kappa_0 \fff_0(\widebar{r},t)
\,.
\]
As a consequence, it follows from inequality \cref{dt_fire_0} of \cref{lem:dt_fire_0} that, for every time $t$ greater than or equal to $t_1$, with the notation of \cref{lem:dt_fire_0},
\begin{equation}
\label{upper_bound_fff_prime_one}
\fff_1'(t) \le \bigl(- \nuFZero + 2\kappa_0\rEsc'(t)\bigr) \fff_1(t) + \KFZero \int_{\SigmaEscZero(t)} T_{2\rEsc(t)}\psi_0(r)\, dr
\,.
\end{equation}
Up to replacing the time $t_1$ by larger positive quantity, it may be assumed that, for every time $t$ greater than or equal to $t_1$, 
\[
2\kappa_0\rEsc'(t)\le \frac{\nuFZero}{4}
\quad\text{and}\quad
3\rEsc(t)\le\rHom(t)
\,,
\]
so that, still for $t$ greater than or equal to $t_1$, 
\[
\SigmaEscZero(t)\subset[0,\rEsc(t)]\cup[3\rEsc(t),+\infty)
\,,
\]
thus
\[
\int_{\SigmaEscZero(t)} T_{2\rEsc(t)}\psi_0(r)\, dr \le \frac{2}{\kappa_0}\exp\bigl(-\kappa_0\rEsc(t)\bigr)
\,,
\]
so that it follows from inequality \cref{upper_bound_fff_prime_one} that
\begin{equation}
\label{upper_bound_fff_prime_one_bis}
\fff_1'(t) \le -\frac{3\nuFZero}{4}\fff_1(t) + \frac{2\KFZero}{\kappa_0} \exp\bigl(-\kappa_0\rEsc(t)\bigr)
\,.
\end{equation}
For every time $t$ greater than or equal to $t_1$, let 
\[
\gggg_1(t) = \rEsc(t)^d \fff_1(t)
\,,
\]
so that
\begin{equation}
\label{expression_gggg_one_prime}
\gggg_1'(t) = \rEsc(t)^d \left(\fff_1'(t) + d \frac{\rEsc'(t)}{\rEsc(t)}\fff_1(t)\right)
\,.
\end{equation}
Up to replacing the time $t_1$ by a larger positive quantity, it may be assumed that, for $t$ greater than or equal to $t_1$, 
\[
d \frac{\rEsc'(t)}{\rEsc(t)} \le \frac{\nuFZero}{4}
\,,
\] 
so that, introducing the function $\varphi:[t_1,+\infty)\to\rr$ defined as
\[
\varphi(t) = \frac{2\KFZero}{\kappa_0} \rEsc(t)^d\exp\bigl(-\kappa_0\rEsc(t)\bigr)
\,,
\]
it follows from \cref{upper_bound_fff_prime_one_bis,expression_gggg_one_prime} that
\[
\gggg_1'(t) \le - \frac{\nuFZero}{2} \gggg_1(t) + \varphi(t)
\,,
\]
and since $\varphi(t)$ goes to $0$ as $t$ goes to $+\infty$, the same is true for $\gggg_1(t)$. Thus 
\[
\rEsc(t)^d \fff_1(t)\to 0 
\quad\text{as}\quad 
t\to+\infty
\,.
\]
Proceeding as in the proof of \cref{lem:esc_Esc}, it follows that
\[
\sup_{r\in\bigl[2\rEsc(t)-1,2\rEsc(t)+1\bigr]} \rEsc(t)^d\bigl(u(r,t)-m\bigr)^2 \to 0 
\quad\text{as}\quad 
t\to+\infty
\,,
\]
so that, according to the bounds \cref{bound_u_ut_ck} on the solution,
\[
\rEsc(t)^d \biggl(\Bigl(u\bigl(2\rEsc(t),t\bigr)-m\Bigr)^2 + u_r\bigl(2\rEsc(t),t\bigr)^2\biggr)\to 0 
\quad\text{as}\quad 
t\to+\infty
\,,
\]
and inequality \cref{Hamiltonian_small_at_two_rEsc} follows. \Cref{lem:Hamiltonian_small_at_two_rEsc} is proved. 
\end{proof}
\begin{proof}[End of the proof of \cref{prop:non_divergence_towards_infinity_for_Escape_radius}]
It follows from inequalities \cref{lower_bound_dissipation_rEsc,kkk_larger_than_rEsc,eee_bounded_from_above} and from the limit \cref{Hamiltonian_small_at_two_rEsc} that
\[
\liminf_{t\to+\infty}\frac{\rEsc(t)^2}{\kkk(t)}\ddd(t)\ge\frac{1}{4}
\,,
\]
so that, according to inequality \cref{kkk_larger_than_rEsc}, 
\[
\liminf_{t\to+\infty}\rEsc(t)\ddd(t) >0
\quad\text{thus}\quad
\liminf_{t\to+\infty}t\ddd(t) >0
\,,
\]
a contradiction with the integrability of $t\mapsto\ddd(t)$ stated in \cref{lem:integrability_of_dissipation}. \Cref{prop:non_divergence_towards_infinity_for_Escape_radius} is proved. 
\end{proof}
\subsection{Convergence}
\label{subsec:convergence_behind_fronts}
\begin{proof}[Proof of conclusion \cref{item:prop_relax_implies_cv_approach_phi} of \cref{prop:relax_implies_cv}]
\renewcommand{\qedsymbol}{}
If the first alternative of \cref{cor:finiteness_infiniteness_of_Escape_radius} occurs then the conclusion of \cref{prop:relax_implies_cv} follows from \cref{lem:approach_to_m_alternative}. Thus it remains to deal with the second alternative, that is the case where $\rEsc(t)$ is finite for $t$ greater than or equal to $\tEscTransv$. 

In this case, according to \cref{prop:non_divergence_towards_infinity_for_Escape_radius}, there exists a sequence $(t_n)_{n\in\nn}$ of positive times going to $+\infty$ such that, for every nonnegative integer $n$, the quantity $\rEsc(t_n)$ is finite (and positive according to conclusion \cref{item:cor_finiteness_infiniteness_of_Escape_radius} of \cref{cor:finiteness_infiniteness_of_Escape_radius}) and smaller than a positive quantity which does not depend on $n$. Up to extracting a subsequence, it may be assumed that there exists a nonnegative quantity $\rEscInfty$ such that $\rEsc(t_n)$ goes to $\rEscInfty$ as $t$ goes to $+\infty$, and up to extracting again a subsequence, it may be assumed, according to \cref{lem:compactness_close_to_origin} and to conclusion \cref{item:prop_no_invasion_implies_relaxation_asympt_energy} of \cref{prop:no_invasion_implies_relaxation}, that the functions $r\mapsto u(r,t_n)$ converge, uniformly on every compact subset of the interval $[0,+\infty)$, towards a $\ccc^2$-function $\phi$ satisfying system \cref{syst_rad_sym_stationary} (including the boundary condition at the left end, that is $\phi'(0)$ vanishes). In addition, according to the definition of $\rEsc(\cdot)$, the following property holds for $\phi$:
\begin{equation}
\label{properties_phi_infty}
\abs{\phi(\rEscInfty)-m} = \dEsc(m) 
\quad\text{and}\quad 
r\ge\rEscInfty \implies \abs{\phi(r)-m}\le\dEsc(m)
\,,
\end{equation}
so that, according to \cref{lem:spatial_asymptotics_stat_sol_stable_at_infinity} applied to the function $[-\rEscInfty,+\infty)\to\rr^{\dState}$, $\rho\mapsto\phi(\rEscInfty+\rho)$, 
\begin{equation}
\label{phi_infty_minus_m_smaller_than_dEsc_to_the_right_of_rEscInfty}
r>\rEscInfty \implies \abs{\phi(r)-m}<\dEsc(m)
\,,
\end{equation}
and the function $\phi$ actually belongs to the set $\PhiZeroCentre(m)$ (defined in \cref{definition_PhiZeroCentre_of_m}) of stationary solutions approaching $m$ at infinity. For every quantity $t$ greater than or equal to $\tEscTransv$, let us introduce the quantity
\[
\delta(t) = \max\bigl(\abs{u(0,t)-\phi(0)},\abs{\rEsc(t)-\rEscInfty}\bigr)
\,.
\]
According to the regularity of the solution (see \cref{subsec:glob_exist}) and to \cref{cor:finiteness_infiniteness_of_Escape_radius}, this quantity $\delta(t)$ depends continuously on $t$; and according to what precedes, 
\begin{equation}
\label{delta_goes_to_zero_as_n_goes_to_infty}
\delta(t_n)\to0
\quad\text{as}\quad
n\to+\infty
\,.
\end{equation}
The following lemma is the main step towards completing the proof of \cref{prop:relax_implies_cv}.
\end{proof}
\begin{lemma}[convergence of $\rEsc(t)$ and of the solution at $r$ equal to $0$]
\label{lem:delta_of_to_goes_to_zero}
The quantity $\delta(t)$ goes to $0$ as $t$ goes to $+\infty$. 
\end{lemma}
\begin{proof}

Let us proceed by contradiction and assume that the converse holds. Then there exists a positive quantity $\varepsilon$ such that
\begin{equation}
\label{assumption_by_contradiction_u_at_r_equal_zero_and_rEsc}
\limsup_{t\to+\infty}\delta(t) > \varepsilon
\,.
\end{equation}
According to hypothesis \textup{(\hyperlink{hypDiscStationarym}{\hypDiscStationarymRef})} and up to replacing $\varepsilon$ by a smaller positive quantity, it may be assumed that, for every function $\varphi$ in the set $\PhiZeroCentre(m)\setminus\{\phi\}$, 
\begin{equation}
\label{phi_infty_isolated_in_PhiCentre_of_m}
\abs{\varphi(0)-\phi(0)} > \varepsilon
\,.
\end{equation}
Besides, according to the limit \cref{delta_goes_to_zero_as_n_goes_to_infty}, it may be assumed, up to dropping enough terms at the beginning of the sequence $(t_n)_{n\in\nn}$, that for every nonnegative integer $n$, 
\begin{equation}
\label{delta_of_tn_smaller_than_epsilon}
\delta(t_n)<\varepsilon
\,.
\end{equation}
For every $n$ in $\nn$, it follows from assumption \cref{assumption_by_contradiction_u_at_r_equal_zero_and_rEsc} that the set
\[
\Bigl\{t\in[t_n,+\infty): \delta(t)\ge\varepsilon\Bigr\}
\]
is nonempty. Let $\tilde{t}_n$ denote the infimum of this set. It follows from inequality \cref{delta_of_tn_smaller_than_epsilon} and from the continuity of the function $t\mapsto\delta(t)$ that 
\begin{equation}
\label{delta_of_tilde_tn_equals_epsilon}
\delta(\tilde{t}_n) = \varepsilon
\,.
\end{equation}
In addition $\tilde{t}_n$ goes to $+\infty$ as $n$ goes to $+\infty$. Thus, up to extracting again a subsequence, it may be assumed that there exists a nonnegative quantity $\tilderEscInfty$ such that $\rEsc(t_n')$ goes to $\tilderEscInfty$ as $n$ goes to $+\infty$; and, up to extracting again a subsequence, according to \cref{lem:compactness_close_to_origin} and to conclusion \cref{item:prop_no_invasion_implies_relaxation_asympt_energy} of \cref{prop:no_invasion_implies_relaxation}, it may be assumed that the functions $r\mapsto u(r,\tilde{t}_n)$ converge, uniformly on every compact subset of $[0,+\infty)$, towards a $\ccc^2$-function $\tilde{\phi}$ satisfying system \cref{syst_rad_sym_stationary} (including the boundary condition at the left end of $[0,+\infty)$). In addition, according to the definition of $\rEsc(\cdot)$, the same properties as \cref{properties_phi_infty,phi_infty_minus_m_smaller_than_dEsc_to_the_right_of_rEscInfty} must hold for $\tilde{\phi}$:
\begin{equation}
\label{tilde_phi_infty_minus_m_smaller_than_dEsc_to_the_right_of_tilde_rEscInfty}
\abs{\tilde{\phi}(\tilderEscInfty)-m} = \dEsc(m) 
\quad\text{and}\quad 
r>\tilderEscInfty \implies \abs{\tilde{\phi}(r)-m}<\dEsc(m)
\,,
\end{equation}
so that, according to \cref{lem:spatial_asymptotics_stat_sol_stable_at_infinity} applied to the function $[-\tilderEscInfty,+\infty)\to\rr^{\dState}$, $\rho\mapsto\tilde{\phi}(\tilderEscInfty+\rho)$, the function $\tilde{\phi}$ must again belong to the set $\PhiZeroCentre(m)$. Then, it follows from \cref{phi_infty_isolated_in_PhiCentre_of_m,delta_of_tilde_tn_equals_epsilon} that $\tilde{\phi}$ is actually the same function as $\phi$. Thus it follows from the definition of $\delta(\cdot)$ and from \cref{delta_of_tilde_tn_equals_epsilon} that, for every large enough positive integer $n$, 
\[
\abs{\rEsc(\tilde{t}_n)-\rEscInfty} = \varepsilon
\quad\text{so that}\quad
\abs{\tilderEscInfty-\rEscInfty} = \varepsilon
\,,
\]
and so that $\tilderEscInfty$ differs from $\rEscInfty$, a contradiction with properties \cref{properties_phi_infty,phi_infty_minus_m_smaller_than_dEsc_to_the_right_of_rEscInfty,tilde_phi_infty_minus_m_smaller_than_dEsc_to_the_right_of_tilde_rEscInfty}. \Cref{lem:delta_of_to_goes_to_zero} is proved. 
\end{proof}
\begin{proof}[End of the proof of conclusion \cref{item:prop_relax_implies_cv_approach_phi} of \cref{prop:relax_implies_cv}]
It follows from \cref{lem:delta_of_to_goes_to_zero} that, for every positive quantity $L$, 
\[
\sup_{r\in[0,L]}\abs{u(r,t) - \phi(r)}\to0
\quad\text{as}\quad
t\to+\infty
\,.
\]
Since in addition $\rEsc(t)$ converges towards the finite quantity $\rEscInfty$ at $t$ goes to $+\infty$, it follows (proceeding as in the proof of \cref{lem:approach_to_m_alternative}), that the stronger limit \cref{conclusion_relax_implies_cv} actually holds. Conclusion \cref{item:prop_relax_implies_cv_approach_phi} of \cref{prop:relax_implies_cv} is proved. 
\end{proof}
\begin{proof}[Proof of conclusion \cref{item:prop_relax_implies_cv_value_asympt_en} of \cref{prop:relax_implies_cv}]
\renewcommand{\qedsymbol}{}
To complete the proof of \cref{prop:relax_implies_cv}, it remains to prove that the residual asymptotic energy $\eeeResAsympt[u]$ of the solution equals $\eee[\phi]$. The arguments are similar to those of \cite[\GlobalRelaxationSubsecValueAsymptoticEnergy]{Risler_globalRelaxation_2016}, and the notation introduced below is similar to the one of this reference. 

Let us assume that the second alternative of \cref{cor:finiteness_infiniteness_of_Escape_radius} occurs, and let us call upon the notation $V^\dag$ introduced in \cref{def_Vdag_udag} and the notation $E^\dag(r,t)$ and $F^\dag(r,t)$ introduced in \cref{def_Edag_Fdag}. For every nonnegative quantity $r$, let us introduce the quantity $E_\phi(r)$ defined as
\[
E_\phi(r) = \frac{1}{2}\phi'(r)^2 + V^\ddag\bigl(\phi(r)\bigr) = \frac{1}{2}\phi'(r)^2 + V^\dag\bigl(\phi(r)-m\bigr)
\,.
\]
The same construction as in \cite[\GlobalRelaxationSubsecValueAsymptoticEnergy]{Risler_globalRelaxation_2016} provides, for some time $t_0$ large enough positive, a $\ccc^1$-function $\rExt:[t_0,+\infty)\to[0,+\infty)$ such that the following limits hold as $t$ goes to $+\infty$:
\begin{equation}
\label{limits_rExt_and_rExtprime_and_energy}
\rExt(t)\to+\infty
\ \text{and}\
\rExt'(t)\to 0
\ \text{and}\
\int_0^{\rEsc(t) + \rExt(t)}r^{d-1}\bigl(E^\dag(r,t) - E_\phi(r) \bigr)\, dr\to 0
\,.
\end{equation}
Let 
\begin{equation}
\label{def_b_of_t}
b(t) = \rEsc(t) + \rExt(t)
\,,
\end{equation}
see \cref{fig:value_asymptotic_energy}. 
\begin{figure}[!htbp]
	\centering
    \includegraphics[width=.8\textwidth]{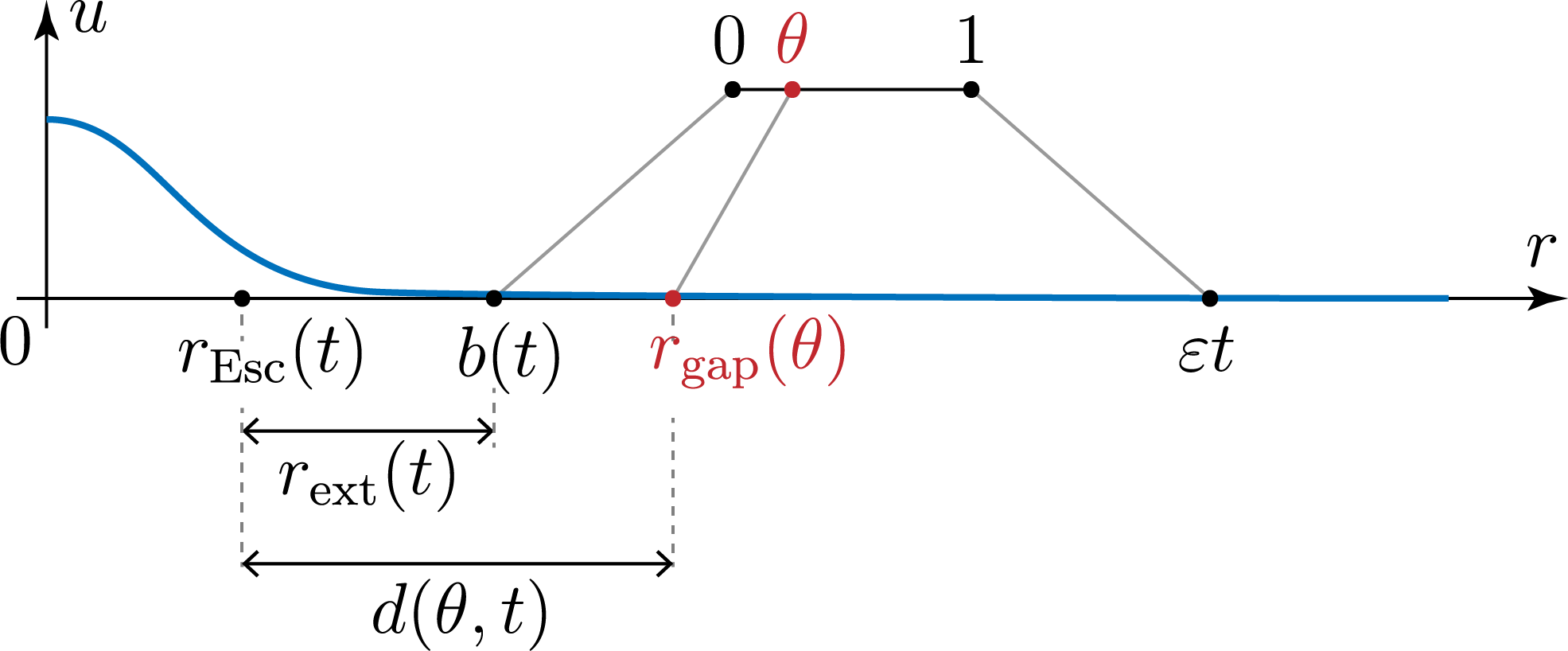}
    \caption{Illustration of the notation for the proof of conclusion \cref{item:prop_relax_implies_cv_value_asympt_en} of \cref{prop:relax_implies_cv}.}
    \label{fig:value_asymptotic_energy}
\end{figure}
Since $b(t)$ goes to $+\infty$ as $t$ goes to $+\infty$, it follows from the previous limit that 
\begin{equation}
\label{int_over_bulk_of_Edag_goes_to_E_of_phi}
\int_0^{b(t)} r^{d-1}E^\dag(r,t)\, dr \to E[\phi]
\quad\text{as}\quad
t\to+\infty
\,.
\end{equation}
Let $\varepsilon$ denote a small positive quantity to be chosen below (the value of $\varepsilon$ is provided in \cref{def_varepsilon} and depends only on $\phi$). According to conclusion \cref{item:prop_no_invasion_implies_relaxation_asympt_energy} of \cref{prop:no_invasion_implies_relaxation}, 
\begin{equation}
\label{int_up_to_eps_t_of_Edag_goes_to_eeeResAsympt}
\int_0^{\varepsilon t} r^{d-1}E^\dag(r,t)\, dr \to \eeeResAsympt[u] 
\quad\text{as}\quad
t\to+\infty
\,.
\end{equation}
As a consequence, conclusion \cref{item:prop_relax_implies_cv_value_asympt_en} of \cref{prop:relax_implies_cv} is a consequence of the following lemma. 
\end{proof}
\begin{lemma}[the energy over the interval ${[}b(t),\varepsilon t{]}$ goes to $0$]
\label{lem:energy_between_b_of_t_and_eps_t_goes_to_zero}
The following limit holds:
\begin{equation}
\label{energy_between_b_of_t_and_eps_t_goes_to_zero}
\int_{b(t)}^{\varepsilon t} r^{d-1}E^\dag(r,t)\, dr \to 0 
\quad\text{as}\quad
t\to+\infty
\,.
\end{equation}
\end{lemma}
\begin{proof}[Proof of \cref{lem:energy_between_b_of_t_and_eps_t_goes_to_zero}]
\renewcommand{\qedsymbol}{}
According to the limits \cref{rEscprime_of_t_goes_to_zero,limits_rExt_and_rExtprime_and_energy}, $b'(t)$ goes to $0$ as $t$ goes to $+\infty$. Thus there exists a time $t'_0$ greater than or equal to $t_0$ such that, for every time $t$ greater than or equal to $t'_0$, the quantity $b(t)$ is smaller than $\varepsilon t$. Let us call upon the notation $\kappa_0$ and $T_{\widebar{r}}\psi_0(r)$ and $\fff_0(\widebar{r},t)$ and $\SigmaEscZero(t)$ and $\nuFZero$ and $\KFZero$ introduced in \cref{def_kappa_zero,def_Tbarr_psi_zero,def_fffZero,def_nuZero,def_KFZero} (for the minimum point $m$ considered here), and let us introduce the functions $\rGap$ and $\gggg$, defined on $[0,1]\times[t'_0,+\infty)$ with values in $[0,+\infty)$, defined as
\begin{equation}
\label{def_rGap_and_gggg}
\rGap(\theta,t) = (1-\theta) b(t) + \theta \varepsilon t
\quad\text{and}\quad
\gggg(\theta,t) = \fff_0\bigl(\rGap(\theta,t),t\bigr)
\,,
\end{equation}
see \cref{fig:value_asymptotic_energy}. For every $(\theta,t)$ in $[0,1]\times[t_0,+\infty)$,
\[
\partial_t \gggg(\theta,t) = \partial_{\widebar{r}}\fff_0\bigl(\rGap(\theta,t),t\bigr)\partial_t \rGap(\theta,t) + \partial_t \fff_0\bigl(\rGap(\theta,t),t\bigr)
\,.
\]
According to inequality \cref{dt_fire_0} of \cref{lem:dt_fire_0}, 
\[
\begin{aligned}
\partial_t \fff_0\bigl(\rGap(\theta,t),t\bigr) &\le - \nuFZero \fff_0\bigl(\rGap(\theta,t),t\bigr) + \KFZero \int_{\SigmaEscZero(t)} T_{\rGap(\theta,t)}\psi_0(r)\, dr \\
&\le - \nuFZero \fff_0\bigl(\rGap(\theta,t),t\bigr) + \frac{2\KFZero}{\kappa_0}\exp\bigl(-\kappa_0 d(\theta,t)\bigr)
\,,
\end{aligned}
\]
where $d(\theta,t)$ denotes the distance between $\rGap(\theta,t)$ and the set $\SigmaEscZero(t)$ in $\rr$. Let us assume that $\varepsilon$ is smaller than $\cHom/2$ and, up to increasing $t'_0$, let us assume that, for every time $t$ greater than or equal to $t'_0$, the quantity $\rHom(t)$ is not smaller than $2\varepsilon t$. Then, for every time $t$ greater than or equal to $t'_0$,
\begin{equation}
\label{def_d_of_theta_t}
d(\theta,t) = \rGap(\theta,t)-\rEsc(t)
\,,
\end{equation}
see \cref{fig:value_asymptotic_energy}, and it follows from the previous inequality that
\[
\partial_t \fff_0\bigl(\rGap(\theta,t),t\bigr)\le - \nuFZero \fff_0\bigl(\rGap(\theta,t),t\bigr) + \frac{2\KFZero}{\kappa_0}\exp\bigl(-\kappa_0 d(\theta,t)\bigr)
\,.
\]
Besides, according to the definition \cref{def_Tbarr_psi_zero} of the weight function $T_{\widebar{r}}\psi_0(\cdot)$, 
\[
\abs{\partial_{\widebar{r}}\fff_0\bigl(\rGap(\theta,t),t\bigr)}\le \kappa_0 \fff_0\bigl(\rGap(\theta,t),t\bigr)
\]
It follows that, for every time $t$ greater than or equal to $t'_0$,
\[
\partial_t \gggg(\theta,t) \le - (\nuFZero - \varepsilon\kappa_0)\gggg(\theta,t) + \frac{2\KFZero}{\kappa_0}\exp\bigl(-\kappa_0 d(\theta,t)\bigr)
\,,
\]
so that if the quantity $\varepsilon$ is chosen as
\begin{equation}
\label{def_varepsilon}
\varepsilon = \min\left(\frac{\cHom}{2},\frac{\nuFZero}{16\kappa_0}\right)
\,,
\end{equation}
then the previous inequality yields
\begin{equation}
\label{upper_bound_partial_t_gggg}
\partial_t \gggg(\theta,t) \le - \frac{\nuFZero}{2}\gggg(\theta,t) + \frac{2\KFZero}{\kappa_0}\exp\bigl(-\kappa_0 d(\theta,t)\bigr)
\,.
\end{equation}
The factor $16$ in the denominator of the second quantity defining $\varepsilon$ will be useful for the next lemma, which, together with the forthcoming corollary, will complete the proof of \cref{lem:energy_between_b_of_t_and_eps_t_goes_to_zero}. 
\end{proof}
\begin{lemma}[upper bound on $\gggg(\theta,t)$ for $t$ large positive]
\label{lem:upper_bound_gggg}
There exists a time $t''_0$ greater than or equal to $t'_0$ such that, for every $\theta$ in $[0,1]$ and every time $t$ greater than or equal to $t''_0$, 
\begin{equation}
\label{upper_bound_gggg}
\gggg(\theta,t) \le \frac{8\KFZero}{\kappa_0\nuFZero} \exp\bigl(-\kappa_0 d(\theta,t)\bigr)
\,.
\end{equation}
\end{lemma}
\begin{proof}[Proof of \cref{lem:upper_bound_gggg}]
Let us introduce the function $\hhh(\cdot,\cdot)$ defined as
\begin{equation}
\label{def_hhh}
\hhh(\theta,t) = \gggg(\theta,t) - \frac{8\KFZero}{\kappa_0\nuFZero} \exp\bigl(-\kappa_0 d(\theta,t)\bigr)
\,.
\end{equation}
It follows from inequality \cref{upper_bound_partial_t_gggg} that, for every $\theta$ in $[0,1]$ and for every time $t$ greater than or equal to $t'_0$, 
\[
\begin{aligned}
\partial_t\hhh(\theta,t) &\le - \frac{\nuFZero}{2}\gggg(\theta,t)+ \frac{2\KFZero}{\kappa_0}\exp\bigl(-\kappa_0 d(\theta,t)\bigr)+ \frac{8\KFZero}{\nuFZero}\partial_td(\theta,t)\exp\bigl(-\kappa_0 d(\theta,t)\bigr) \\
&\le - \frac{\nuFZero}{2}\gggg(\theta,t)+\frac{2\KFZero}{\kappa_0}\exp\bigl(-\kappa_0 d(\theta,t)\bigr)\left(1 + \frac{4\kappa_0}{\nuFZero}\partial_td(\theta,t)\right) \\
&\le - \frac{\nuFZero}{2}\hhh(\theta,t) - \frac{2\KFZero}{\kappa_0}\exp\bigl(-\kappa_0 d(\theta,t)\bigr)\left(1 - \frac{4\kappa_0}{\nuFZero}\partial_td(\theta,t)\right)
\,.
\end{aligned}
\]
According to the definitions \cref{def_b_of_t,def_rGap_and_gggg,def_d_of_theta_t} of $b(t)$ and $\rGap(t)$ and $d(\theta,t)$,
\[
d(\theta,t) = (1-\theta) \rExt(t) + \theta\bigl(\varepsilon t - \rEsc(t)\bigr)\,,
\quad\text{so that}\quad
\partial_t d(\theta,t) = (1-\theta) \rExt'(t) + \theta\bigl(\varepsilon - \rEsc'(t)\bigr)
\,.
\]
Since $\rExt'(t)$ and $\rEsc'(t)$ go to $0$ as $t$ goes to $+\infty$, there exists a time $t'''_0$ greater than or equal to $t'_0$ such that, if $t$ is greater than or equal to $t'''_0$, then
\[
\partial_t d(\theta,t) \le 2\varepsilon\,,
\quad\text{thus}\quad
\frac{4\kappa_0}{\nuFZero}\partial_td(\theta,t) \le \frac{1}{2}
\,,
\]
and as a consequence, 
\[
\begin{aligned}
\partial_t \hhh(\theta,t) &\le - \frac{\nuFZero}{2}\hhh(\theta,t) - \frac{\KFZero}{\kappa_0}\exp\bigl(-\kappa_0 d(\theta,t)\bigr) \\
&\le - \frac{\nuFZero}{2}\hhh(\theta,t) - \frac{\KFZero}{\kappa_0}\exp\bigl(-\kappa_0 d(\theta,t'''_0)\bigr)\exp\bigl(-2\varepsilon\kappa_0(t-t'''_0)\bigr)
\,.
\end{aligned}
\]
Let us introduce the function $\jjj(\cdot,\cdot)$ defined as
\[
\jjj(\theta,t) = \hhh(\theta,t) \exp\bigl(2\varepsilon\kappa_0(t-t'''_0)\bigr)
\,.
\]
Then, if $t$ is greater than or equal to $t'''_0$, 
\[
\begin{aligned}
\partial_t\jjj(\theta,t) &\le \Biggl(\biggl(-\frac{\nuFZero}{2} + 2\varepsilon \kappa_0\biggr) \hhh(\theta,t) - \frac{\KFZero}{\kappa_0}\exp\bigl(-\kappa_0 d(\theta,t_0''')\bigr)\exp\bigl(-2 \varepsilon \kappa_0 (t-t_0''')\bigr)\Biggr) \\
& \qquad \times \exp\bigl(2\varepsilon \kappa_0 (t-t'''_0)\bigr) \\
&\le -\frac{\nuFZero}{4} \jjj(\theta,t) - \frac{\KFZero}{\kappa_0}\exp\bigl(-\kappa_0 d(\theta,t_0''')\bigr) \\
&\le -\frac{\nuFZero}{4} \jjj(\theta,t) - \frac{\KFZero}{\kappa_0}\exp\bigl(-\kappa_0 d(1,t_0''')\bigr)
\,.
\end{aligned}
\]
This last inequality shows that $\jjj(\theta,t)$ must eventually become negative (and remain negative afterwards) as time increases. More precisely, since according to the bounds \cref{bound_u_ut_ck} on the solution the quantity $\gggg(\theta,t'''_0)$ is bounded uniformly with respect to $\theta$, the same is true for the quantity $\jjj(\theta,t'''_0)$. As a consequence, there must exist a time $t''_0$ greater than or equal to $t'''_0$ such that, for every $\theta$ in $[0,1]$ and every time $t$ greater than or equal to $t''_0$, 
\[
\jjj(\theta,t) \le 0 \,,
\quad\text{so that}\quad
\hhh(\theta,t) \le 0
\,,
\]
and in view of the definition \cref{def_hhh} of $\hhh(\theta,t)$, inequality \cref{upper_bound_gggg} follows. \Cref{lem:upper_bound_gggg} is proved. 
\end{proof}
For every time $t$ greater than or equal to $t'_0$, let us write 
\[
\iii(t) = \int_{b(t)}^{\varepsilon t} \widebar{r}^{d-1} \fff_0(\widebar{r},t)\, d\widebar{r}
\,.
\]
\begin{corollary}[$\iii(t)$ goes to $0$]
\label{cor:limit_integral_firewall_over_gap}
The quantity $\iii(t)$ goes to $0$ as $t$ goes to $+\infty$. 
\end{corollary}
\begin{proof}[Proof of \cref{cor:limit_integral_firewall_over_gap}]
For every time $t$ greater than or equal to $t''_0$, it follows from inequality \cref{upper_bound_gggg} of \cref{lem:upper_bound_gggg} that, for every $\widebar{r}$ in $\bigl[b(t),\varepsilon t\bigr]$, 
\[
\fff_0(\widebar{r},t) \le \frac{8\KFZero}{\kappa_0\nuFZero} \exp\Bigl(-\kappa_0 \bigl(\widebar{r}-\rEsc(t)\bigr)\Bigr)
\]
so that
\[
\begin{aligned}
\iii(t) &\le \frac{8\KFZero}{\kappa_0\nuFZero}\exp\bigl(\kappa_0\rEsc(t)\bigr)\int_{b(t)}^{\varepsilon t} \widebar{r}^{d-1} \exp(-\kappa_0 \widebar{r})\,dr \\
&\le \frac{8\KFZero}{\kappa_0\nuFZero}\exp\bigl(\kappa_0\rEsc(t)\bigr)\frac{1}{\kappa_0^d}\int_{\kappa_0 b(t)}^{+\infty}r^{d-1} e^{-r}\, dr\\
&\le \frac{8\KFZero}{\kappa_0\nuFZero}\exp\bigl(\kappa_0\rEsc(t)\bigr)\frac{(d-1)!}{\kappa_0^d} \exp\bigl(-\kappa_0 b(t)\bigr) e_{d-1}\bigl(\kappa_0 b(t)\bigr)
\,,
\end{aligned}
\]
where $e_{d-1}(\cdot)$ denotes the exponential sum function defined as
\[
e_{d-1}(\tau) = \sum_{k=0}^{d-1}\frac{\tau^k}{k!}
\,.
\]
Since according to \cref{lem:delta_of_to_goes_to_zero} the quantity $\rEsc(t)$ converges as $t$ goes to $+\infty$, and since $b(t)$ goes to $+\infty$ as $t$ goes to $+\infty$, the intended limit follows. 
\end{proof}
\begin{proof}[End of the proof of \cref{lem:energy_between_b_of_t_and_eps_t_goes_to_zero}]
Let us assume that $t$ is positive large enough so that
\begin{equation}
\label{conditions_end_proof_energy_over_gap_goes_to_zero}
b(t)-1 \ge \rSmallCurv
\quad\text{and}\quad
b(t)+1 \le \varepsilon t - 1 
\,.
\end{equation}
Then, according to the nonnegativity of $F^\dag(r,t)$ (inequality \cref{coerc_Fdag_sf}), for every $\widebar{r}$ in $[b(t),\varepsilon t]$, 
\[
\fff_0(\widebar{r},t) \ge \int_{\widebar{r}-1}^{\widebar{r}+1}T_{\widebar{r}}\psi_0(r)F^\dag(r,t)\, dr
\,,
\]
so that, according to the definition \cref{def_psi_zero} of $\psi_0$ and the first of the conditions \cref{conditions_end_proof_energy_over_gap_goes_to_zero},
\[
\fff_0(\widebar{r},t) \ge e^{-\kappa_0}\int_{\widebar{r}-1}^{\widebar{r}+1}F^\dag(r,t)\, dr
\,,
\]
so that 
\[
\begin{aligned}
\iii(t)&\ge e^{-\kappa_0}\int_{b(t)}^{\varepsilon t} \widebar{r}^{d-1}\left(\int_{\widebar{r}-1}^{\widebar{r}+1}F^\dag(r,t)\, dr\right)\, d\widebar{r} \\
&\ge e^{-\kappa_0}\left(\frac{b(t)}{b(t)-1}\right)^{d-1} \int_{b(t)}^{\varepsilon t}\left(\int_{\widebar{r}-1}^{\widebar{r}+1}r^{d-1}F^\dag(r,t)\, dr\right)\, d\widebar{r} \\
&= e^{-\kappa_0}\left(\frac{b(t)}{b(t)-1}\right)^{d-1}\int_{b(t)}^{\varepsilon t}r^{d-1}F^\dag(r,t)\left(\int_{\max\bigl(r-1,b(t)\bigr)}^{\min\bigl(r+1,\varepsilon(t)\bigr)}\, d\widebar{r}\right)\, dr 
\,.
\end{aligned}
\]
The quantity
\[
\min\bigl(r+1,\varepsilon(t)\bigr)-\max\bigl(r-1,b(t)\bigr) = 2 + \min\bigl(r,\varepsilon(t)-1\bigr)-\max\bigl(r,b(t)+1\bigr) 
\]
is equal to 
\[
\left\{
\begin{aligned}
1 + r-b(t)\quad&\text{if}\quad b(t)+1\le r \le \varepsilon t-1\,,\\
2 \quad&\text{if}\quad b(t)+1\le r \le \varepsilon t-1\,, \\
1 + \varepsilon t - r \quad&\text{if}\quad\varepsilon t-1\le r \le \varepsilon t\,,
\end{aligned}
\right.
\]
and is therefore never less than $1$. It follows that
\begin{equation}
\label{iii_larger_than_Fdag}
\iii(t)\ge e^{-\kappa_0}\left(\frac{b(t)}{b(t)-1}\right)^{d-1}\int_{b(t)}^{\varepsilon t}r^{d-1}F^\dag(r,t)\, dr \,.
\end{equation}
On the other hand, since the interval $\bigl[b(t),\varepsilon t\bigr]$ does not intersect the set $\SigmaEscZero(t)$, the following inequalities hold:
\begin{equation}
\label{Fdag_larger_than_Edag}
\int_{b(t)}^{\varepsilon t}r^{d-1}F^\dag(r,t)\, dr\ge \int_{b(t)}^{\varepsilon t}r^{d-1}E^\dag(r,t)\, dr\ge 0
\,:
\end{equation}
The intended limit \cref{energy_between_b_of_t_and_eps_t_goes_to_zero} follows from \cref{cor:limit_integral_firewall_over_gap} and inequalities \cref{iii_larger_than_Fdag,Fdag_larger_than_Edag}. \Cref{lem:energy_between_b_of_t_and_eps_t_goes_to_zero} is proved. 
\end{proof}
\begin{proof}[End of the proof of conclusion \cref{item:prop_relax_implies_cv_value_asympt_en} of \cref{prop:relax_implies_cv}]
In view of the limits \cref{int_over_bulk_of_Edag_goes_to_E_of_phi} and \cref{int_up_to_eps_t_of_Edag_goes_to_eeeResAsympt},
conclusion \cref{item:prop_relax_implies_cv_value_asympt_en} of \cref{prop:relax_implies_cv} follows from \cref{lem:energy_between_b_of_t_and_eps_t_goes_to_zero} and is therefore proved. Since conclusion \cref{item:prop_relax_implies_cv_approach_phi} of \cref{prop:relax_implies_cv} was proved in \cref{subsec:convergence_behind_fronts}, the proof of \cref{prop:relax_implies_cv} is complete. 
\end{proof}
\section{Proofs of \texorpdfstring{\cref{thm:main,prop:resid_asympt_energy,prop:mountain_pass_existence_ground_state}}{Theorem \ref{thm:main} and Propositions \ref{prop:resid_asympt_energy} and \ref{prop:mountain_pass_existence_ground_state}}}
\begin{proof}[Proof of \cref{thm:main}]
Convergence towards the propagating terrace of bistable travelling fronts follows from \cref{prop:inv_cv}, and the convergence towards a stationary solution behind these fronts follows from \cref{prop:relax_implies_cv}. The proof is the same as that of \cite[\GlobalBehaviourThmMain]{Risler_globalBehaviour_2016} (see \GlobalBehaviourSecProofTheorem{} of this reference), thus details will not be reproduced here. 
\end{proof}
\begin{proof}[Proof of \cref{prop:resid_asympt_energy}]
This statement follows from conclusion \cref{item:prop_no_invasion_implies_relaxation_asympt_energy} of \cref{prop:no_invasion_implies_relaxation} and from \cref{prop:relax_implies_cv}. 
\end{proof}
\begin{proof}[Proof of \cref{prop:mountain_pass_existence_ground_state}]
The proof is very similar to the proof of \cite[\GlobalRelaxationCorExistenceStatSolLocMin{} and \GlobalRelaxationCorAttractorBorderBasinAtt]{Risler_globalRelaxation_2016}, see \GlobalRelaxationSubsecProofCorLocalMin{} of this reference for details. The proof relies mainly on the upper semi-continuity of the asymptotic energy which is proved (in the broader setting of system \cref{syst_higher_dim} without the radial symmetry hypothesis) in \cite{Risler_noInvasionCaseHigherSpace_2020} (see \InvasionRelaxationPropUpperSemicontAsymptEn{} of this reference). 
\end{proof}
\section{Spatial asymptotics for stationary solutions stable at the right end of space}
\begin{lemma}[spatial asymptotics for stationary solutions stable at the right end of space]
\label{lem:spatial_asymptotics_stat_sol_stable_at_infinity}
Let $m$ be a point of $\mmm$, let $\rhoLeftEnd$ be a quantity in $\{-\infty\}\cup(-\infty,0]$, let
\[
I = \left\{
\begin{aligned}
&[\rhoLeftEnd,+\infty)\quad\text{if}\quad \rhoLeftEnd\in(-\infty,0]\,, \\
&(-\infty,+\infty)\quad\text{if}\quad \rhoLeftEnd = -\infty
\,,
\end{aligned}
\right.
\]
and let $\phi:I\to\rr^{\dState}$, $\rho\mapsto\phi(\rho)$ denote a function which is a solution:
\begin{equation}
\label{syst_rad_sym_stationary_bis}
\begin{aligned}
\text{of system}\quad&\phi'' + \frac{d-1}{\rho-\rhoLeftEnd}\phi' = \nabla V(\phi)\\
\text{with the boundary condition}\quad&\phi'(\rhoLeftEnd) = 0
\quad\text{if}\quad \rhoLeftEnd\in(-\infty,0]\,, \\
\text{and of system}\quad&\phi'' = \nabla V(\phi)
\quad\text{if}\quad \rhoLeftEnd = -\infty
\,.
\end{aligned}
\end{equation}
Assume that  
\begin{equation}
\label{hyp_phi_minus_m_not_larger_than_dEsc_beyond_rZero}
\abs{\phi(\rho)-m}\le\dEsc(m) \quad\text{for every $\rho$ in $[0,+\infty)$}
\quad\text{and}\quad
\phi\not\equiv m
\,.
\end{equation}
Then the following conclusions hold. 
\begin{enumerate}
\item Both quantities $\abs{\phi(\rho)-m}$ and $\abs{\phi'(\rho)}$ go to $0$ as $\rho$ goes to $+\infty$. 
\label{item:cv_spatial_asymptotics_ss}
\item For every $\rho$ in $[0,+\infty)$, the scalar product $\bigl(\phi(\rho)-m\bigr)\cdot\phi'(\rho)$ is negative. 
\label{item:transv_spatial_asymptotics_ss}
\item For every $\rho$ in $[0,+\infty)$, the quantity $\abs{\phi(\rho)-m}$ is smaller than $\dEsc(m)$. 
\label{item:closer_spatial_asymptotics_ss}
\item The supremum $\sup_{\rho\in I}\abs{\phi(\rho)-m}$ is larger than $\dEsc(m)$. 
\label{item:escape_spatial_asymptotics_ss}
\end{enumerate}
\end{lemma}
\begin{proof}
If $\rhoLeftEnd$ equals $-\infty$, then all conclusions follow from \cite[\GlobalBehaviourLemTWApproachCriticalPoints]{Risler_globalBehaviour_2016}. Thus it may be assumed that $\rhoLeftEnd$ is in $(-\infty,0]$. Observe that the interval $[0,+\infty)$ is included in the interval $I$ where the function $\phi$ is defined. For every $\rho$ in $I$, let us introduce the quantities 
\begin{equation}
\label{def_Q_and_H}
Q(\rho)=\frac{1}{2} \bigl(\phi(\rho)-m\bigr)^2
\quad\text{and}\quad
H(\rho) = \frac{1}{2}\phi'(\rho)^2 - V\bigl(\phi(\rho)\bigr)
\,.
\end{equation}
Then, for all $\rho$ in $I$, it follows from system \cref{syst_rad_sym_stationary_bis} that, 
\[
\begin{aligned}
Q'(\rho) &= \bigl(\phi(\rho)-m\bigr)\cdot \phi'(\rho) \,,\\
\text{and}\quad
Q''(\rho) &= \phi'(\rho)^2 + \bigl(\phi(\rho)-m\bigr)\cdot \nabla V\bigl(\phi(\rho)\bigr) - \frac{d-1}{\rho-\rhoLeftEnd}\bigl(\phi(\rho)-m\bigr)\cdot \phi'(\rho)
\,,
\end{aligned}
\]
and thus, if $\rho$ is nonnegative, it follows from assumption \cref{hyp_phi_minus_m_not_larger_than_dEsc_beyond_rZero} and from inequality \cref{v_nablaV_controls_square_around_loc_min} that
\begin{equation}
\label{lower_bound_Q_second_with_scalar_product_term}
Q''(\rho)\ge \phi'(\rho)^2 + \frac{\eigVmin(m)}{2}\bigl(\phi(\rho)-m\bigr)^2 - \frac{d-1}{\rho-\rhoLeftEnd} \bigl(\phi(\rho)-m\bigr)\cdot \phi'(\rho)
\,.
\end{equation}
Let us introduce the quantity 
\[
\rho_0 = \rhoLeftEnd + (d-1)\sqrt{\frac{2}{\eigVmin(m)}}
\,.
\]
For every $\rho$ greater than or equal to $\rho_0$, 
\[
\begin{aligned}
\abs{\frac{d-1}{\rho-\rhoLeftEnd} \bigl(\phi(\rho)-m\bigr)\cdot \phi'(\rho)} &\le \frac{1}{2}\phi'(\rho)^2 + \frac{1}{2}\left(\frac{d-1}{\rho-\rhoLeftEnd}\right)^2 \bigl(\phi(\rho)-m\bigr)^2 \\
& \le \frac{1}{2}\phi'(\rho)^2 + \frac{\eigVmin(m)}{4}\bigl(\phi(\rho)-m\bigr)^2
\,,
\end{aligned}
\]
so that, for every $\rho$ greater than or equal to $\max(0,\rho_0)$, it follows from \cref{lower_bound_Q_second_with_scalar_product_term} that
\begin{equation}
\label{lower_bound_Q_second}
Q''(\rho)\ge \frac{1}{2}\phi'(\rho)^2 + \frac{\eigVmin(m)}{4}\bigl(\phi(\rho)-m\bigr)^2
\,.
\end{equation}
On the other hand, it again follows from system \cref{syst_rad_sym_stationary_bis} that, for all $\rho$ in $I$,
\begin{equation}
\label{expression_H_prime}
H'(\rho) = - \frac{d-1}{\rho-\rhoLeftEnd} \phi'(\rho)^2 
\,,
\end{equation}
thus the function $H(\cdot)$ is non-increasing and thus bounded from above on $I$, and it follows from assumption \cref{hyp_phi_minus_m_not_larger_than_dEsc_beyond_rZero} that $V\bigl(\phi(\cdot)\bigr)$ is bounded on $[0,+\infty)$. According to 
the expression \cref{def_Q_and_H} of $H(\cdot)$, it follows that $\phi'(\cdot)$ is bounded on $[0,+\infty)$, so that $Q'(\cdot)$ is bounded on $I$. Since according to inequality \cref{lower_bound_Q_second} the function $Q'(\cdot)$ is non-decreasing (and even strictly increasing since $\phi$ is not identically equal to $m$) on $\bigl[\max(0,\rho_0),+\infty\bigr)$, it follows that $Q'(\rho)$ must converge towards a finite limit as $\rho$ goes to $+\infty$; and thus it follows from inequality \cref{lower_bound_Q_second} that both functions $\phi'(\cdot)^2$ and $Q(\cdot)$ are square-integrable on $I$. Since $Q'(\cdot)$ is bounded, it follows that $Q(\rho)$ must converge towards $0$ as $\rho$ goes to $+\infty$. Thus $\phi(\rho)$ goes to $m$ as $\rho$ goes to $+\infty$, and as a consequence $V\bigl(\phi(\rho)\big)$ goes to $V(m)$ as $\rho$ goes to $+\infty$. Thus, since the function $H(\cdot)$ must converge to a finite limit at $+\infty$, it follows that $\phi'(\rho)^2$ must also go to a finite limit when $\rho$ goes to $+\infty$. Since $\phi'(\cdot)^2$ is integrable on $I$, its limit at $+\infty$ must be $0$. Assertion \cref{item:cv_spatial_asymptotics_ss} is proved. 

It follows from assertion \cref{item:cv_spatial_asymptotics_ss} that $Q'(\cdot)$ converges towards $0$ at $+\infty$, thus since this function is strictly increasing on $\bigl[\max(0,\rho_0),+\infty\bigr)$, it follows that $Q'(\rho)$ is negative for all $\rho$ in this interval. To prove assertion \cref{item:transv_spatial_asymptotics_ss}, let us proceed by contradiction and assume that $Q'(\cdot)$ takes a nonnegative value somewhere in $I$. Then $\rho_0$ must be larger than $\rhoLeftEnd$, and $Q'(\cdot)$ must takes a nonnegative value somewhere in $[\rhoLeftEnd,\rho_0)$. Let 
\[
\rho_1 = \max\{\rho\in[\rhoLeftEnd,\rho_0):Q'(\rho)\ge 0\}
\,.
\]
Then $Q'(\cdot)$ is negative on $(\rho_1,+\infty)$ and it follows from \cref{lower_bound_Q_second_with_scalar_product_term} that $Q''(\cdot)$ is positive on this interval, thus $Q'(\cdot)$ is strictly increasing on this interval, and it follows that $Q'(\rho_1)$ is negative, a contradiction with the definition of $\rho_1$. Assertion \cref{item:transv_spatial_asymptotics_ss} is proved, and assertion \cref{item:closer_spatial_asymptotics_ss} follows from assertion \cref{item:transv_spatial_asymptotics_ss}. Since according to the boundary condition \cref{syst_rad_sym_stationary_bis} the scalar product $\bigl(\phi(\rhoLeftEnd)-m\bigr)\cdot\phi'(\rhoLeftEnd)$ must be equal to $0$, it follows from assertion \cref{item:transv_spatial_asymptotics_ss} that $\rhoLeftEnd$ cannot be equal to $0$, or equivalently that assumption \cref{hyp_phi_minus_m_not_larger_than_dEsc_beyond_rZero} cannot hold up to $\rhoLeftEnd$, and this proves assertion \cref{item:escape_spatial_asymptotics_ss}. \Cref{lem:spatial_asymptotics_stat_sol_stable_at_infinity} is proved. 
\end{proof}
\subsubsection*{Acknowledgements} 
I am indebted to Thierry Gallay and Romain Joly for their help and interest through numerous fruitful discussions. 
\emergencystretch=1em
\printbibliography 
\bigskip
\mySignature
\end{document}